\documentclass[12pt]{amsart}
\usepackage{setspace}
\usepackage{amsmath,amscd, amsthm}
\usepackage{amsfonts}
\usepackage[english]{babel}
\usepackage{amssymb}
\usepackage{bbm}
\pagestyle{plain}
\setlength\parindent{0pt}
\usepackage[top=1in, bottom=1in, left=.9in, right=.9in]{geometry}
\usepackage{enumerate}
\usepackage{mathrsfs}

\usepackage{xypic}
\input xy

\newcommand{\R}{\mathbb{R}}

\newcommand{\Z}{\mathbb{Z}}

\newcommand{\h}{\mathfrak{H}}

\newcommand{\tab}{\hspace*{15pt}}

\newcommand{\xqedhere}[2]{%
  \rlap{\hbox to#1{\hfil\llap{\ensuremath{#2}}}}}

\usepackage{tikz-cd}
\usepackage{enumerate}
\usepackage{mathrsfs}
\usepackage[utf8]{inputenc}
\usepackage[T1]{fontenc}
\usepackage{xypic}
\input xy

\newtheorem{thm}{Theorem}%[section]

\newtheorem{cor}[thm]{Corollary}

\setlength{\parskip}{0.0cm}
\xyoption{all}

\title{Differential equations in automorphic forms}
\author{Kim Klinger-Logan}
\date{07.06.2018} % Activate to display a given date or no date (if empty),
         % otherwise the current date is printed 

\begin{document}

\maketitle

\begin{quote}{\sc Abstract:} Physicists such as Green, Vanhove, et al show that differential equations involving automorphic forms govern the behavior of gravitons.  One particular point of interest is solutions to $(\Delta-\lambda)u=E_{\alpha} E_{\beta}$ on an arithmetic quotient of the exceptional group $E_8$.  We establish that the existence of a solution to $(\Delta-\lambda)u=E_{\alpha}E_{\beta}$ on the simpler space $SL_2(\Z)\backslash SL_2(\R)$ for certain values of $\alpha$ and $\beta$ depends on nontrivial zeros of the Riemann zeta function $\zeta(s)$.  Further, when such a solution exists, we use spectral theory to solve $(\Delta-\lambda)u=E_{\alpha}E_{\beta}$ on $SL_2(\Z)\backslash SL_2(\R)$ and provide proof of the meromorphic continuation of the solution.  The construction of such a solution uses Arthur truncation, the Maass-Selberg formula, and automorphic Sobolev spaces.
\\ \end{quote}

\section{Introduction}

\tab In \cite{Greenetal2010}, Green, Miller, Russo and Vanhove study the low energy expansions of string theory amplitudes that generalize  the amplitudes of classical supergravity. In doing so they derive differential equations that model the behavior of the 4-loop supergraviton.  Such differential equations govern the amplitudes of closed type II superstring theory.  These differential equations involve combinations of Eisenstein series in their expression.\\

\tab The differential equations presented in \cite{Greenetal2010} are of the forms:
\begin{equation*}\begin{split}(\Delta-\lambda_s)u_w & =0\\
(\Delta-\lambda_s)u_w & =c\\
(\Delta-\lambda_s)u_w & =E_{\alpha} \\
(\Delta-\lambda_s)u_w & =E_{\alpha}\cdot E_{\beta} \end{split}\end{equation*}

on the exceptional group $E_8$ where $c$ is a constant and $E_{\alpha}$ and $E_{\beta}$ are Eisenstein series.  Solutions for the first three such equations are known.  Green, Miller, Russo and Vanhove express a version of these solutions in \cite{Greenetal2010}.  Furthermore, spectral solutions to similar equations is understood (see the work of P. Garrett \cite{Garrett2011p}, \cite{Garrett2013}).  The last equation, however, is more challenging to solve. It should be noted that in \cite{Greenetal2010},   \cite{DHoker2015}  and \cite{GMV2015}, the form of this last equation was given where $\alpha=\beta$.  \\

\tab As a precedent for solving such an equation, we will solve $(\Delta-\lambda_w)u_w=E_{\alpha}\cdot E_{\beta}$ on $\Gamma\backslash \mathfrak{H}$ where $\Gamma=SL_2(\Z)$ and $\mathfrak{H}$ is the upper half plane, $\displaystyle\Delta=y^2\left(\frac{\partial^2}{\partial x^2}+ \frac{\partial^2}{\partial y^2}\right)$ is the invariant Laplacian and $\lambda_w =w(w-1)$.   There are of course many differences in these domains but examining the simpler domain will illuminate some of the necessary techniques for analyzing solutions elsewhere.  Furthermore, this technique allows us to compute the solution to the differential equation in many cases at once.  In \cite{GMV2015} Green, Miller and Vanhove present a solution on $\Gamma\backslash \mathfrak{H}$ where  $\alpha=\beta=3/2$ and $\lambda_w=12$ and D'Hoker, Green, G\"{u}rdo\u{g}an and Vanhove give a solution for integer values of $\alpha$ and $\beta$ in \cite{DHokerGurdogen2016}.  Our solution will subsume these findings. \\

 \tab In what follows, we will solve $(\Delta-\lambda_w)u_w=E_{\alpha}\cdot E_{\beta}$ on $\Gamma\backslash \mathfrak{H}$ using spectral theory.  This involves finding a spectral expansion for  $E_{\alpha}\cdot E_{\beta}$; however, given that $E_{\alpha}\cdot E_{\beta}\notin L^2(\Gamma\backslash\mathfrak{H})$ no such expansion can be directly computed as methods for computing $L^2$-spectral expansions do not directly apply.  Thus, in order to guarantee convergence of the spectral integrals, we will subtract a linear combination of Eisenstein series from $E_{\alpha}\cdot E_{\beta}$ and compute the spectral expansion of this new function.  We will then be able to solve the differential equation in the usual way using global automorphic Sobolev spaces. The computation of the spectral expansion for this new function involves implementing tools developed by Zagier \cite{Zagier1982} and Casselman \cite{Casselman1993} related to the extending the Rankin-Selberg method for functions not of rapid decay (explanation of this phenomenon can also be found in \cite{Garrett2016}).  This method makes use of Arthur truncation and the Maass-Selberg formula. \\
 
 \tab In Section \ref{results} and \ref{limits}, we will state our main results and prove the existence and uniqueness of solutions to $(\Delta-\lambda_w)u_w=E_{\alpha}\cdot E_{\beta}$ on $\Gamma\backslash \mathfrak{H}$ for almost all values of $\alpha$ and $\beta$. In Sections \ref{cusp}, \ref{cont} and \ref{res}, we will compute the spectral expansion of this solution.  After computing an explicit form of the solution, we will meromorphically continue the solution in $w$ to the left-half plane in section \ref{mero}.  This proof relies upon the constructions involving vector-valued integrals as presented by Gelfand, Pettis, and Grothendieck.  A brief summary of these constructions is provided in the appendix (Section \ref{app}).\\
 
 \subsection{Background and Motivation}\label{background}

\tab Let $E_{\alpha}$ and $E_{\beta}$ be two Eisenstein series on $\Gamma\backslash SL_2(\R)$ for $\Gamma=SL_2(\Z)$.  Each $E_s$ can then be described as 
$$E_s(z)=\sum_{\gamma\in P\backslash \Gamma}\text{Im}(\gamma z)^{s}$$ where $P$ is the parabolic of $SL_2(\mathbb{R})$ restricted to $\Gamma$. The following result of the analytic continuation and functional equation is commonly known and its proof can be found many places including (but not limited to) Epstein's \cite{Epstein1903} and Garrett's \cite{Garrett2014} explication of Godement's \cite{Godement1966} 1966 work.

{\thm For each $z\in\h$, $s(s-1)\xi(s)\cdot E_s(z)$ has an analytic continuation to an entire function of $s$ and functional equation given by $$\xi(2s)E_s =\xi(2-2s)E_{1-s}$$ where $\xi(s)= \pi^{-s/2}\Gamma(\frac{s}{2})\zeta(s)$ is the completed Riemann zeta function.}\\

Note that we will employ the notation $\displaystyle c_s=\frac{\xi(2-2s)}{\xi(2s)}$ so that the function equation for $E_s$ is given by $E_s=c_s\cdot E_{1-s}$.\\

\tab Furthermore, it is known (and proof can be found in \cite{Garrett2014}) that the Fourier-Whittaker expansion for $E_s$  (for $s\neq 1$) is given by 
$$E_s(x+iy) = y^s+c_sy^{1-s}+\frac{1}{\pi^{-s} \Gamma(s)\zeta(2s)}\sum_{n\neq 0}\frac{\sigma_{2s-1}(|n|)}{|n|^{s-\frac{1}{2}}}\cdot\sqrt{y} \int_0^{\infty}t^{s-1/2}e^{-(t+\frac{1}{t}) \pi |n| y}\frac{dt}{t}\cdot e^{2\pi i n x}$$
$$=y^{s}+c_{s }y^{1-s}+\sum_{n\neq 0}\varphi(n,s)\cdot W_{s}(|n|y)\cdot e^{2\pi i n x}$$
where 
$$\displaystyle W_{s}(|n|y)= \sqrt{y} \int_0^{\infty}t^{s-1/2}e^{-(t+\frac{1}{t}) \pi |n| y}\frac{dt}{t}$$ is the Whittaker function -- the unique (up to scalars) solution $u$ of $\displaystyle u''-\left(\frac{\lambda_{s}}{y^2} +4\pi^2n^2\right)\cdot u = 0$ for $\lambda_s=s(s-1)$ -- 
and  $\displaystyle\varphi(n,s)= \frac{1}{\pi \Gamma(s)\zeta(2s)}\frac{\sigma_{2s-1}(|n|)}{|n|^{s-\frac{1}{2}}}$ and $\sigma_{2s-1}(|n|)$ is the sum of the $(2s-1)^{th}$ powers of positive divisors of $n$.  \\

\tab Recall that $E_s$ has a simple pole at $s=1$ so the constant term $c_PE_1^*$ for the $a_{-1}$ coefficient of the Laurent expansion $E_s$ at $s=1$ will not have the form $y^s+c_sy^{1-s}$. Instead, the Fourier-Whittaker expansion for $E_1^*$ is given by 
$$E_1^*(x+iy)=y+C-\frac{3}{\pi }\log y+\sum_{n\neq 0}\varphi(n,1)\cdot W_{1}(|n|y)\cdot e^{2\pi i n x}$$ where $C=\frac{d}{ds}\left((s-1)c_{s} \right)\Big|_{s=1} $ and $W_s$ is as above.  We will use the notation $c_PE_s$ to refer to the constant term of the Eisenstein series at $s$. \\

\tab We will later also need the Fourier-Whittaker expansion of cuspforms.  Indeed the archimedean parts of that of the Fourier-Whittaker functions for a cuspform with $\Delta$-eigenvalue $\lambda_s=s(s-1)$ are the same as the Eisenstein series (see \cite{Garrett2014}).  We then have for $f$ a cuspform on $\Gamma\backslash \h$ that 
$$f(x+iy)=\sum_{n\neq 0} c_n \cdot W_s(|n|y)\cdot e^{2\pi i n x}$$ for some constants $c_n$ with $W_s(|n|y)$ as above.\\

\tab In what follows, we solve $$(\Delta-\lambda_w)u_w=E_{\alpha}\cdot E_{\beta}$$ on $\Gamma\backslash \mathfrak{H}$ where $\Gamma=SL_2(\Z)$ and $\mathfrak{H}$ is the upper half plane, $\displaystyle\Delta=y^2\left(\frac{\partial^2}{\partial x^2}+ \frac{\partial^2}{\partial y^2}\right)$ is the invariant Laplacian and $\lambda_w =w(w-1)$. First we must write out a spectral expansion for $E_{\alpha}\cdot E_{\beta}$.  \\

\tab  For $0\leq k\in\mathbb{Z}$, the $k^{th}$-Sobolev norm on $ C_c^{\infty}(\Gamma\backslash\h)$ is given by $$|f|_k^2:=\langle(1-\Delta)^k f,f \rangle_{L^2(\Gamma\backslash \mathfrak{H})}$$ and we define the global automorphic Sobolev space $H^k(\Gamma\backslash \mathfrak{H})$ to be the completion of $C_c^{\infty}(\Gamma\backslash\h)$ with respect to $|\cdot|_k$.  Ordinarily, for $S$ in some Sobolev space $H^k(\Gamma\backslash\mathfrak{H})$ we can write
 $$S=\sum_{f\text{ cfm}} \langle S,f\rangle\cdot f+ \frac{\langle S,1\rangle\cdot 1}{\langle 1,1\rangle}+\frac{1}{4\pi i}\int_{(1/2)}\langle S,E_s\rangle\cdot E_s\,ds$$
 (see Section \ref{app}, \cite{Garrett2011sob} or \cite{DeCelles2016} for further explanation of global automorphic Sobolev spaces and \cite{Iwaniec} for the spectral expansion). The problem is that $E_{\alpha}\cdot E_{\beta}$ is not in such a Sobolev space so we cannot properly write this spectral decomposition for $S=E_{\alpha}\cdot E_{\beta}$.  The trick we use is subtraction of a finite linear combination of $E_{\alpha}$ and $E_{\beta}$ so that $$S=E_{\alpha}\cdot E_{\beta}-\sum_ic_i E_{s_i}$$ which will be in $L^2$ or even possibly in $H^{\infty}$ and we can give a decomposition for $\Delta$.\\
 
 \subsection{Results}\label{results}
 
\tab We will use the spectral relation in Section \ref{app} to solve $\displaystyle(\Delta-\lambda_w)u=E_{\alpha}\cdot E_{\beta}$ on $\Gamma\backslash SL_2(\R)$.  Also, note that the automorphic Sobolev space $H^k$ in which this solution exists is also defined in Section \ref{app}.  Furthermore, we will show that the solution we have found is unique.\\

\tab Consider the set $$\mathcal{C}:=\{ \alpha,\beta\in \mathbb{C}-\{1\} ~|~ \text{Re}(\alpha)\geq 1/2, \text{Re}(\beta)\geq1/2,  \text{Re}(\alpha+\beta)\neq 3/2, \text{Re}(\beta)\neq \pm 1/2+\text{Re}(\alpha) \}.$$  The following guarantees the existence of a unique solution to $\displaystyle (\Delta-\lambda)u=E_{\alpha}\cdot E_{\beta}\phantom{e}\text{on}\phantom{e}\Gamma\backslash\mathfrak{H}$ for all $\alpha, \beta\in \mathcal{C}$. There are a few complex values eliminated from the set $\mathcal{C}$.  We will address what happens with the solution when $\text{Re}(\alpha+\beta)= 3/2$ and $\text{Re}(\beta)= \pm 1/2+\text{Re}(\alpha)$ in Section \ref{limits}.  However, it should be noted that the reason for the exclusion of the value $1$ is that $E_s$ has a pole at $s=1$.\\

\tab   Let  $\mathcal{E}$ be the vector space consisting of finite linear combinations of Eisenstein series so that 
$$\mathcal{E}(\Gamma\backslash \mathfrak{H}):= \left\{\sum_i a_i F_{s_i}(z) ~\Big|~ a_i\in \mathbb{C}\text{ and }F_{s_i}(z)\in \left\{\mathbb{C},  E_1^*(z), E_{s_i}(z) \text{ for }  s_i\in\mathbb{C}-1\right\}  \right\}.$$
This space has an LF-space structure as locally convex colimit of finite-dimensional spaces.\\

{\thm\label{existencethm}  In $\text{Re}(w)>1/2$, for $\alpha, \beta\in \mathcal{C}$,  $\displaystyle (\Delta-\lambda)u=E_{\alpha}\cdot E_{\beta}\phantom{e}\text{on}\phantom{e}\Gamma\backslash\mathfrak{H}$ has a unique solution in $H^{-\infty}(\Gamma\backslash \mathfrak{H})\oplus\mathcal{E}(\Gamma\backslash \mathfrak{H})$ with spectral expansion which lies in $H^2(\Gamma\backslash \mathfrak{H})\oplus \mathcal{E}(\Gamma\backslash \mathfrak{H})$.}\\

\begin{proof}   The existence of the solution can be seen in the computation of the spectral expansion.  First, we will subtract a finite linear combination of Eisenstein series $E_{s_i}$ so that $$S=E_{\alpha}\cdot E_{\beta}-\sum_ic_i E_{s_i}$$ which will be in $L^2(\Gamma\backslash\h)$.

\tab If $S\in L^2(\Gamma\backslash\h)$, we can write a convergent spectral expansion
$$S=\sum_{f\text{ cfm}} \langle S,f\rangle\cdot f+ \frac{\langle S,1\rangle\cdot 1}{\langle 1,1\rangle}+\frac{1}{4\pi i}\int_{(1/2)}\langle S,E_s\rangle\cdot E_s\,ds$$ where this convergence occurs in $L^2$.  Furthermore, this expansion can be extended by isometry to all of $H^{-\infty}.$  It the follows that we can write $$E_{\alpha}\cdot E_{\beta}=\sum_ic_i E_{s_i}+ \sum_{f\text{ cfm}} \langle S,f\rangle\cdot f+ \frac{\langle S,1\rangle\cdot 1}{\langle 1,1\rangle}+\frac{1}{4\pi i}\int_{(1/2)}\langle S,E_s\rangle\cdot E_s\,ds$$ which also converges in $L^2$.
Then, given that the spectral data in the expansions above is given by eigenfunctions for $\Delta$, the solution to $\displaystyle(\Delta-\lambda_w)u=E_{\alpha}\cdot E_{\beta}$ is given by division by the corresponding eigenvalues.  

\tab It can be found in many sources such as \cite{Garrett2014} that the theory of the constant term implies that $E_{\alpha}=y^{\alpha}+c_{\alpha} y^{1-\alpha}+R_{\alpha}$ where $R_{\alpha}$ is rapidly decreasing. Thus  
$$E_{\alpha}E_{\beta}=(y^{\alpha}+c_{\alpha} y^{1-\alpha}+R_{\alpha})(y^{\beta}+c_{\beta} y^{1-\beta}+R_{\beta})=y^{\alpha+\beta}+c_{\beta} y^{1+\alpha-\beta}+c_{\alpha} y^{1-\alpha+\beta}+c_{\alpha}c_{\beta}  y^{2-\alpha-\beta}+R$$
where $R$ is rapidly decreasing since $y^{\alpha}+c_{\alpha} y^{1-\alpha}$ and $y^{\beta}+c_{\beta} y^{1-\beta}$ are of moderate growth (and rapidly decreasing times moderate growth is rapidly decreasing).
Notice that different values of $\alpha$ and $\beta$ will imply different vanishing for terms of $E_{\alpha}\cdot E_{\beta}$.  Thus in different regimes, we will be required to subtract different linear combinations of Eisenstein series as follows.  

\tab Assume that $\alpha \neq 1$ and $\beta\neq 1$ since $E_s$ has a pole at $s=1$.  Also, without loss of generality, assume that $\text{Re}(\alpha)\leq \text{Re}(\beta)$.\\

%----case I:

{\bf (I):} Suppose that $1/2 \leq \text{Re}(\alpha)< \text{Re}(\alpha)+1/2<\text{Re}(\beta)$.

$$\sum_ic_i E_{s_i}= E_{\alpha+\beta}+c_{\alpha}\cdot E_{1-\alpha+\beta} $$
%$$= y^{\alpha+\beta}+c_{\alpha+\beta} y^{1-\alpha-\beta}+R_{\alpha+\beta}+c_{\alpha}\left(y^{1-\alpha+\beta}+c_{1-\alpha+\beta} y^{1-(1-\alpha+\beta)}+R_{1-\alpha+\beta}\right)$$
$$= y^{\alpha+\beta}+c_{\alpha+\beta} y^{1-\alpha-\beta}
+c_{\alpha}y^{1-\alpha+\beta}+c_{\alpha}c_{1-\alpha+\beta} y^{\alpha-\beta}+R_{\alpha+\beta}+c_{\alpha}R_{1-\alpha+\beta}$$

Thus
$$S=E_{\alpha}\cdot E_{\beta}-\sum_ic_i E_{s_i}$$
%$$=y^{\alpha+\beta}+c_{\beta} y^{1+\alpha-\beta}+c_{\alpha} y^{1-\alpha+\beta}+c_{\alpha}c_{\beta}  y^{2-\alpha-\beta}+R$$
%$$-\left(  y^{\alpha+\beta}+c_{\alpha+\beta} y^{1-\alpha-\beta}+c_{\alpha}y^{1-\alpha+\beta}+c_{\alpha}c_{1-\alpha+\beta} y^{\alpha-\beta}+R_{\alpha+\beta}+c_{\alpha}R_{1-\alpha+\beta}\right)$$
$$= c_{\beta} y^{1+\alpha-\beta}+c_{\alpha}c_{\beta} y^{2-\alpha-\beta}-c_{\alpha+\beta} y^{1-\alpha-\beta}-c_{\alpha}c_{1-\alpha+\beta} y^{\alpha-\beta}+R-R_{\alpha+\beta}-c_{\alpha}R_{1-\alpha+\beta} $$

Since $1/2 \leq \text{Re}(\alpha)< \text{Re}(\alpha)+1/2<\text{Re}(\beta)$,  $$c_{\beta} y^{1+\alpha-\beta}+c_{\alpha}c_{\beta} y^{2-\alpha-\beta}-c_{\alpha+\beta} y^{1-\alpha-\beta}-c_{\alpha}c_{1-\alpha+\beta} y^{\alpha-\beta}\in L^2(\Gamma\backslash\h).$$\\

%----case II:

{\bf(II):} Suppose $1/2\leq\text{Re}(\alpha)\leq \text{Re}(\beta)<  \text{Re}(\alpha)+1/2$ but that $\alpha\neq\beta$.
This case yields two subcases depending on $\text{Re}(\alpha+\beta)$:

\tab {\bf (IIa)}  Suppose also that $\text{Re}(\alpha+\beta)> 3/2$.

$$\sum_ic_i E_{s_i}= E_{\alpha+\beta}+c_{\beta}\cdot E_{1+\alpha-\beta}+c_{\alpha}\cdot E_{1-\alpha+\beta} $$
%$$= y^{\alpha+\beta}+c_{\alpha+\beta} y^{1-\alpha-\beta}+R_{\alpha+\beta}+c_{\beta}\left(y^{1+\alpha-\beta}+c_{1+\alpha-\beta} y^{1-(1+\alpha-\beta)}+R_{1+\alpha-\beta}\right)$$
%$$+c_{\alpha}\left(y^{1-\alpha+\beta}+c_{1-\alpha+\beta} y^{1-(1-\alpha+\beta)}+R_{1-\alpha+\beta}\right)$$
%$$= y^{\alpha+\beta}+c_{\alpha+\beta} y^{1-\alpha-\beta}+R_{\alpha+\beta}+c_{\beta}y^{1+\alpha-\beta}+c_{\beta}c_{1+\alpha-\beta} y^{1-(1+\alpha-\beta)}+c_{\beta}R_{1+\alpha-\beta}$$
%$$+c_{\alpha}y^{1-\alpha+\beta}+c_{\alpha}c_{1-\alpha+\beta} y^{1-(1-\alpha+\beta)}+c_{\alpha}R_{1-\alpha+\beta}$$
$$= y^{\alpha+\beta}+c_{\alpha+\beta} y^{1-\alpha-\beta}+R_{\alpha+\beta}
+c_{\beta}y^{1+\alpha-\beta}+c_{\beta}c_{1+\alpha-\beta} y^{-\alpha+\beta}+c_{\beta}R_{1+\alpha-\beta}$$
$$+c_{\alpha}y^{1-\alpha+\beta}+c_{\alpha}c_{1-\alpha+\beta} y^{\alpha-\beta}+c_{\alpha}R_{1-\alpha+\beta}$$

Thus
$$S=E_{\alpha}\cdot E_{\beta}-\sum_ic_i E_{s_i}$$
%$$=y^{\alpha+\beta}+c_{\beta} y^{1+\alpha-\beta}+c_{\alpha} y^{1-\alpha+\beta}+c_{\alpha}c_{\beta}  y^{2-\alpha-\beta}+R$$
%$$-( y^{\alpha+\beta}+c_{\alpha+\beta} y^{1-\alpha-\beta}+R_{\alpha+\beta}+c_{\beta}y^{1+\alpha-\beta}+c_{\beta}c_{1+\alpha-\beta} y^{-\alpha+\beta}+c_{\beta}R_{1+\alpha-\beta}$$
%$$+c_{\alpha}y^{1-\alpha+\beta}+c_{\alpha}c_{1-\alpha+\beta} y^{\alpha-\beta}+c_{\alpha}R_{1-\alpha+\beta})$$
$$=c_{\alpha}c_{\beta}  y^{2-\alpha-\beta}-c_{\alpha+\beta} y^{1-\alpha-\beta}
-c_{\beta}c_{1+\alpha-\beta} y^{-\alpha+\beta}
-c_{\alpha}c_{1-\alpha+\beta} y^{\alpha-\beta}$$
$$+R-R_{\alpha+\beta}-c_{\beta}R_{1+\alpha-\beta}- c_{\alpha}R_{1-\alpha+\beta}$$

Since $1/2\leq\text{Re}(\alpha)\leq \text{Re}(\beta)<  \text{Re}(\alpha)+1/2$ and $\text{Re}(\alpha+\beta)> 3/2$,

$$c_{\alpha}c_{\beta}  y^{2-\alpha-\beta}-c_{\alpha+\beta} y^{1-\alpha-\beta}
-c_{\beta}c_{1+\alpha-\beta} y^{-\alpha+\beta}
-c_{\alpha}c_{1-\alpha+\beta} y^{\alpha-\beta}\in L^2(\Gamma\backslash\h).$$

\tab {\bf (IIb)} Now suppose instead that $\text{Re}(\alpha+\beta)< 3/2$.

$$\sum_ic_i E_{s_i}= E_{\alpha+\beta}+c_{\beta}\cdot E_{1+\alpha-\beta}+c_{\alpha}\cdot E_{1-\alpha+\beta} +c_{\alpha}c_{\beta}\cdot E_{2-\alpha-\beta}$$
%$$= y^{\alpha+\beta}+c_{\alpha+\beta} y^{1-\alpha-\beta}+R_{\alpha+\beta}+c_{\beta}\left(y^{1+\alpha-\beta}+c_{1+\alpha-\beta} y^{1-(1+\alpha-\beta)}+R_{1+\alpha-\beta}\right)$$
%$$+c_{\alpha}\left(y^{1-\alpha+\beta}+c_{1-\alpha+\beta} y^{1-(1-\alpha+\beta)}+R_{1-\alpha+\beta}\right)+c_{\alpha}c_{\beta}\left(y^{2-\alpha-\beta}+c_{2-\alpha-\beta} y^{1-(2-\alpha-\beta)}+R_{2-\alpha-\beta}\right)$$

%$$= y^{\alpha+\beta}+c_{\alpha+\beta} y^{1-\alpha-\beta}+R_{\alpha+\beta}+c_{\beta}y^{1+\alpha-\beta}+c_{\beta}c_{1+\alpha-\beta} y^{1-(1+\alpha-\beta)}+c_{\beta}R_{1+\alpha-\beta}$$
%$$+c_{\alpha}y^{1-\alpha+\beta}+c_{\alpha}c_{1-\alpha+\beta} y^{1-(1-\alpha+\beta)}+c_{\alpha}R_{1-\alpha+\beta}+c_{\alpha}c_{\beta}y^{2-\alpha-\beta}+c_{\alpha}c_{\beta}c_{2-\alpha-\beta} y^{1-(2-\alpha-\beta)}+c_{\alpha}c_{\beta}R_{2-\alpha-\beta}$$
$$= y^{\alpha+\beta}+c_{\alpha+\beta} y^{1-\alpha-\beta}+R_{\alpha+\beta}
+c_{\beta}y^{1+\alpha-\beta}+c_{\beta}c_{1+\alpha-\beta} y^{-\alpha+\beta}+c_{\beta}R_{1+\alpha-\beta}$$
$$+c_{\alpha}y^{1-\alpha+\beta}+c_{\alpha}c_{1-\alpha+\beta} y^{\alpha-\beta}+c_{\alpha}R_{1-\alpha+\beta}
+c_{\alpha}c_{\beta}y^{2-\alpha-\beta}+c_{\alpha}c_{\beta}c_{2-\alpha-\beta} y^{\alpha+\beta-1}+c_{\alpha}c_{\beta}R_{2-\alpha-\beta}$$

Thus
$$S=E_{\alpha}\cdot E_{\beta}-\sum_ic_i E_{s_i}$$
%$$=y^{\alpha+\beta}+c_{\beta} y^{1+\alpha-\beta}+c_{\alpha} y^{1-\alpha+\beta}+c_{\alpha}c_{\beta}  y^{2-\alpha-\beta}+R$$
%$$-( y^{\alpha+\beta}+c_{\alpha+\beta} y^{1-\alpha-\beta}+R_{\alpha+\beta}+c_{\beta}y^{1+\alpha-\beta}+c_{\beta}c_{1+\alpha-\beta} y^{-\alpha+\beta}+c_{\beta}R_{1+\alpha-\beta}$$
%$$+c_{\alpha}y^{1-\alpha+\beta}+c_{\alpha}c_{1-\alpha+\beta} y^{\alpha-\beta}+c_{\alpha}R_{1-\alpha+\beta}+c_{\alpha}c_{\beta}y^{2-\alpha-\beta}+c_{\alpha}c_{\beta}c_{2-\alpha-\beta} y^{\alpha+\beta-1}+c_{\alpha}c_{\beta}R_{2-\alpha-\beta})$$
$$=-c_{\alpha+\beta} y^{1-\alpha-\beta}
-c_{\beta}c_{1+\alpha-\beta} y^{-\alpha+\beta}-c_{\alpha}c_{1-\alpha+\beta} y^{\alpha-\beta}
-c_{\alpha}c_{\beta}c_{2-\alpha-\beta} y^{\alpha+\beta-1}$$
$$+R-R_{\alpha+\beta}-c_{\beta}R_{1+\alpha-\beta}-c_{\alpha}R_{1-\alpha+\beta}-c_{\alpha}c_{\beta}R_{2-\alpha-\beta}$$

Since  $1/2\leq\text{Re}(\alpha)\leq\text{Re}(\beta)< \text{Re}(\alpha)+1/2$ and $\text{Re}(\alpha+\beta)< 3/2$,
$$-c_{\alpha+\beta} y^{1-\alpha-\beta}
-c_{\beta}c_{1+\alpha-\beta} y^{-\alpha+\beta}-c_{\alpha}c_{1-\alpha+\beta} y^{\alpha-\beta}
-c_{\alpha}c_{\beta}c_{2-\alpha-\beta} y^{\alpha+\beta-1}\in L^2(\Gamma\backslash\h).$$\\

%----case III:

{\bf (III):} Suppose that $\alpha=\beta$. This will again yield two different cases based on $\text{Re}(\alpha)$:

\tab {\bf (IIIa)} Suppose also that $\text{Re}(\alpha)>3/4$.

$$\sum_ic_i E_{s_i}=E_{2\alpha}+2c_{\alpha}E_1^*- \frac{\pi}{3}C_{\alpha}
= y^{2\alpha}+c_{2\alpha}y^{1-2\alpha}+R_{2\alpha}+  2c_{\alpha}(y-\frac{3}{\pi }\log y+C - \frac{\pi}{3}C_{\alpha}+R_1 )$$  where $C_{\alpha}= \frac{d}{ds}c_s\Big|_{s=\alpha}$.
Thus 
$$S= (E_\alpha)^2- \sum_ic_i E_{s_i}= (E_{\alpha})^2-  E_{2\alpha}-2c_{\alpha}E_1^* + \frac{\pi}{3}C_{\alpha}$$
$$=c_{\alpha}^2y^{2-2\alpha}-c_{2\alpha}y^{1-2\alpha}+2c_{\alpha}\frac{3}{\pi }\log y+C + \frac{\pi}{3}C_{\alpha}+R $$
and so $(E_{\alpha})^2-  E_{2\alpha}-2c_{\alpha}E_1\in L^2(\Gamma\backslash\mathfrak{H})$ for $\text{Re}(\alpha)>3/4$.

\tab Note that adding the constant $ \frac{\pi}{3}C_{\alpha}$ does not affect whether $S$ is in $L^2$; however, this regime will aid computation later in the paper and arrises when taking the limit as $\beta\to\alpha$ as seen in Lemma \ref{a=b} below.

\tab {\bf (IIIb)}  Instead suppose that $1/2\leq \text{Re}(\alpha)<3/4$.

$$\sum_ic_i E_{s_i}=E_{2\alpha}+2c_{\alpha}E_1^*+c_{\alpha}^2E_{2-2\alpha}- \frac{\pi}{3}C_{\alpha}$$
$$= y^{2\alpha}+c_{2\alpha}y^{1-2\alpha}+R_{2\alpha}+  2c_{\alpha}(y-\frac{3}{\pi }\log y+C+R_1 )+c_{\alpha}^2(y^{2-2\alpha}+c_{2-2\alpha}y^{1-(2-2\alpha)}+R- \frac{\pi}{3}C_{\alpha}$$ 

Thus
$$S=(E_{\alpha})^2-\sum_ic_i E_{s_i}= (E_{\alpha})^2-  E_{2\alpha}-2c_{\alpha}E_1^*-c_{\alpha}^2E_{2-2\alpha}+\frac{\pi}{3}C_{\alpha}$$
$$=-c_{2\alpha}y^{1-2\alpha}+2c_{\alpha}\frac{3}{\pi }\log y
-c_{\alpha}^2c_{2-2\alpha}y^{2\alpha-1}+C+ \frac{\pi}{3}C_{\alpha}+R$$

so $(E_{\alpha})^2-  E_{2\alpha}-2c_{\alpha}E_1-c_{\alpha}^2E_{2-2\alpha}\in L^2(\Gamma\backslash\mathfrak{H})$ for $1/2\leq\text{Re}(\alpha)<3/4$.\\

%-------------------------------------------------------------------------------------------------------------------------------------------------
%-------------------------------------------------------------------------------------------------------------------------------------------------
%-------------------------------------------------------------------------------------------------------------------------------------------------
\tab We have shown that for each $\alpha$ and $\beta$ there is a linear combination of Eisenstein series $\sum_ic_i E_{s_i}$ so that $S=E_{\alpha}\cdot E_{\beta}-\sum_ic_i E_{s_i}\in L^2.$  We can thus write a spectral expansion for each case for $S$ and get
$$E_{\alpha}\cdot E_{\beta}=\sum_ic_i E_{s_i}+ \sum_{f\text{ cfm}} \langle S,f\rangle\cdot f+ \frac{\langle S,1\rangle\cdot 1}{\langle 1,1\rangle}+\frac{1}{4\pi i}\int_{(1/2)}\langle S,E_s\rangle\cdot E_s\,ds$$ in $L^2(\Gamma\backslash \mathfrak{H})\oplus \mathcal{E}(\Gamma\backslash \mathfrak{H})$.

 \tab To establish uniqueness, suppose that there are two solutions $u$ and $v$ to $\displaystyle(\Delta-\lambda_w)u=E_{\alpha}\cdot E_{\beta}$ in $H^2(\Gamma\backslash\h)\oplus\mathcal{E}(\Gamma\backslash \mathfrak{H})$.   Then $( \Delta-\lambda_w)(u-v)=E_{\alpha}\cdot E_{\beta}-E_{\alpha}\cdot E_{\beta}=0$.  Thus $u-v$ is a solution to the homogeneous equation $\displaystyle(\Delta-\lambda_w)(u-v)=0$ and $\lambda_w\in\mathbb{R}$ but this cannot be the case if $\text{Re}(w)>1/2$ and $\text{Im}(w)>0$. \\
\end{proof}

\tab Observe that for $\text{Re}(s)< 1/2$, the functional equation gives $E_s=c_s\cdot E_{1-s}$. Thus it is sufficient to consider the case where $\text{Re}(\alpha)\geq 1/2$ and  $\text{Re}(\beta)\geq 1/2$. Many of the other values excluded from $\mathcal{C}$ are in fact problematic as will will see in Section \ref{limits}. However, before we consider what is happening at these values, we will  give a spectral expansion for the solution $u_w$. \\

{\thm\label{main}  In $\text{Re}(w)>1/2$, for  $\alpha, \beta\in \mathcal{C}$, $\displaystyle (\Delta-\lambda)u=E_{\alpha}\cdot E_{\beta}\phantom{e}\text{on}\phantom{e}\Gamma\backslash\mathfrak{H}$ has a unique solution in $H^{-\infty}(\Gamma\backslash \mathfrak{H})\oplus\mathcal{E}(\Gamma\backslash \mathfrak{H})$ with spectral expansion which lies in $H^2(\Gamma\backslash \mathfrak{H})\oplus \mathcal{E}(\Gamma\backslash \mathfrak{H})$ and is given by
$$u_w=\sum_i \frac{c_{i} E_{s_i}}{\lambda_{s_i}-\lambda_w}-  \mathbbm{1}_{\alpha=\beta}\cdot \frac{\frac{\pi}{3}C_{\alpha}}{\lambda_{1}-\lambda_w}+\sum_{f\text{ cfm}} \frac{\Lambda(\alpha,\overline{f} \times E_{\beta})\cdot f}{\lambda_{s_f}-\lambda_w}+\frac{1}{4\pi i}\int_{(1/2)}\Lambda (\overline{s},E_{\alpha}\times E_{\beta})\cdot \frac{E_s}{\lambda_{s}-\lambda_w}\,ds$$
where $\mathbbm{1}_{\alpha=\beta}=  \left\{
     \begin{array}{lr}
       1 & \text{if }\alpha=\beta\\
       0 &  \text{if }\alpha\neq\beta
     \end{array}
   \right.$ and  $C_{\alpha}= \frac{d}{ds}c_s\Big|_{s=\alpha}$.\\ }\\

\tab The proof of this result will be given in Sections 2, 3, 4, and 5 where we will construct the solution.  Theorem \ref{cuspthm} in Section \ref{cusp} calculates the cuspidal spectrum, Theorem \ref{contthm} in Section \ref{cont} calculates the continuous spectrum and Theorem \ref{resthm} in Section \ref{res} calculates the residual spectrum.  In these sections, we will follow the regime presented in the proof of Theorem \ref{existencethm} and the final solution will be obtained by division in Section \ref{sol}. Finally, at the end of Section 5, we will prove that the solution can be meromorphically continued in $w$ to $\text{Re}(w)<1/2$.\\

\tab  Before we turn to the derivation of the solution, we will address what appear to be oddities at some of the borderline cases in $\mathcal{C}$. \\

 \subsection{Limits in $\alpha$ and $\beta$}\label{limits}
 
  \tab One would expect that the equality regimes ($\alpha=\beta$) presented in Theorem \ref{existencethm} can be recognized as a limits if those of $\text{Re}(\alpha)=\text{Re}(\beta)$ and this is in fact the case due to the addiction of the constant $ \frac{\pi}{3}C_{\alpha}$.\\
  
  {\lem\label{a=b} $\displaystyle \lim_{\beta\to\alpha} c_{\alpha}E_{1-\alpha+\beta}+c_{\beta}\cdot E_{1+\alpha-\beta}= - \frac{\pi}{3}C_{\alpha}+2c_{\alpha}E_1^*$ where $C_{\alpha}= \frac{d}{ds}c_s\Big|_{s=\alpha}$.}
  
  \begin{proof} Recall that $E_s$ has a simple pole at $s=1$ and thus the Laurent expansion for $E_s$ is given by
  $$E_s= \frac{a_{-1}}{s-1} + a_o + a_1(s-1) + a_2 (s-1)^2 +\dots$$ Using this we have
$$\lim_{\beta\to\alpha} c_{\alpha}E_{1-\alpha+\beta}+c_{\beta}\cdot E_{1+\alpha-\beta}$$
$$=\lim_{\beta\to\alpha} c_{\alpha}\cdot\left(\frac{a_{-1}}{\beta-\alpha} + a_o + a_1(\beta-\alpha)  +\dots\right)+c_{\beta}\left(\frac{a_{-1}}{\alpha-\beta} + a_o + a_1(\alpha-\beta)  +\dots\right)$$
$$=\lim_{\beta\to\alpha} c_{\alpha}\cdot\left(\frac{a_{-1}}{\beta-\alpha} + a_o + a_1(\beta-\alpha)  +\dots\right)+c_{\beta}\left(-\frac{a_{-1}}{\beta-\alpha} + a_o - a_1(\beta-\alpha) -\dots\right)$$
$$=\displaystyle\lim_{\beta\to\alpha} \frac{a_{-1}(c_{\alpha}-c_{\beta})}{\beta-\alpha} +2c_{\alpha}a_o 
=- \frac{\pi}{3} \frac{d}{ds}c_s\Big|_{s=\alpha}+2c_{\alpha}a_o = - \frac{\pi}{3}C_{\alpha}+2c_{\alpha}E_1^* $$
where $C_{\alpha}= \frac{d}{ds}c_s\Big|_{s=\alpha}$
since 
$\displaystyle\lim_{\beta\to\alpha} \frac{a_{-1}(c_{\alpha}-c_{\beta})}{\beta-\alpha} = -a_{-1}\frac{d}{ds}c_s\Big|_{s=\alpha}=- \frac{\pi}{3} \frac{d}{ds}c_s\Big|_{s=\alpha} $.\\ \end{proof}
  
\tab  We can now express the equality case of {\bf (III)} a limit of case {\bf (II)}. Suppose as in case {\bf (II)}, $1/2\geq \text{Re}(\alpha)\leq \text{Re}(\beta)<  \text{Re}(\alpha)+1/2$ where $\alpha\neq \beta$.\\

\tab If we also suppose as in {\bf (IIa)} that $\text{Re}(\alpha+\beta)>3/2$ then
$$S=E_{\alpha}\cdot E_{\beta}-\left(E_{\alpha+\beta}+c_{\alpha}\cdot E_{1-\alpha+\beta}+c_{\beta}\cdot E_{1+\alpha-\beta} \right).$$  As $\beta\to \alpha$ the first two terms become $E_{\alpha^2}- E_{2\alpha}$.  Thus   $\beta\to \alpha$, when $\text{Re}(\alpha)>3/4$, we get that $$S\to  (E_{\alpha})^2-  E_{2\alpha}-2c_{\alpha}E_1^*+\frac{\pi}{3}C_{\alpha}.$$
 Similarly, if we suppose as in {\bf (IIb)} that $\text{Re}(\alpha+\beta)<3/2$ then
$$S=E_{\alpha}\cdot E_{\beta}-\left(E_{\alpha+\beta}+c_{\alpha}\cdot E_{1-\alpha+\beta}+c_{\beta}\cdot E_{1+\alpha-\beta} +  c_{\alpha}c_{\beta}E_{2-\alpha-\beta}\right).$$    Thus   $\beta\to \alpha$, when $\text{Re}(\alpha)>3/4$, we get that $$S\to  (E_{\alpha})^2-  E_{2\alpha}-2c_{\alpha}E_1^* - c_{\alpha}^2E_{2-2\alpha}+\frac{\pi}{3}C_{\alpha}.$$

\tab However, despite this nice continuity where near where $\alpha = \beta$, one can see that we are not guaranteed the existence of the solution when $\alpha = \beta$ and $\text{Re}(\alpha)=3/4$.  In fact, the strategy presented in Theorem \ref{existencethm} breaks down.  When $\text{Re}(\alpha)=3/4$ we have  $E_{\alpha}^2 = y^{3/2+2\text{Im}(\alpha)i}+2c_{\alpha}y+c_{\alpha}^2 y^{1/2-2\text{Im}(\alpha)i}+R$ and subtracting $E_{2-2\alpha}$ for example will cause the first term to vanish but will also introduce a new non-$L^2$ term $y^{1/2-2\text{Im}(\alpha)i}$ to appear.  In fact, we have the following results which only guarantee the existence of a solution under certain conditions.

{\thm\label{weirdexist}\hfill

\begin{enumerate}[(i)]
\item  In $\text{Re}(w)>1/2$, for $\alpha\neq\beta$, $1/2\leq \text{Re}(\alpha)\leq \text{Re}(\beta)< \text{Re}(\alpha)+1/2$ and $\text{Re}(\alpha+\beta)=3/2$, $\displaystyle (\Delta-\lambda)u=E_{\alpha}E_{\beta}\phantom{e}\text{on}\phantom{e}\Gamma\backslash\mathfrak{H}$ has a unique solution in $H^{-\infty}(\Gamma\backslash \mathfrak{H})\oplus\mathcal{E}(\Gamma\backslash \mathfrak{H})$ when $2\alpha -1$  or $2\beta -1$ is a nontrivial zero of $\zeta(s)$. If it is also the case that $\text{Re}(\beta)= \text{Re}(\alpha)+1/2$,   $\displaystyle (\Delta-\lambda)u=E_{\alpha}E_{\beta}\phantom{e}\text{on}\phantom{e}\Gamma\backslash\mathfrak{H}$ has a unique solution in $H^{-\infty}(\Gamma\backslash \mathfrak{H})\oplus\mathcal{E}(\Gamma\backslash \mathfrak{H})$ when $2\alpha -1$  is a nontrivial zero of $\zeta(s)$.
 
 \item In $\text{Re}(w)>1/2$, for  $\text{Re}(\alpha)=3/4$, $\displaystyle (\Delta-\lambda)u=E_{\alpha}^2\phantom{e}\text{on}\phantom{e}\Gamma\backslash\mathfrak{H}$ has a unique solution in $H^{-\infty}(\Gamma\backslash \mathfrak{H})\oplus\mathcal{E}(\Gamma\backslash \mathfrak{H})$ when $2\alpha -1$ is a nontrivial zero of $\zeta(s)$.\\
 
 \end{enumerate}}
 
\tab  Before we proceed with the proof, it should be noted that in the theorem above (ii) is a special instance of (i).  However, we will provided a proof of both for a few reasons. One reason being that we will be using limits from the left and right of the solutions previously found and the solutions appear to be slightly different for $\alpha=\beta$ versus $\alpha\neq \beta$ (since the $S$'s constructed are differently) even though there limits are equal.  However, the main reason is that it is easier to follow the argument in the $\alpha=\beta$ case and then see how it extends to the inequality case.\\

\begin{proof} As shown in Theorem \ref{existencethm}, to demonstrate the existence and uniqueness of the solution, it suffices to construct appropriate $S$ in $L^2(\Gamma\backslash\h)$.

%---(ii):

\tab We will begin with a proof of (ii) since it is a simplified case of (i) and exemplifies the same general phenomenon.  Observe that in the regime where $\alpha=\beta$, the $S$ given by {\bf (IIIa)} and {\bf (IIIb)} differ only by one term $ c_{\alpha}^2E_{2-2\alpha}$. This implies that we cannot have a simultaneous solution corresponding to both 
$$S= (E_{\alpha})^2-  E_{2\alpha}-2c_{\alpha}E_1^*+\frac{\pi}{3}C_{\alpha}$$
and 
$$S= (E_{\alpha})^2-  E_{2\alpha}-2c_{\alpha}E_1^* - c_{\alpha}^2E_{2-2\alpha}+\frac{\pi}{3}C_{\alpha}$$ at $\text{Re}(\alpha)=3/4$ since their difference $\displaystyle\frac{c_{\alpha}^2E_{2-2\alpha}}{\lambda_{2-2\alpha}-\lambda_w}$ is not in $L^2(\Gamma\backslash\h)$. In fact, in general, neither of these contrived $S$'s will be in $L^2(\Gamma\backslash\h)$ in general since:

\tab In regime {\bf (IIIa)},
$$S= (E_{\alpha})^2-  E_{2\alpha}-2c_{\alpha}E_1^* + \frac{\pi}{3}C_{\alpha}$$
$$=c_{\alpha}^2y^{2-2\alpha}-c_{2\alpha}y^{1-2\alpha}+2c_{\alpha}\frac{3}{\pi }\log y+C + \frac{\pi}{3}C_{\alpha}+R $$
and $y^{2-2\alpha}\notin L^2(\Gamma\backslash\mathfrak{H})$ for $\text{Re}(\alpha)=3/4$. Thus we would need $c_{\alpha}^2 = 0$ in order for $S$ to be in $L^2$.  Recall that $c_\alpha = \frac{\xi(2-2\alpha)}{\xi(2\alpha)}= \frac{\xi(2\alpha-1)}{\xi(2\alpha)}$.  For $\alpha=3/4+it$, this yields $\xi(2\alpha-1) =  \xi (1/2+2it)$.  Then $S\in L^2(\Gamma\backslash\mathfrak{H})$ when $2\alpha-1$ is a nontrivial zero of $\xi$.  Thus a solution to $\displaystyle (\Delta-\lambda)u=E_{\alpha}^2\phantom{e}\text{on}\phantom{e}\Gamma\backslash\mathfrak{H}$ exists when $2\alpha-1$ is a nontrivial zero of $\zeta$.

\tab In regime {\bf (IIIb)}, 
$$S= (E_{\alpha})^2-  E_{2\alpha}-2c_{\alpha}E_1^*-c_{\alpha}^2E_{2-2\alpha}+\frac{\pi}{3}C_{\alpha}$$
$$=-c_{2\alpha}y^{1-2\alpha}+2c_{\alpha}\frac{3}{\pi }\log y
-c_{\alpha}^2c_{2-2\alpha}y^{2\alpha-1}+C+ \frac{\pi}{3}C_{\alpha}+R$$
and  $y^{2\alpha-1}\notin L^2(\Gamma\backslash\mathfrak{H})$ for $\text{Re}(\alpha)=3/4$.  Thus we would need either $c_{\alpha}^2 = 0$ or $c_{2-2\alpha}=0.$  Observe that $c_{2-2\alpha}=0$ when $\xi(4ti)=0$ for $\alpha=3/4+it$.  Since $\xi$ have no zeros on the imaginary axis, we need only consider where $c_{\alpha}^2 = 0$.  As above, a solution to $\displaystyle (\Delta-\lambda)u=E_{\alpha}^2\phantom{e}\text{on}\phantom{e}\Gamma\backslash\mathfrak{H}$ exists when $2\alpha-1$ is a nontrivial zero of $\zeta$.

\tab Since, as previously stated, these solutions, given by regime {\bf (IIIa)} and {\bf (IIIb)} may not be distinct.  In fact, for a solution to exist, we need $c_{\alpha}^2E_{2-2\alpha}\to 0$ as  $\text{Re}(\alpha)\to 3/4^-$.  This will happen when  $c_{\alpha}^2 =0$ or when $E_{2-2\alpha}=0$.\footnote{Note that when  $E_{2-2\alpha}=0$, the limits of $S$ from the left and right of $\text{Re}(\alpha)=3/4$ will be equal but that neither $S$ will be in $L^2(\Gamma\backslash\mathfrak{H})$.}  When $c_{\alpha}^2 =0$ we see that the limit of the solution from each side of $\text{Re}(\alpha)=3/4$ will approach the above solution at  $\text{Re}(\alpha)=3/4$.   \\

%---(i):

\tab Now let's turn to case (i).  Observe that in the regime where $\alpha\neq \beta$ and $1/2\leq\text{Re}(\alpha)\leq \text{Re}(\beta)<  \text{Re}(\alpha)+1/2$ , the $S$ given by {\bf (IIa)} and {\bf (IIb)} differ only by one term $c_{\alpha}c_{\beta}\cdot E_{2-\alpha-\beta}$. This implies that we cannot have a simultaneous solution corresponding to both 
$$S=E_{\alpha}\cdot E_{\beta} -  E_{\alpha+\beta}-c_{\beta}\cdot E_{1+\alpha-\beta}-c_{\alpha}\cdot E_{1-\alpha+\beta} $$
and 
$$S= E_{\alpha}\cdot E_{\beta}-   E_{\alpha+\beta}-c_{\beta}\cdot E_{1+\alpha-\beta}-c_{\alpha}\cdot E_{1-\alpha+\beta} -c_{\alpha}c_{\beta}\cdot E_{2-\alpha-\beta}$$ at $\text{Re}(\alpha+\beta)=3/2$ since their difference $\displaystyle\frac{c_{\alpha}c_{\beta}\cdot E_{2-\alpha-\beta}}{\lambda_{2-\alpha-\beta}-\lambda_w}$ is not in $L^2(\Gamma\backslash\h)$. In fact, in general, neither of these contrived $S$'s will be in $L^2(\Gamma\backslash\h)$ in general since:

\tab In regime {\bf (IIa)},
$$S= E_{\alpha}\cdot E_{\beta} -  E_{\alpha+\beta}-c_{\beta}\cdot E_{1+\alpha-\beta}-c_{\alpha}\cdot E_{1-\alpha+\beta} $$
$$=c_{\alpha}c_{\beta}  y^{2-\alpha-\beta}-c_{\alpha+\beta} y^{1-\alpha-\beta}
-c_{\beta}c_{1+\alpha-\beta} y^{-\alpha+\beta}
-c_{\alpha}c_{1-\alpha+\beta} y^{\alpha-\beta}+R$$
and $y^{2-\alpha-\beta}\notin L^2(\Gamma\backslash\mathfrak{H})$ for $\text{Re}(\alpha +\beta)=3/2$. Thus we would need $c_{\alpha} = 0$  or $c_{\beta}=0$ in order for $S$ to be in $L^2$.

\tab In regime {\bf (IIb)}, 
$$S=E_{\alpha}\cdot E_{\beta} -  E_{\alpha+\beta}-c_{\beta}\cdot E_{1+\alpha-\beta}-c_{\alpha}\cdot E_{1-\alpha+\beta} $$
$$=-c_{\alpha+\beta} y^{1-\alpha-\beta}
-c_{\beta}c_{1+\alpha-\beta} y^{-\alpha+\beta}-c_{\alpha}c_{1-\alpha+\beta} y^{\alpha-\beta}
-c_{\alpha}c_{\beta}c_{2-\alpha-\beta} y^{\alpha+\beta-1}+R$$
and  $y^{\alpha+\beta-1}\notin L^2(\Gamma\backslash\mathfrak{H})$ for $\text{Re}(\alpha+\beta)=3/2$.  Thus we would need either $c_{\alpha}= 0$, $c_{\beta}=0$  or $c_{1-\alpha+\beta}=0.$

\tab Thus solution to $\displaystyle (\Delta-\lambda)u=E_{\alpha}\cdot E_{\beta}\phantom{e}\text{on}\phantom{e}\Gamma\backslash\mathfrak{H}$ exists when $2\alpha-1$ or $2\beta-1$is a nontrivial zero of $\zeta$. Furthermore, when $\text{Re}(\beta)= \text{Re}(\alpha)+1/2$, we also have $y^{\alpha-\beta}\notin L^2(\Gamma\backslash\mathfrak{H})$.  We will then need either $c_{\alpha}=0$ or $c_{1-\alpha +\beta}=0$.  However, in this case, we also have $c_{1-\alpha +\beta}=c_{3/2+it}\neq 0$.

\tab Since, as previously stated, these solutions, given by regime {\bf (IIa)} and {\bf (IIb)} may not be distinct.  In fact, for a solution to exist, we need $c_{\alpha}c_{\beta}\cdot E_{2-\alpha-\beta}\to 0$ as  $\text{Re}(\alpha+\beta)\to 3/2^-$.  This will happen when  $c_{\alpha} =0$,  $c_{\beta} =0$ or when $E_{2-\alpha-\beta}=0$. \footnote{Note that when  $E_{2-\alpha-\beta}=0$, the limits of $S$ from the left and right of $\text{Re}(\alpha+\beta)=3/2$ will be equal but that neither $S$ will be in $L^2(\Gamma\backslash\mathfrak{H})$.}  When $c_{\alpha} =0$ or when $c_{\beta}=0$ we see that the limit of the solution from each side of $\text{Re}(\alpha+\beta)=3/2$ will approach the above solution at  $\text{Re}(\alpha+\beta)=3/2$.   \\

\end{proof}

\tab The solution on these regions where say  $\text{Re}(\alpha+\beta)=3/2$ will present itself as a limit and will thus be identified with the corresponding limit of the solution in Theorem \ref{main}. Explicitly, when  $\alpha\neq\beta$, $1/2\leq \text{Re}(\alpha)\leq \text{Re}(\beta)< \text{Re}(\alpha)+1/2$ and $\text{Re}(\alpha+\beta)=3/2$ and $2\alpha-1$ is a zero of $\zeta(s)$ (i.e. $c_{\alpha}=0$), $$S=E_{\alpha}\cdot E_{\beta} -  E_{\alpha+\beta}-c_{\beta}\cdot E_{1+\alpha-\beta}$$ is in $L^2$ for $\text{Re}(\alpha+\beta)=3/2$. In this case, the equation  $\displaystyle (\Delta-\lambda)u=E_{\alpha}\cdot E_{\beta}\phantom{e}\text{on}\phantom{e}\Gamma\backslash\mathfrak{H}$ has a unique solution in $H^{-\infty}(\Gamma\backslash \mathfrak{H})\oplus\mathcal{E}(\Gamma\backslash \mathfrak{H})$ with spectral expansion which lies in $H^2(\Gamma\backslash \mathfrak{H})\oplus \mathcal{E}(\Gamma\backslash \mathfrak{H})$ and is given by
$$u_w= \frac{  E_{\alpha+\beta}}{\lambda_{\alpha+\beta}-\lambda_w}+\frac{c_{\beta}\cdot E_{1+\alpha-\beta}}{\lambda_{1+\alpha-\beta}-\lambda_w}+\sum_{f\text{ cfm}} \frac{\Lambda(\alpha,\overline{f} \times E_{\beta})\cdot f}{\lambda_{s_f}-\lambda_w}+\frac{1}{4\pi i}\int_{(1/2)}\Lambda (\overline{s},E_{\alpha}\times E_{\beta})\cdot \frac{E_s}{\lambda_{s}-\lambda_w}\,ds.$$\\
 
 \tab We will conclude this section by showing that there are no solutions in $H^{2}(\Gamma\backslash \mathfrak{H})\oplus\mathcal{E}(\Gamma\backslash \mathfrak{H})$ on the lines $\text{Re}(\alpha+\beta)=3/2$ or $\text{Re}(\alpha)=3/4$ where neither $c_\alpha$ nor $c_\beta$ are zero.  We will need the following preliminary results in order to establish the other direction of the implied biconditional.\\

   { \lem\label{notinL2} Let $\xi_i,\dots,\xi_n$ be distinct real numbers and $\sigma_1,\dots,\sigma_n$ real.   For non-zero complex $c_1,\dots , c_n$, the function $f(y)=\sum_j c_j y^{\sigma_j+i\xi_j}$ is in $L^2([1,\infty), \frac{dy}{y^2})$ for if and only if $\sigma_j<1/2$ for all $j$.}
   
   \begin{proof} If $\mu:=\max_j \sigma_j<1/2$, then $f(y)\in L^2([1,\infty), \frac{dy}{y^2})$.
   
   \tab On the other hand, suppose that $\mu=1/2$.  Observe that 
   $$\left|\sum_j c_j y^{\sigma_j+i\xi_j}\right|^2_{L^2([1,\infty), \frac{dy}{y^2})} 
  = \lim_{b\to\infty}\int_1^{b} \left|\sum_j c_j y^{\sigma_j+i\xi_j}\right|^2\,\frac{dy}{y^2} 
   = \lim_{b\to\infty}\int_1^{b} \sum_{j,k} c_j\overline{c}_k\, y^{i(\xi_j-\xi_k)}\,y^{\sigma_{j}+\sigma_{k}}\,\frac{dy}{y^2}. $$  If $\mu=1/2$ then all the terms $ c_j\overline{c}_k\, y^{i(\xi_j-\xi_k)}\,y^{\sigma_{j}+\sigma_{k}}$ are in $L^1$ except for possibly the sum over $j,k$ with $\sigma_j=1/2=\sigma_k$.

  \tab Suppose now that $\mu=1/2$ and $\sigma_j=1/2=\sigma_k$.  Among the tails for the improper integral for the $L^2$-norm-squared integrals are 
   $$\sum_{j,k} c_j\overline{c}_k\, \int_T^{T^2} y^{i(\xi_j-\xi_k)}\,\frac{dy}{y}. $$
   For $j=k$, the term is $|c_j|^2\cdot\log T$.  For $j\neq k$, the term is 
   $\displaystyle c_j\overline{c}_k\frac{(T^2)^{i(\xi_j-\xi_k)}-T^{i(\xi_j-\xi_k)}}{i(\xi_j-\xi_k)}$.  The sum of the $j\neq k$ is uniformly bounded in $T$.  The sum of the $j=k$ term is a strictly positive real multiple of $\log T$ and goes to $\infty$ as $T\to\infty$.  Thus an expression of the form $\sum_j c_j y^{1/2+i\xi_j}$ will be in $L^2([1,\infty), \frac{dy}{y^2})$ only when $c_j=0$ for each $j$. Furthermore, if $\mu=1/2$ then $f(y)$ cannot be in $L^2([1,\infty), \frac{dy}{y^2})$.
   
\tab Finally, in the case of $\mu>1/2$, $y^{1/2-\mu}\cdot f(y)$ is in $L^2([1,\infty), \frac{dy}{y^2})$ if $f(y)$ is and this reduces to the case where $\mu=1/2$ just treated.
   
   \end{proof}

  {\lem\label{lem7} \hfill
 
\begin{enumerate}[(i)]
\item  For $\alpha\neq\beta$, $1/2\leq \text{Re}(\alpha)\leq \text{Re}(\beta)< \text{Re}(\alpha)+1/2$ and $\text{Re}(\alpha+\beta)=3/2$, then $E_\alpha E_\beta \notin L^2(\Gamma\backslash \mathfrak{H})\oplus\mathcal{E}(\Gamma\backslash \mathfrak{H})$ unless $2\alpha -1$ or  $2\beta-1$ is a zero of $\zeta(s)$.
 \item For  $\text{Re}(\alpha)=3/4$, then $E_\alpha ^2\notin L^2(\Gamma\backslash \mathfrak{H})\oplus\mathcal{E}(\Gamma\backslash \mathfrak{H})$ unless $2\alpha -1$ is a zero of $\zeta(s)$.\end{enumerate}}
 
 \begin{proof}  We will again first establish the result of (ii) first.  Assume $\alpha = \beta$ and  $\text{Re}(\alpha)=3/4$. We have $E_{\alpha}^2 = y^{3/2+2\text{Im}(\alpha)i}+2c_{\alpha}y+c_{\alpha}^2 y^{1/2-2\text{Im}(\alpha)i}+R$.   Subtracting $E_{2\alpha}$ and $2c_\alpha E_1^*$ will eliminate the first term terms and what remains will be in $L^2$ with the exception of the term $c_{\alpha}^2 y^{1/2-2\text{Im}(\alpha)i}$.   Subtracting $c_{\alpha}^2E_{2-2\alpha}$ will cause the last term to vanish but will also introduce a new non-$L^2$ term $c_{\alpha}^2  c_{2-2\alpha}y^{1/2+2\text{Im}(\alpha)i}$ to appear.  Furthermore, observe that $ c_{2-2\alpha} $ cannot be zero for $\text{Re}(\alpha)=3/4$ since $\zeta(s)$ has no zeros on the line $\text{Re}(s)=1$.  More formally, the non-rapidly decreasing terms of $\mathcal{E}(\Gamma\backslash\h)$ can be written as linear combinations of the form $\sum_j c_j y^{\sigma_j+i\xi_j}$ and so by Lemma \ref{notinL2} $c_{\alpha}^2 y^{1/2-2\text{Im}(\alpha)i}+\sum_j c_j y^{\sigma_j+i\xi_j}$ is not in $L^2$ except when $c_{\alpha}=0$.  Thus the only way for 
 $E_\alpha^2$ to be in $L^2(\Gamma\backslash \mathfrak{H})\oplus\mathcal{E}(\Gamma\backslash \mathfrak{H})$ is by $c_{\alpha}$ being 0 and thus it is necessary that $\zeta(2\alpha-1)=0$.
 
 \tab For (i), assume  $\alpha\neq\beta$, $1/2\leq \text{Re}(\alpha)\leq \text{Re}(\beta)< \text{Re}(\alpha)+1/2$ and $\text{Re}(\alpha+\beta)=3/2$. Then $E_{\alpha}E_{\beta}=y^{3/2+\text{Im}(\alpha+\beta)i}+c_{\beta} y^{1+\alpha-\beta}+c_{\alpha} y^{1-\alpha+\beta}+c_{\alpha}c_{\beta}  y^{1/2-\text{Im}(\alpha+\beta)i}+R$.  Again, the first three terms can be eliminated putting what remains in $L^2$ with the exception of the term $c_{\alpha}c_{\beta}  y^{1/2-\text{Im}(\alpha+\beta)i}$.  Subtracting $c_{\alpha}c_{\beta}E_{2-\alpha-\beta}$ will cause the first term to vanish but will also introduce a new non-$L^2$ term $c_{\alpha}c_{\beta}c_{2-\alpha-\beta}y^{1/2+\text{Im}(\alpha+\beta)i}$ to appear. Again $ c_{2-\alpha-\beta} $ cannot be zero for $\text{Re}(\alpha+\beta)=3/4$ since $\zeta(s)$ has no zeros on the line $\text{Re}(s)=1$.  Furthermore, by Lemma \ref{notinL2} $c_{\alpha}c_{\beta}c_{2-\alpha-\beta}y^{1/2+\text{Im}(\alpha+\beta)i}+\sum_j c_j y^{\sigma_j+i\xi_j}$ is not in $L^2$ except when $c_{\alpha}=0$ or $c_{\beta}=0$. Thus the only way for 
 $E_\alpha E_\beta$ to be in $L^2(\Gamma\backslash \mathfrak{H})\oplus\mathcal{E}(\Gamma\backslash \mathfrak{H})$ is by $c_{\alpha}$ or $c_\beta$ being 0 and thus it is necessary that $\zeta(2\alpha-1)=0$ or $\zeta(2\beta-1)=0$.

 \end{proof}

 {\lem\label{lem8} If there exists a solution $u$ to $\displaystyle (\Delta-\lambda)u=E_{\alpha}\cdot E_{\beta}\phantom{e}\text{on}\phantom{e}\Gamma\backslash\mathfrak{H}$ in $H^2(\Gamma\backslash \mathfrak{H})\oplus\mathcal{E}(\Gamma\backslash \mathfrak{H})$ then $E_\alpha E_\beta \in L^2(\Gamma\backslash \mathfrak{H})\oplus\mathcal{E}(\Gamma\backslash \mathfrak{H})$. }
 
 \begin{proof} Suppose $u$ is a solution to $\displaystyle (\Delta-\lambda)u=E_{\alpha}\cdot E_{\beta}\phantom{e}\text{on}\phantom{e}\Gamma\backslash\mathfrak{H}$ and $u \in H^2(\Gamma\backslash \mathfrak{H})\oplus\mathcal{E}(\Gamma\backslash \mathfrak{H})$.  Say $u= f+\sum_k a_k F_{s_k}$ for $f\in H^2(\Gamma\backslash \mathfrak{H})$ and $\sum_k a_k F_{s_k}\in \mathcal{E}(\Gamma\backslash \mathfrak{H})$.  Then 
 $$E_\alpha\cdot E_\beta = (\Delta-\lambda_w)u = (\Delta-\lambda_w)\left( f+\sum_k a_k F_{s_k}\right)$$
 $$\phantom{we}= (\Delta-\lambda_w)f +  \sum_k a_k(\lambda_{s_k}-\lambda_w) F_{s_k}$$
 Thus
 $$E_\alpha\cdot E_\beta -\sum_k a_k (\lambda_{s_k}-\lambda_w)F_{s_k}
 = (\Delta-\lambda_w)f\in H^0(\Gamma\backslash \mathfrak{H}) = L^2(\Gamma\backslash \mathfrak{H})$$
 
 since $f\in H^2(\Gamma\backslash \mathfrak{H})$. 
 
 \end{proof}

\tab Combining the last two results, we see that if there were a solution $u$ in $H^2(\Gamma\backslash \mathfrak{H})\oplus\mathcal{E}(\Gamma\backslash \mathfrak{H})$ it would be contrived as above and thus there is no such solutions on $\text{Re}(\alpha+\beta)=3/2$ where neither $2\alpha-1$ nor $2\beta-1$ is a zero of $\zeta$.

{\thm \hfill

\begin{enumerate}[(i)]
\item  In $\text{Re}(w)>1/2$, for $\alpha\neq\beta$, $1/2\leq \text{Re}(\alpha)\leq \text{Re}(\beta)< \text{Re}(\alpha)+1/2$ and $\text{Re}(\alpha+\beta)=3/2$, $\displaystyle (\Delta-\lambda)u=E_{\alpha}E_{\beta}\phantom{e}\text{on}\phantom{e}\Gamma\backslash\mathfrak{H}$ has a unique solution in $H^{2}(\Gamma\backslash \mathfrak{H})\oplus\mathcal{E}(\Gamma\backslash \mathfrak{H})$ if and only if $2\alpha -1$  or $2\beta -1$ is a nontrivial zero of $\zeta(s)$. 
 \item In $\text{Re}(w)>1/2$, for  $\text{Re}(\alpha)=3/4$, $\displaystyle (\Delta-\lambda)u=E_{\alpha}^2\phantom{e}\text{on}\phantom{e}\Gamma\backslash\mathfrak{H}$ has a unique solution in $H^{2}(\Gamma\backslash \mathfrak{H})\oplus\mathcal{E}(\Gamma\backslash \mathfrak{H})$ if and only if $2\alpha -1$ is a nontrivial zero of $\zeta(s)$.\\

 \end{enumerate}}

 \tab The proof of this result follows directly from Theorem \ref{weirdexist} in conjunction with Lemma \ref{lem7} and Lemma \ref{lem8}.

 \vspace{.5cm}

%------------------------------------------------------
%------------------------------------------------------
\section{The Cuspidal Spectrum}\label{cusp}
%------------------------------------------------------

\tab We will now compute the cuspidal spectrum for the expansion of the solution. Let $f$ be a cuspform with Fourier expansion $$f(z)=\sum_{n\neq 0} c_n \cdot W_s(|n|y)\cdot e^{2\pi i n x}$$ 

{\thm\label{cuspthm}  For $f$ a cuspform and $\alpha, \beta\in \mathcal{C}$, $$ \langle S,f\rangle_{L^2}=  L(\alpha,\overline{f} \times E_{\beta})\cdot 
\frac{\pi^{\beta+\overline{s}-\alpha}}{2\,\Gamma(\beta)\Gamma(\overline{s})}\cdot
\frac{\Gamma(\frac{\alpha+\beta -\overline{s}}{2})\Gamma(\frac{\alpha-\beta +\overline{s}}{2})\Gamma(\frac{\alpha+1-\beta -\overline{s}}{2})\Gamma(\frac{\alpha-1+\beta +\overline{s}}{2})}{\Gamma(\alpha)}$$
$$=\Lambda(\alpha,\overline{f} \times E_{\beta})$$ for each $S$ proposed in Theorem \ref{existencethm}.}\\

\tab The proof of this result is given in the what remains of this section.  Before we investigate each case for each different $S$, we will first perform two useful computations.  Many examples of the following computations can be found in relevant literature -- for example, in \cite{Garrett2015} or \cite{GradRyz}.
%----------------

{\lem\label{EaEbf}  For each $ \alpha$ and $\beta$,

$$\int_{\Gamma\backslash \mathfrak{H}}E_{\alpha}\cdot E_{\beta}\cdot  \overline{f}\, \frac{dx\,dy}{y^2}=L(\alpha,\overline{f}  \times E_{\beta})\cdot 
\frac{\pi^{\beta+\overline{s}-\alpha}}{2\,\Gamma(\beta)\Gamma(\overline{s})}\cdot
\frac{\Gamma(\frac{\alpha+\beta -\overline{s}}{2})\Gamma(\frac{\alpha-\beta +\overline{s}}{2})\Gamma(\frac{\alpha+1-\beta -\overline{s}}{2})\Gamma(\frac{\alpha-1+\beta +\overline{s}}{2})}{\Gamma(\alpha)}$$
$$=\Lambda(\alpha,\overline{f}  \times E_{\beta})$$}

\begin{proof} The computation that follows we can will begin by examining $1/2 < \text{Re}(\alpha)< \text{Re}(\alpha)+1/2<\text{Re}(\beta)$ since  $\int_{\Gamma\backslash \mathfrak{H}}E_{\alpha}\cdot E_{\beta}\cdot  \overline{f}\, \frac{dx\,dy}{y^2}$ is holomorphic on this region.  Since it extends to meromorphic function of $\alpha$ and $\beta$ (since $f$ is cuspform), we can evaluate it via identity principle by moving $\alpha$ to $\text{Re}(\alpha)>1$  so that we can then unwind $E_{\alpha}$. 

\tab Thus, by unwinding, we have

$$\int_{\Gamma\backslash \mathfrak{H}}E_{\alpha}\cdot E_{\beta}\cdot  \overline{f}\, \frac{dx\,dy}{y^2}=\int_{\Gamma\backslash \mathfrak{H}}\sum_{\gamma\in P\backslash \Gamma}\text{Im}(\gamma z)^{\alpha}\cdot E_{\beta}\cdot  \overline{f}\, \frac{dx\,dy}{y^2}
=\int_{P\backslash \mathfrak{H}}y^{\alpha}\cdot E_{\beta}\cdot  \overline{f}\, \frac{dx\,dy}{y^2}$$
$$=\int_0^{\infty}\int_0^1y^{\alpha}\cdot E_{\beta}\cdot  \overline{f}\, \frac{dx\,dy}{y^2}$$

 since the fundamental domain of $P\backslash \mathfrak{H}$ is $\{z=x+iy\in\mathfrak{H}~|~ 0\leq x\leq 1\}$.
 Now, writing out the Fourier-Whittaker expansions for $E_{\beta}$ and $f$, we have 

$$\int_0^{\infty}\int_0^1y^{\alpha}\cdot \left( c_PE_{\beta}+\sum_{n\neq 0}\varphi(n,\beta)\cdot W_{\beta}(|n|y)\cdot e^{2\pi i n x}\right) \cdot \left( \sum_{m\neq 0 }\overline{c}_m \cdot \overline{W_s}(|m|y)\cdot e^{-2\pi i m x}\right)\, \frac{dx\,dy}{y^2}$$

\hfill where $\varphi$, $W_s$ and $c_m$ are defined in Section \ref{background}

$\displaystyle =\int_0^{\infty}\int_0^1y^{\alpha}\Big[ c_PE_{\beta}\cdot\sum_{m\neq 0}  \overline{c}_m \cdot \overline{W_s}(|m|y)\cdot e^{-2\pi i m x}\phantom{wwwwweeeeeppepeeee}$
$$\phantom{wwwweeeeeeepeeeeee}+\left(\sum_{n\neq 0}\varphi(n,\beta)\cdot W_{\beta}(|n|y)\cdot e^{2\pi i n x}\right) \cdot \left( \sum_{m\neq 0}\overline{c}_m \cdot \overline{W_s}(|m|y)\cdot e^{-2\pi i m x}\right)\Big]\, \frac{dx\,dy}{y^2}$$

$\displaystyle =\int_0^{\infty}y^{\alpha}\Big[c_PE_{\beta}\cdot\sum_{m\neq 0} \overline{c}_m \cdot \overline{W_s}(|m|y) \cdot \int_0^1 e^{-2\pi i m x}\, dx\phantom{wwweeeeee}$
$$\phantom{wweeeeeeeeee}+\sum_{m,n\neq 0}\varphi(n,\beta)W_{\beta}(|n|y)\cdot\overline{c}_m \overline{W_s}(|m|y) \int_0^1e^{2\pi i (n-m) x}\, dx \Big]\, \frac{dy}{y^2}$$

$$\displaystyle=\int_0^{\infty}y^{\alpha}\Big[ c_PE_{\beta}\cdot\sum_{m\neq 0}  \overline{c}_m \cdot \overline{W_s}(|m|y)\cdot\delta_{0,m}+\sum_{m,n\neq 0}\varphi(n,\beta)W_{\beta}(|n|y)\cdot\overline{c}_m \overline{W_s}(|m|y)\cdot \delta_{n,m} \Big]\, \frac{dy}{y^2}$$

We see that the sum is zero when $n\neq m$ (furthermore, since $f$ is a cuspform the $n=0$ term vanishes) and we get

$$\int_0^{\infty}y^{\alpha}\sum_{n\neq 0}\varphi(n,\beta)W_{\beta}(|n|y)\cdot\overline{c}_m \overline{W_s}(|m|y) \, \frac{dy}{y^2}
=\sum_{n\neq 0}\varphi(n,\beta)\cdot \overline{c}_n\cdot \int_0^{\infty}y^{\alpha} \cdot W_{\beta}(|n|y)\,\overline{W_s}(|n|y) \, \frac{dy}{y^2}$$
Replacing $y$ by $y/n$, we have
$$\sum_{n\neq 0}\frac{\varphi(n,\beta) \cdot\overline{c}_n}{n^{\alpha-1}}\cdot \int_0^{\infty}y^{\alpha}\cdot W_{\beta}(y)\,\overline{W_s}(y) \, \frac{dy}{y^2}=L(\alpha,\overline{f} \times E_{\beta})\cdot \int_0^{\infty}y^{\alpha}\cdot W_{\beta}(y)\,\overline{W_s}(y) \, \frac{dy}{y^2}$$
$$=L(\alpha,\overline{f}  \times E_{\beta})\cdot 
\frac{\pi^{\beta+\overline{s}-\alpha}}{2\,\Gamma(\beta)\Gamma(\overline{s})}\cdot
\frac{\Gamma(\frac{\alpha+\beta -\overline{s}}{2})\Gamma(\frac{\alpha-\beta +\overline{s}}{2})\Gamma(\frac{\alpha+1-\beta -\overline{s}}{2})\Gamma(\frac{\alpha-1+\beta +\overline{s}}{2})}{\Gamma(\alpha)}
=\Lambda(\alpha,\overline{f}  \times E_{\beta})$$
 \end{proof}

%----------------

{\lem\label{RS=0} For any $r\neq 1$,  $\int_{\Gamma\backslash \mathfrak{H}}E_{r}\cdot \overline{f}~ \frac{dx\,dy}{y^2}=0 $ and $\int_{\Gamma\backslash \mathfrak{H}}E_{1}^*\cdot \overline{f}~ \frac{dx\,dy}{y^2}=0 $. }
\begin{proof} Since the integrals extend to meromorphic function of $r$ (since $f$ is cuspform), we can evaluate it via identity principle by moving $r$ to $\text{Re}(r)>1$  so that we can then unwind $E_{r}$. 
$$\int_{\Gamma\backslash \mathfrak{H}}E_{r}\cdot \overline{f}~ \frac{dx\,dy}{y^2}
=\int_{\Gamma\backslash \mathfrak{H}}\,\sum_{\gamma\in P\backslash \Gamma}\text{Im}(\gamma z)^{r}\,\overline{f} ~ \frac{dx\,dy}{y^2}
=\int_{P\backslash \mathfrak{H}} \text{Im}(z)^{r}\, \overline{f}~ \frac{dx\,dy}{y^2}$$ 
where the fundamental domain of $P\backslash \mathfrak{H}$ is 
$\{z=x+iy\in\mathfrak{H}~|~ 0\leq x\leq 1\}$

$$=\int_{P\backslash \mathfrak{H}}y^{r}\overline{f}(z)\, \frac{dx\,dy}{y^2}
=\sum_{n> 0}\overline{c}_n\int_{y>0} y^{r} \cdot \overline{W}_s(|n|y)\left(\int_{0\leq x\leq 1} e^{2\pi i n x}\, dx\right)\frac{dy}{y^2}$$
$$=\sum_{n> 0}\overline{c_n}\int_{y>0} y^{r}\overline{W}_s(|n|y)\delta_{n,0}\, \frac{dy}{y^2}=0$$ 
\end{proof}

Finally, we should note that constants (such as $\frac{\pi}{3}C_{\alpha}$) are orthogonal to cuspforms in $L^2(\Gamma\backslash\h)$ so 
$$\int_{\Gamma\backslash \h} \frac{\pi}{3}C_{\alpha} \cdot \overline{f}\, \frac{dx\,dy}{y^2}=0$$

We can now quickly evaluate each case of $S$ for each $\alpha$ and $\beta$ presented above.  \\

%----------------
%----------------
\subsection{Regimes} Recall the regimes set up in the proof of Theorem \ref{existencethm}.  Again, suppose that $\alpha\neq 1$ and $\beta\neq 1$.\\

{\bf (I):} When $1/2 \leq \text{Re}(\alpha)< \text{Re}(\alpha)+1/2<\text{Re}(\beta)$,
$$ \langle S,f\rangle_{L^2}
=\int_{\Gamma\backslash \mathfrak{H}}\left(E_{\alpha}\cdot E_{\beta}-E_{\alpha+\beta}-c_{\alpha}\cdot E_{1-\alpha+\beta}\right)\cdot \overline{ f}\, \frac{dx\,dy}{y^2}$$
$$=\int_{\Gamma\backslash \mathfrak{H}}E_{\alpha}\cdot E_{\beta}\cdot \overline{ f}-E_{\alpha+\beta}\cdot \overline{ f}-c_{\alpha}\cdot E_{1-\alpha+\beta}\cdot \overline{ f}\, \frac{dx\,dy}{y^2}$$\\

%----------------

{\bf  (II):}  Suppose $1/2\leq\text{Re}(\alpha)
\leq \text{Re}(\beta)<  \text{Re}(\alpha)+1/2$ but that $\alpha\neq\beta$.\\

\tab {\bf (IIa)} If  $\text{Re}(\alpha+\beta)>3/2$ then 
$$ \langle S,f\rangle_{L^2}
=\int_{\Gamma\backslash \mathfrak{H}}\left(E_{\alpha}\cdot E_{\beta}-E_{\alpha+\beta}-c_{\beta}\cdot E_{1+\alpha-\beta}-c_{\alpha}\cdot E_{1-\alpha+\beta}\right)\cdot \overline{ f}\, \frac{dx\,dy}{y^2}$$
$$=\int_{\Gamma\backslash \mathfrak{H}}E_{\alpha}\cdot E_{\beta}\cdot\overline{ f}-E_{\alpha+\beta}\cdot\overline{ f}-c_{\beta}\cdot E_{1+\alpha-\beta}\cdot\overline{ f}-c_{\alpha}\cdot E_{1-\alpha+\beta}\cdot\overline{ f}\, \frac{dx\,dy}{y^2}$$

 \tab {\bf (IIb)} If $\text{Re}(\alpha+\beta)<3/2$ then
$$ \langle S,f\rangle_{L^2}
=\int_{\Gamma\backslash \mathfrak{H}}\left(E_{\alpha}\cdot E_{\beta}-E_{\alpha+\beta}-c_{\beta}\cdot E_{1+\alpha-\beta}-c_{\alpha}\cdot E_{1-\alpha+\beta}-c_{\alpha}c_{\beta}\cdot E_{2-\alpha-\beta}\right)\cdot \overline{ f}\, \frac{dx\,dy}{y^2}$$
$$=\int_{\Gamma\backslash \mathfrak{H}}E_{\alpha}\cdot E_{\beta}\cdot\overline{ f}-E_{\alpha+\beta}\cdot\overline{ f}-c_{\beta}\cdot E_{1+\alpha-\beta}\cdot\overline{ f}-c_{\alpha}\cdot E_{1-\alpha+\beta}\cdot\overline{ f}-c_{\alpha}c_{\beta}\cdot E_{2-\alpha-\beta}\cdot\overline{ f}\, \frac{dx\,dy}{y^2}$$ \\

 %----------------
 
{\bf  (III):} Suppose $\alpha=\beta$.\\

\tab {\bf (IIIa)} Suppose also that $\text{Re}(\alpha)>3/4$ then 
$$ \langle S,f\rangle_{L^2}
=\int_{\Gamma\backslash \mathfrak{H}}\left( (E_{\alpha})^2-  E_{2\alpha}-2c_{\alpha}E_1^* + \frac{\pi}{3}C_{\alpha}\right)\cdot \overline{ f}\, \frac{dx\,dy}{y^2}$$
$$=\int_{\Gamma\backslash \mathfrak{H}} (E_{\alpha})^2\cdot\overline{ f}-  E_{2\alpha}\cdot\overline{ f}-2c_{\alpha}E_1^*\cdot\overline{ f}+ \frac{\pi}{3}C_{\alpha}\cdot\overline{ f}\, \frac{dx\,dy}{y^2}$$

 %----------------
 
\tab {\bf (IIIb)} Now suppose $1/2\leq\text{Re}(\alpha)<3/4$
 then 
$$ \langle S,f\rangle_{L^2}
=\int_{\Gamma\backslash \mathfrak{H}}\left((E_{\alpha})^2-  E_{2\alpha}-2c_{\alpha}E_1^*-c_{\alpha}^2E_{2-2\alpha}+ \frac{\pi}{3}C_{\alpha}\right)\cdot \overline{ f}\, \frac{dx\,dy}{y^2}$$
$$=\int_{\Gamma\backslash \mathfrak{H}}(E_{\alpha})^2\cdot\overline{ f}-  E_{2\alpha}\cdot\overline{ f}-2c_{\alpha}E_1^*\cdot\overline{ f}-c_{\alpha}^2E_{2-2\alpha}\cdot\overline{ f}+ \frac{\pi}{3}C_{\alpha}\cdot\overline{ f}\, \frac{dx\,dy}{y^2}$$

\tab  In each of the above cases, we can use Lemma \ref{EaEbf} to evaluate the integral of the first term and see that each of the remaining terms will integrate to be zero using Lemma \ref{RS=0}.  Thus for each $S$, we get
$$ \langle S,f\rangle_{L^2}=  L(\alpha,\overline{f} \times E_{\beta})\cdot 
\frac{\pi^{\beta+\overline{s}-\alpha}}{2\,\Gamma(\beta)\Gamma(\overline{s})}\cdot
\frac{\Gamma(\frac{\alpha+\beta -\overline{s}}{2})\Gamma(\frac{\alpha-\beta +\overline{s}}{2})\Gamma(\frac{\alpha+1-\beta -\overline{s}}{2})\Gamma(\frac{\alpha-1+\beta +\overline{s}}{2})}{\Gamma(\alpha)}$$
$$=\Lambda(\alpha,\overline{f} \times E_{\beta})$$\\

%------------------------------------------------------
%------------------------------------------------------
\section{The Continuous Spectrum}\label{cont}
%------------------------------------------------------

\tab We want to compute $$\int_{(1/2)}\langle S,E_s\rangle \cdot E_s\,ds$$ for each case of $S$.  Though we have designed $S$ so that $S\in L^2(\Gamma\backslash\h)$, there is no guarantee that  $S\cdot \overline{E_s}$ is in $L^1(\Gamma\backslash\h)$.
However, observe that on $\text{Re}(s)=1/2$,  $\langle S,E_s\rangle$ exists as a literal integral since $S$ is $\mathcal{O}(y^{\frac{1}{2}-\epsilon})$ for some $\epsilon>0$. This can be seen by observing that $E_{s}=y^{s}+c_{s} y^{1-s}+R_{s}$ where $R_{s}$ is rapidly decreasing and so $\overline{E}_s\cdot S$ is $\mathcal{O}(y^{1-\epsilon})$.  Thus $$\displaystyle \int_{\Gamma\backslash \h}  \overline{E}_s\cdot  S\,\frac{dy\,dx}{y^2}<\infty.$$

Futhermore, in what follows we will show
{\thm\label{contthm} For each $\alpha,\beta\in \mathcal{C}$, 
$$\langle S, E_s\rangle_{L^2} =
L(\overline{s},E_{\alpha}\times E_{\alpha})\cdot \int_0^\infty y^{{\overline{s}}}\cdot W_\alpha(y) W_\alpha(y) \,\frac{dy}{y^2} = \Lambda(\overline{s},E_{\alpha}\times E_{\alpha})$$ for each $S$ given in Theorem \ref{existencethm}.}\\

 \tab Knowing that these integrals converge and computing them directly are two different things.  In the style of Zagier \cite{Zagier1982}  and Casselman \cite{Casselman1993}, we will use Arthur truncation to compute these spectral integrals.  To make proper use of the truncated Eisenstein series, we will also  need that the limit of these truncated Eisenstein series converges to the Eisenstein series itself. \\

\subsection{Convergence of Truncated Eisenstein Series}

 Recall that Arthur truncation is defined as $$\displaystyle\wedge^T\! E_s:= E_s - \sum_{\gamma\in P\backslash \Gamma}\tau_s(\gamma z)\phantom{wee}\text{ where }\phantom{wee}\tau_s(z)= \left\{
     \begin{array}{lr}
       y^s+c_s y^{1-s} & y\geq T\\
       0 & y<T
     \end{array}
   \right..$$  
 For convenience we will label the sum $\displaystyle\Theta^T_s(z):=\sum_{\gamma\in P\backslash \Gamma}\tau_s(\gamma z)$ so that $\displaystyle\wedge^T\! E_s:= E_s -\Theta^T_s(z)$.\\

%Instead, we will compute $\langle S,\wedge^T E_s\rangle$  and take the limit as $T$ goes to $\infty$ with a suitable topology 

\tab Let $\displaystyle\Psi_{\epsilon}(z):=\sum_{\gamma\in P\backslash \Gamma}\varphi_{\epsilon}\left(\text{Im}(\gamma z)\right)$ be the Eisenstein series where $\varphi_{\epsilon}(y)= \left\{
     \begin{array}{lr}
       y^{\epsilon} & y>1\\
       0 & y<1
     \end{array}
   \right.$ and define $$\mathcal{B}_{\epsilon}^k:=\{ f\in L^2(\Gamma\backslash\h)~|~ \langle (1+ |\Psi_{\epsilon}|)^k f, f\rangle_{L^2} <\infty\}$$ for $ k\in\mathbb{Z}$ with norm
$|f|^2_{\mathcal{B}_{\epsilon}^k}=\langle(1+ |\Psi_{\epsilon}|)^k f, f\rangle$.
Let $\mathcal{B}_{\epsilon}^{-k}$ be the dual to $\mathcal{B}_{\epsilon}^k$ for each $k$. 

%-----
{\lem For some $\epsilon>0$, $S\in \mathcal{B}_{\epsilon}^{1}$.}

\begin{proof} Recall that $S$ is in $L^2(\Gamma\backslash\h)$ by design and in fact by examining the construction of each $S$ we see that $S$ is $\mathcal{O}(y^{\frac{1}{2}-\epsilon})$.  By design,  $|\Psi_{\epsilon}|\cdot S\cdot \overline{S}$ is $\mathcal{O}(y^{1-\epsilon})$ and 
 $\displaystyle\langle (1+ |\Psi_{\epsilon}|) S, S\rangle_{L^2} <\infty$ as desired.
\end{proof}
%-----
{\lem Given $\epsilon>0$  and $s$ with $\text{Re}(s)=1/2$, both $E_s$ and $\wedge^T\!E_s$ are in $\mathcal{B}_{\epsilon}^{-1}$.}

%-----
\begin{proof} Let $s$ be such that $\text{Re}(s)=1/2$.  In the cases of 
$\displaystyle\langle (1+|\Psi_{1+\epsilon}(z)|)^{-1}\cdot  E_{s},E_s\rangle_{L^2}$ and 
$\displaystyle\langle(1+|\Psi_{1+\epsilon}(z)|)^{-1}\cdot \wedge ^T E_{s}, \wedge ^T E_s\rangle_{L^2}$, both integrands are of order  $\mathcal{O}(y^{1-\epsilon})$ since $ E_{s}$ and $\wedge ^T E_{s}$ are $\mathcal{O}(y^{1/2})$.  When integrated against the measure $\frac{dy}{y^2}$, these integrals will converge.\\
\end{proof}

\tab Now we must show that the limit of the truncated Eisenstein series approaches the original Eisenstein series in this topology.\\

%-----key lemma
{\lem\label{keylem}  Given $\epsilon>0$  and $s$ with $\text{Re}(s)=1/2$, we have $\displaystyle \mathcal{B}_{\epsilon}^{-1}\!\!-\!\lim_T \wedge^T E_s= E_s$.}
%-----

\begin{proof} Consider $E_s$ where $\text{Re}(s)=1/2$ and $\sigma_o>\text{Re}(s)$. We have
$$|\wedge^TE_s|^2_{\mathcal{B}^{-1}_{\epsilon}}
=\left\langle \frac{1}{1+|\Psi_{1+\epsilon}|}\cdot \wedge ^T E_{s}, \wedge ^T E_s\right\rangle_{L^2}
=\int_{\Gamma\backslash \mathfrak{H}}\frac{1}{1+|\Psi_{1+\epsilon}|}\wedge ^T \!E_{s}\cdot \wedge ^T \!E_{\overline{s}}\,\frac{dy\,dx}{y^2} $$
$$=\int_{\Gamma\backslash \mathfrak{H}}\frac{1}{1+|\Psi_{1+\epsilon}|}  \left( E_{s}\, -\Theta^T_s(z)\right)\cdot  \left( E_{\overline{s}}\, -\Theta^T_{\overline{s}}(z)\right)\,\frac{dy\,dx}{y^2} $$
$$=\int_0^{\infty}\int_{\substack{|x|\leq 1/2 \\ x^2\geq 1-y^2}}\frac{1}{1+|\Psi_{1+\epsilon}|}  \left( E_{s}\, -\Theta^T_s(z)\right)\cdot  \left( E_{\overline{s}}\, -\Theta^T_{\overline{s}}(z)\right)\,\frac{dy\,dx}{y^2} $$
$$=\int_0^T\int_{\substack{|x|\leq 1/2 \\ x^2\geq 1-y^2}}  \frac{1}{1+|\Psi_{1+\epsilon}|}\cdot E_{s}\cdot E_{\overline{s}}\,\frac{dx\,dy}{y^2}\phantom{weeeeeeeeeeeeeeeeeeeee}$$
$$\phantom{weeeeeeeeeeeeeeeeee}+\int_T^{\infty}\int_{\substack{|x|\leq 1/2 \\ x^2\geq 1-y^2}}\frac{1}{1+|\Psi_{1+\epsilon}|}  \left( E_{s}\, -(y^s+c_sy^{1-s})\right)\cdot  \left( E_{\overline{s}}\,-(y^{\overline{s}}+c_{\overline{s}}y^{1-\overline{s}})\right)\,\frac{dy\,dx}{y^2} 
$$

Now 
$\displaystyle\frac{1}{1+|\Psi_{1+\epsilon}|}  \left( E_{s}\, -y^s-c_sy^{1-s}\right)\cdot(E_{\overline{s}}\,-y^{\overline{s}}-c_{\overline{s}}y^{1-\overline{s}})< \frac{1}{E_{\sigma_o}} E_sE_{\overline{s}}\in L^2$ thus by Lebesgue's Convergence Theorem as $T\to \infty$, the second integral disappears and this becomes 
$$\int_0^{\infty}\int_{\substack{|x|\leq 1/2 \\ x^2\geq 1-y^2}}  \frac{1}{1+|\Psi_{1+\epsilon}|}\cdot E_{s}\cdot E_{\overline{s}}\,\frac{dx\,dy}{y^2}
=\left\langle \frac{1}{1+|\Psi_{1+\epsilon}|}\cdot E_{s}, E_s\right\rangle_{L^2}
=  |E_s|^2_{\mathcal{B}^{-1}_{\epsilon}}$$\\

 \end{proof}

\subsection{Integrals of Truncated Eisenstein series}
It remains to compute $$\displaystyle\int_{\Gamma\backslash \h}\wedge^T \,\overline{E}_s\cdot  S\,\frac{dy\,dx}{y^2} $$ for each $S$. \\

\tab We have that each $S\in L^2$.  However, we will need something a bit stronger to actually compute $\langle S,\wedge^T E_s\rangle_{L^2}$.  For instance, we may know that $\displaystyle \int_{\Gamma\backslash \h}  \wedge^T \,\overline{E_s}\cdot  S\,\frac{dy\,dx}{y^2}<\infty$ but since each $S$ involved many terms that we would like to be able to separate and compute, we need that each integral exists term-wise.  We will need a few results to address each of the terms for each $S$.\\

 \tab  In order to compute term-wise, truncated pairings we will need the following three results.  Lemma \ref{EaEb} will give us the first for the part of $S$ which consists of $E_{\alpha}E_{\beta}$ paired against $\wedge^T \,\overline{E}_s$.  Lemma \ref{intid} and Theorem \ref{MS} will allow us to compute the integrals corresponding to the part of $S$ which consists of linear combinations of Eisenstein series.\\

%-----
%-----

{\lem\label{EaEb} For $\alpha,\beta\neq 1$, 

$$\displaystyle\int_{\Gamma\backslash \h}\wedge^T \,\overline{E}_s\cdot  E_{\alpha}E_{\beta}\,\frac{dy\,dx}{y^2}$$

$$=\displaystyle
\frac{1}{\overline{s}+\alpha+\beta-1}T^{{\overline{s}+\alpha+\beta-1}}
+\frac{c_{\alpha}}{\overline{s}-\alpha+\beta} T^{\overline{s}-\alpha+\beta}
+\frac{c_{\beta}}{\overline{s}+\alpha-\beta} T^{\overline{s}+\alpha-\beta}
+\frac{c_{\alpha}c_{\beta}}{\overline{s}-\alpha-\beta+1}T^{\overline{s}-\alpha-\beta+1} $$

$$\displaystyle+L(\overline{s},E_{\alpha}\times E_{\beta})\cdot \int_{ y\leq T}y^{{\overline{s}}}\cdot W_\alpha(y) W_\beta(y) \,\frac{dy}{y^2}$$

$$\displaystyle+\frac{c_{\overline{s}} }{-{\overline{s}}+\alpha+\beta}\,T^{-{\overline{s}}+\alpha+\beta}  
+\frac{c_{\alpha}c_{\overline{s}} }{1-{\overline{s}}-\alpha+\beta}\,T^{1-{\overline{s}}-\alpha+\beta}  
+\frac{c_{\beta}c_{\overline{s}}}{1-{\overline{s}}+\alpha-\beta}\, T^{1-{\overline{s}}+\alpha-\beta}  
+\frac{c_{\alpha}c_{\beta}c_{\overline{s}}}{2-{\overline{s}}-\alpha-\beta}\, T^{2-{\overline{s}}-\alpha-\beta}$$

$$\displaystyle-c_{\overline{s}}\sum_{n\neq 0}\varphi(n,\alpha)\varphi(n,\beta)n^{\overline{s}}\int_{ y\geq T} y^{-1-{\overline{s}}}\cdot W_\alpha(y) W_\beta(y)\,dy$$\\}

\begin{proof}

In the first term, we will compute $\displaystyle\int_{\Gamma\backslash \h}\wedge^T \,\overline{E}_s\cdot  E_{\alpha}E_{\beta}\,\frac{dy\,dx}{y^2}$:
%-----
 
 $$ \int_{\Gamma\backslash \h}\wedge^T \,\overline{E}_s\cdot  E_{\alpha}E_{\beta}\,\frac{dy\,dx}{y^2} 
 = \int_{\Gamma\backslash \h}\left(\sum_{\gamma\in P\backslash \Gamma } \text{Im}(\gamma z)^{\overline{s}}-\sum_{\gamma\in P\backslash \Gamma } \tau_{\overline{s}}(\gamma z)\right)\cdot  E_{\alpha}E_{\beta}\,\frac{dy\,dx}{y^2} $$
 $$ = \int_{\Gamma\backslash \h}\sum_{\gamma\in P\backslash \Gamma } \left(\text{Im}(\gamma z)^{\overline{s}}-\ \tau_{\overline{s}}(\gamma z)\right)\cdot  E_{\alpha}E_{\beta}\,\frac{dy\,dx}{y^2} 
 =\int_{P\backslash \h} \left( y^{\overline{s}}-\ \tau_{\overline{s}}( z)\right)\cdot  E_{\alpha}E_{\beta}\,\frac{dy\,dx}{y^2} $$ by unwinding

 $$=\int_{\substack{P\backslash \h \\ y\leq T}} \left( y^{\overline{s}}-\ \tau_{\overline{s}}( z)\right)\cdot  E_{\alpha}E_{\beta}\,\frac{dy\,dx}{y^2}+\int_{\substack{P\backslash \h \\ y> T}} \left( y^{\overline{s}}-\ \tau_{\overline{s}}( z)\right)\cdot  E_{\alpha}E_{\beta}\,\frac{dy\,dx}{y^2}  $$ 
 
 $$=\int_{\substack{P\backslash \h \\ y\leq T}}y^{\overline{s}}\cdot  E_{\alpha}E_{\beta}\,\frac{dy\,dx}{y^2}-\int_{\substack{P\backslash \h \\ y> T}} c_{\overline{s}} y^{1-{\overline{s}}}\cdot  E_{\alpha}E_{\beta}\,\frac{dy\,dx}{y^2}  $$   \\ 

 %-----
(A)  Examining $\displaystyle\int_{\substack{P\backslash \h \\ y\leq T}}y^{\overline{s}}\cdot  E_{\alpha}E_{\beta}\,\frac{dy\,dx}{y^2}$: 
 
 \tab Recall that the fundamental domain of $P\backslash \mathfrak{H}$ is $\{z=x+iy\in\mathfrak{H}~|~ 0\leq x\leq 1\}$ so we have
 $$\int_{\substack{P\backslash \h \\ y\leq T}}y^{\overline{s}}\cdot  E_{\alpha}E_{\beta}\,\frac{dy\,dx}{y^2}
 =\int_0^1\int_{ y\leq T}y^{{\overline{s}}-2}\cdot  E_{\alpha}E_{\beta}\,dy\,dx$$
 $$ =\int_0^1\int_{ y\leq T}y^{{\overline{s}}-2}
 [y^{\alpha} +c_{\alpha} y^{1-\alpha} +\sum_{n\neq 0}\varphi(n,\alpha)W_\alpha(|n|y) e^{2\pi i n x} ][y^{\beta} +c_{\beta} y^{1-\beta} +\sum_{m\neq 0}\varphi(m,\beta)W_\beta(|m|y)e^{2\pi i m x}  ]\,dy\,dx$$
  $$ =\int_{ y\leq T}y^{{\overline{s}}-2}(y^{\alpha} +c_{\alpha} y^{1-\alpha})(y^{\beta} +c_{\beta} y^{1-\beta}) +y^{{\overline{s}}-2}\sum_{n\neq 0}\varphi(n,\alpha)\varphi(n,\beta)\cdot W_\alpha(|n|y) W_\beta(|n|y)\,dy$$
 \hfill since the product vanishes off the diagonal.\\

(1)  Examining the first term   $\displaystyle\int_{ y\leq T}y^{{\overline{s}}-2}(y^{\alpha} +c_{\alpha} y^{1-\alpha})(y^{\beta} +c_{\beta} y^{1-\beta}) \,dy$:

 $$\int_{ y\leq T}y^{{\overline{s}}-2}(y^{\alpha} +c_{\alpha} y^{1-\alpha})(y^{\beta} +c_{\beta} y^{1-\beta})\,dy
  =\int_{ y\leq T}y^{{\overline{s}}-2}(y^{\alpha+\beta} +c_{\alpha} y^{1-\alpha+\beta}+c_{\beta} y^{1+\alpha-\beta}+c_{\alpha}c_{\beta}y^{2-\alpha-\beta}) \,dy
$$
$$ =\int_{ y\leq T}
y^{\overline{s}+\alpha+\beta-2}
+c_{\alpha} y^{\overline{s}-\alpha+\beta-1}
+c_{\beta} y^{\overline{s}+\alpha-\beta-1}
+c_{\alpha}c_{\beta}y^{\overline{s}-\alpha-\beta} \,dy$$
$$ =
\frac{1}{\overline{s}+\alpha+\beta-1}y^{{\overline{s}+\alpha+\beta-1}}
+\frac{c_{\alpha}}{\overline{s}-\alpha+\beta} y^{\overline{s}-\alpha+\beta}
+\frac{c_{\beta}}{\overline{s}+\alpha-\beta} y^{\overline{s}+\alpha-\beta}
+\frac{c_{\alpha}c_{\beta}}{\overline{s}-\alpha-\beta+1}y^{\overline{s}-\alpha-\beta+1}\Big|_{y=0}^T 
$$
$$ =
\frac{1}{\overline{s}+\alpha+\beta-1}T^{{\overline{s}+\alpha+\beta-1}}
+\frac{c_{\alpha}}{\overline{s}-\alpha+\beta} T^{\overline{s}-\alpha+\beta}
+\frac{c_{\beta}}{\overline{s}+\alpha-\beta} T^{\overline{s}+\alpha-\beta}
+\frac{c_{\alpha}c_{\beta}}{\overline{s}-\alpha-\beta+1}T^{\overline{s}-\alpha-\beta+1} $$
$$-
\lim_{t\to 0^+}
\left(\frac{1}{\overline{s}+\alpha+\beta-1}t^{{\overline{s}+\alpha+\beta-1}}
+\frac{c_{\alpha}}{\overline{s}-\alpha+\beta} t^{\overline{s}-\alpha+\beta}
+\frac{c_{\beta}}{\overline{s}+\alpha-\beta} t^{\overline{s}+\alpha-\beta}
+\frac{c_{\alpha}c_{\beta}}{\overline{s}-\alpha-\beta+1}t^{\overline{s}-\alpha-\beta+1} \right)
$$\\

For $\text{Re}(\overline{s})>\text{Re}(\alpha+\beta)>1$ where $\text{Re}(\alpha)>0$ and $\text{Re}(\beta)>0$, this last term $$
\lim_{t\to 0^+}
\left(\frac{1}{\overline{s}+\alpha+\beta-1}t^{{\overline{s}+\alpha+\beta-1}}
+\frac{c_{\alpha}}{\overline{s}-\alpha+\beta} t^{\overline{s}-\alpha+\beta}
+\frac{c_{\beta}}{\overline{s}+\alpha-\beta} t^{\overline{s}+\alpha-\beta}
+\frac{c_{\alpha}c_{\beta}}{\overline{s}-\alpha-\beta+1}t^{\overline{s}-\alpha-\beta+1} \right)$$
$$=0$$

Thus, by the Identity Principle, we can meromorphically continue to get that 
$$\displaystyle\int_{ y\leq T}y^{{\overline{s}}-2}(y^{\alpha} +c_{\alpha} y^{1-\alpha})(y^{\beta} +c_{\beta} y^{1-\beta}) \,dy$$
$$ =
\frac{1}{\overline{s}+\alpha+\beta-1}T^{{\overline{s}+\alpha+\beta-1}}
+\frac{c_{\alpha}}{\overline{s}-\alpha+\beta} T^{\overline{s}-\alpha+\beta}
+\frac{c_{\beta}}{\overline{s}+\alpha-\beta} T^{\overline{s}+\alpha-\beta}
+\frac{c_{\alpha}c_{\beta}}{\overline{s}-\alpha-\beta+1}T^{\overline{s}-\alpha-\beta+1} $$\\

(2)  Examining the second term   $\displaystyle\int_{ y\leq T}y^{{\overline{s}}-2}\sum_{n\neq 0}\varphi(n,\alpha)\varphi(n,\beta)\cdot W_\alpha(|n|y) W_\beta(|n|y) \,dy$:
 $$\int_{ y\leq T}y^{{\overline{s}}-2}\sum_{n\neq 0}\varphi(n,\alpha)\varphi(n,\beta)\cdot W_\alpha(|n|y) W_\beta(|n|y) \,dy
 =\sum_{n\neq 0}\varphi(n,\alpha)\varphi(n,\beta)\int_{ y\leq T}y^{{\overline{s}}}\cdot W_\alpha(|n|y) W_\beta(|n|y) \,\frac{dy}{y^2}$$
replacing $y$ by $y/n$ we have 
$$ =\sum_{n\neq 0}\frac{\varphi(n,\alpha)\varphi(n,\beta)}{n^{\overline{s}-1}}\int_{ y\leq T}y^{{\overline{s}}}\cdot W_\alpha(y) W_\beta(y) \,\frac{dy}{y^2} 
=L(\overline{s},E_{\alpha}\times E_{\beta})\cdot \int_{ y\leq T}y^{{\overline{s}}}\cdot W_\alpha(y) W_\beta(y) \,\frac{dy}{y^2}$$

 \vspace{.5cm}

  %-----
(B)  Examining $\displaystyle{\int_{\substack{P\backslash \h \\ y> T}} c_{\overline{s}} y^{1-{\overline{s}}}\cdot  E_{\alpha}E_{\beta}\,\frac{dy\,dx}{y^2}  }$: 

Recall that the fundamental domain of $P\backslash \mathfrak{H}$ is $\{z=x+iy\in\mathfrak{H}~|~ 0\leq x\leq 1\}$ so we have
$$\int_{\substack{P\backslash \h \\ y> T}} c_{\overline{s}} y^{1-{\overline{s}}}\cdot  E_{\alpha}E_{\beta}\,\frac{dy\,dx}{y^2}  =\int_0^1\int_{y\geq T} c_{\overline{s}} y^{1-{\overline{s}}}\cdot  E_{\alpha}E_{\beta}\,\frac{dy}{y^2}\, dx  $$

$$=  \int_0^1\int_{y\geq T} c_{\overline{s}} y^{1-{\overline{s}}}\left(y^{\alpha} +c_{\alpha} y^{1-\alpha} +\sum_{n\neq 0}\varphi(n,\alpha) W_\alpha(|n|y) e^{2\pi i nx}\right)\phantom{weeeeeeeeeeeeeeee}$$
$$\phantom{weeeeeeeeeeeeeeeeeeee}\cdot \left(y^{\beta} +c_{\beta} y^{1-\beta} +\sum_{m\neq 0}\varphi(m,\beta)W_\beta(|m|y)e^{2\pi i mx} \right)\,\frac{dy\, dx}{y^2} $$
  $$ =\int_{ y\geq T}c_{\overline{s}} y^{-1-{\overline{s}}}(y^{\alpha} +c_{\alpha} y^{1-\alpha})(y^{\beta} +c_{\beta} y^{1-\beta}) +c_{\overline{s}} y^{-1-{\overline{s}}}\sum_{n\neq 0}\varphi(n,\alpha)\varphi(n,\beta)\cdot W_\alpha(|n|y) W_\beta(|n|y)\,dy$$
  since the product vanishes off the diagonal.\\
  
(1)  Examining the first term $\displaystyle\int_{ y\geq T}c_{\overline{s}} y^{-1-{\overline{s}}}(y^{\alpha} +c_{\alpha} y^{1-\alpha})(y^{\beta} +c_{\beta} y^{1-\beta})\,dy$:

$$\int_{ y\geq T}c_{\overline{s}} y^{-1-{\overline{s}}}(y^{\alpha} +c_{\alpha} y^{1-\alpha})(y^{\beta} +c_{\beta} y^{1-\beta})\,dy$$
$$=\int_{ y\geq T}c_{\overline{s}} \,y^{-1-{\overline{s}}+\alpha+\beta}  
+c_{\alpha}c_{\overline{s}} \,y^{-{\overline{s}}-\alpha+\beta}  
+c_{\beta}c_{\overline{s}}\, y^{-{\overline{s}}+\alpha-\beta}  
+c_{\alpha}c_{\beta}c_{\overline{s}}\, y^{1-{\overline{s}}-\alpha-\beta}  
\,dy$$
$$=\Big(
\frac{c_{\overline{s}} }{-{\overline{s}}+\alpha+\beta}\,y^{-{\overline{s}}+\alpha+\beta}  
+\frac{c_{\alpha}c_{\overline{s}} }{1-{\overline{s}}-\alpha+\beta}\,y^{1-{\overline{s}}-\alpha+\beta}  
+\frac{c_{\beta}c_{\overline{s}}}{1-{\overline{s}}+\alpha-\beta}\, y^{1-{\overline{s}}+\alpha-\beta} $$ 
$$\phantom{weeeeeeeeeeeeeeeeeeeeeeeeeeeeeeeeeeeeeeeeeeeeeeeeeeeeeeeeeee}+\frac{c_{\alpha}c_{\beta}c_{\overline{s}}}{2-{\overline{s}}-\alpha-\beta}\, y^{2-{\overline{s}}-\alpha-\beta}\Big)  \,
\Big|_{y=T}^{\infty}$$
$$= \lim_{t\to\infty}\Big[\big(
\frac{c_{\overline{s}} }{-{\overline{s}}+\alpha+\beta}\,t^{-{\overline{s}}+\alpha+\beta}  
+\frac{c_{\alpha}c_{\overline{s}} }{1-{\overline{s}}-\alpha+\beta}\,t^{1-{\overline{s}}-\alpha+\beta}  
+\frac{c_{\beta}c_{\overline{s}}}{1-{\overline{s}}+\alpha-\beta}\, t^{1-{\overline{s}}+\alpha-\beta}  $$ 
$$\phantom{weeeeeeeeeeeeeeeeeeeeeeeeeeeeeeeeeeeeeeeeeeeeeeeeeeeeeeeeeeeeee}
+\frac{c_{\alpha}c_{\beta}c_{\overline{s}}}{2-{\overline{s}}-\alpha-\beta}\, t^{2-{\overline{s}}-\alpha-\beta}  \Big)$$
$$\phantom{weeeeeeeeee}-\Big(\frac{c_{\overline{s}} }{-{\overline{s}}+\alpha+\beta}\,T^{-{\overline{s}}+\alpha+\beta}  
+\frac{c_{\alpha}c_{\overline{s}} }{1-{\overline{s}}-\alpha+\beta}\,T^{1-{\overline{s}}-\alpha+\beta}  
+\frac{c_{\beta}c_{\overline{s}}}{1-{\overline{s}}+\alpha-\beta}\, T^{1-{\overline{s}}+\alpha-\beta}  $$
$$\phantom{weeeeeeeeeeeeeeeeeeeeeeeeeeeeeeeeeeeeeeeeeeeeeeeeeeeeeeeeeeeeee}+\frac{c_{\alpha}c_{\beta}c_{\overline{s}}}{2-{\overline{s}}-\alpha-\beta}\, T^{2-{\overline{s}}-\alpha-\beta}  \big)\Big]
$$

For $\text{Re}(\overline{s})>\text{Re}(\alpha+\beta)>1$ where $\text{Re}(\alpha)>1/2$ and $\text{Re}(\beta)>1/2$ the first term
$$\lim_{t\to\infty}\big(
\frac{c_{\overline{s}} }{-{\overline{s}}+\alpha+\beta}\,t^{-{\overline{s}}+\alpha+\beta}  
+\frac{c_{\alpha}c_{\overline{s}} }{1-{\overline{s}}-\alpha+\beta}\,t^{1-{\overline{s}}-\alpha+\beta}  
+\frac{c_{\beta}c_{\overline{s}}}{1-{\overline{s}}+\alpha-\beta}\, t^{1-{\overline{s}}+\alpha-\beta}  
$$
$$\phantom{weeeeeeeeeeeeeeeeeeeeeeeeeeeeeeeeeeeeeeeeeeeeeeeeeee}+\frac{c_{\alpha}c_{\beta}c_{\overline{s}}}{2-{\overline{s}}-\alpha-\beta}\, t^{2-{\overline{s}}-\alpha-\beta}  \big)=0$$

Thus, by the Identity Principle, we can meromorphically continue to get that 
$$\displaystyle\int_{ y\geq T}c_{\overline{s}} y^{-1-{\overline{s}}}(y^{\alpha} +c_{\alpha} y^{1-\alpha})(y^{\beta} +c_{\beta} y^{1-\beta})\,dy$$

$$=-
\Big(\frac{c_{\overline{s}} }{-{\overline{s}}+\alpha+\beta}\,T^{-{\overline{s}}+\alpha+\beta}  
+\frac{c_{\alpha}c_{\overline{s}} }{1-{\overline{s}}-\alpha+\beta}\,T^{1-{\overline{s}}-\alpha+\beta}  
+\frac{c_{\beta}c_{\overline{s}}}{1-{\overline{s}}+\alpha-\beta}\, T^{1-{\overline{s}}+\alpha-\beta}  $$
$$\phantom{weeeeeeeeeeeeeeeeeeeeeeeeeeeeeeeeeeeeeeeeeeeeeeeeeeeeeeeeeeeeee}+\frac{c_{\alpha}c_{\beta}c_{\overline{s}}}{2-{\overline{s}}-\alpha-\beta}\, T^{2-{\overline{s}}-\alpha-\beta}  \Big)
$$\\

(2)  Examining the second term $\displaystyle\int_{ y\geq T}c_{\overline{s}} y^{-1-{\overline{s}}}\sum_{n\neq 0}\varphi(n,\alpha)\varphi(n,\beta)\cdot W_\alpha(|n|y) W_\beta(|n|y)\,dy$:

$$\displaystyle\int_{ y\geq T}c_{\overline{s}} y^{-1-{\overline{s}}}\sum_{n\neq 0}\varphi(n,\alpha)\varphi(n,\beta)\cdot W_\alpha(|n|y) W_\beta(|n|y)\,dy$$

$$=\int_{ y\geq T}c_{\overline{s}} y^{-1-{\overline{s}}}\sum_{n\neq 0}\varphi(n,\alpha)\varphi(n,\beta)\cdot W_\alpha(|n|y) W_\beta(|n|y)\,dy$$

$$=\sum_{n\neq 0}\varphi(n,\alpha)\varphi(n,\beta)\int_{ y\geq T}c_{\overline{s}} y^{-1-{\overline{s}}}\cdot W_\alpha(|n|y) W_\beta(|n|y)\,dy$$

replacing $y$ by $y/n$ we have 

$$=c_{\overline{s}}\sum_{n\neq 0}\varphi(n,\alpha)\varphi(n,\beta)n^{\overline{s}}\int_{ y\geq T} y^{-1-{\overline{s}}}\cdot W_\alpha(y) W_\beta(y)\,dy$$\\

Putting (A) and (B) together, we get: 
$$\displaystyle\int_{\Gamma\backslash \h}\wedge^T \,\overline{E}_s\cdot  E_{\alpha}E_{\beta}\,\frac{dy\,dx}{y^2}$$
$$=
\frac{1}{\overline{s}+\alpha+\beta-1}T^{{\overline{s}+\alpha+\beta-1}}
+\frac{c_{\alpha}}{\overline{s}-\alpha+\beta} T^{\overline{s}-\alpha+\beta}
+\frac{c_{\beta}}{\overline{s}+\alpha-\beta} T^{\overline{s}+\alpha-\beta}
+\frac{c_{\alpha}c_{\beta}}{\overline{s}-\alpha-\beta+1}T^{\overline{s}-\alpha-\beta+1} $$

$$+L(\overline{s},E_{\alpha}\times E_{\beta})\cdot \int_{ y\leq T}y^{{\overline{s}}}\cdot W_\alpha(y) W_\beta(y) \,\frac{dy}{y^2}$$

$$+\frac{c_{\overline{s}} }{-{\overline{s}}+\alpha+\beta}\,T^{-{\overline{s}}+\alpha+\beta}  
+\frac{c_{\alpha}c_{\overline{s}} }{1-{\overline{s}}-\alpha+\beta}\,T^{1-{\overline{s}}-\alpha+\beta}  
+\frac{c_{\beta}c_{\overline{s}}}{1-{\overline{s}}+\alpha-\beta}\, T^{1-{\overline{s}}+\alpha-\beta}  
+\frac{c_{\alpha}c_{\beta}c_{\overline{s}}}{2-{\overline{s}}-\alpha-\beta}\, T^{2-{\overline{s}}-\alpha-\beta}  $$

$$-c_{\overline{s}}\sum_{n\neq 0}\varphi(n,\alpha)\varphi(n,\beta)n^{\overline{s}}\int_{ y\geq T} y^{-1-{\overline{s}}}\cdot W_\alpha(y) W_\beta(y)\,dy$$ \end{proof}

 %-----------
 %-----------

 \tab  We will also need the following two results for the parts of $S$ which consist of linear combinations of Eisenstein series.

%-----intid lemma
{\lem\label{intid} $\displaystyle \int_{\Gamma\backslash \h}  \wedge^T \overline{E}_s  \cdot \wedge^T E_r\,\frac{dy\,dx}{y^2}= \int_{\Gamma\backslash \h}  \wedge^T \overline{E}_s\cdot  E_r\,\frac{dy\,dx}{y^2}$}
%-----
\begin{proof}  Recall that the fundamental domain for  $\Gamma\backslash \h$ is  $F=\{z\in\h~|~|z|\geq 1~\&~|\text{Re}(z)|\leq 1/2\}$ so rewriting our integral we have 
  $$\displaystyle \int_{\Gamma\backslash \h}  \wedge^T \,\overline{E}_s  \cdot \wedge^T E_r\,\frac{dx\,dy}{y^2}
  = \int_{0\leq y\leq\infty}\int_{\substack{|x|\leq 1/2 \\ x^2\geq 1-y^2}}\wedge^T \,\overline{E}_s  \cdot \wedge^T E_r\,\frac{dx\,dy}{y^2}$$
  $$=\int_{0\leq y\leq T}\int_{\substack{|x|\leq 1/2 \\ x^2\geq 1-y^2}}\wedge^T \,\overline{E}_s  \cdot \wedge^T E_r\,\frac{dx\,dy}{y^2} + \int_{T\leq y\leq\infty}\int_{\substack{|x|\leq 1/2 \\ x^2\geq 1-y^2}}\wedge^T \,\overline{E}_s  \cdot \wedge^T E_r\,\frac{dx\,dy}{y^2}$$ 
\tab  Notice that since the first integral is only defined for $y\leq T$ and on this region, $ \wedge^T E_r=  E_r$ by definition, 
  $$\int_{0\leq y\leq T}\int_{\substack{|x|\leq 1/2 \\ x^2\geq 1-y^2}}\wedge^T \,\overline{E}_s  \cdot \wedge^T E_r\,\frac{dx\,dy}{y^2} 
  =\int_{0\leq y\leq T}\int_{\substack{|x|\leq 1/2 \\ x^2\geq 1-y^2}}\wedge^T \,\overline{E}_s  \cdot  E_r\,\frac{dx\,dy}{y^2} $$
  Thus it remains to show this result for the second integral 
  $\displaystyle\int_{T\leq y\leq\infty}\int_{\substack{|x|\leq 1/2 \\ x^2\geq 1-y^2}}\wedge^T \,\overline{E}_s  \cdot \wedge^T E_r\,\frac{dx\,dy}{y^2}$. 
  
\tab   For $T>1$, this domain of integration is a cylinder so that 
   $$\displaystyle\int_{T\leq y\leq\infty}\int_{\substack{|x|\leq 1/2 \\ x^2\geq 1-y^2}}\wedge^T \,\overline{E}_s  \cdot \wedge^T E_r\,\frac{dx\,dy}{y^2}
   = \int_{T\leq y\leq\infty}\int_{|x|\leq 1/2 }\wedge^T \,\overline{E}_s  \cdot \wedge^T E_r\,\frac{dx\,dy}{y^2}$$  
   Writing this in terms of Fourier expansions, we have 
$$= \int_{T\leq y\leq\infty}\int_{|x|\leq 1/2 }\left(\sum_{n\neq 0}\varphi(n, s)\overline{W}_s(|n|y)  e^{-2\pi i n x} \right) \cdot \left(\sum_{m\neq 0}\varphi(m, s)W_s(|m|y)  e^{2\pi i m x} \right)\,\frac{dx\,dy}{y^2}$$  
$$= \int_{T\leq y\leq\infty}\int_{|x|\leq 1/2 }\left(\sum_{n\neq 0}\varphi(n, s)\overline{W}_s(|n|y) e^{-2\pi i n x}  \right) \cdot \left(\sum_{m\in \mathbb{Z}}\varphi(m, s)W_s(|m|y)  e^{2\pi i m x} \right)\,\frac{dx\,dy}{y^2}$$ since the integral will be zero when $n\neq m$ i.e. when $m=0$ (this computation was seen previously as $\int_0^1e^{2\pi i (-m)x}dx=\delta_{0,m}$ and the 0$^{th}$ coefficient of the first Eisenstein series has been truncated to be made 0)
   $$= \int_{T\leq y\leq\infty}\int_{|x|\leq 1/2 }\wedge^T \,\overline{E}_s  \cdot  E_r\,\frac{dx\,dy}{y^2}$$  as desired. Combining the domains as originally stated, we have 
   $$\displaystyle \int_{\Gamma\backslash \h}  \wedge^T\, \overline{E}_s  \cdot \wedge^T E_r\,\frac{dy\,dx}{y^2}= \int_{\Gamma\backslash \h}  \wedge^T\, \overline{E}_s\cdot  E_r\,\frac{dy\,dx}{y^2}$$
   
 \end{proof}
%-----
%-----

\tab We will use this to compute the pairing for the linear combination terms in $S$ with the truncated Eisenstein series. Lemma \ref{intid} allows for each of the terms in the linear combination to become
 %-----
 $$\int_{\Gamma\backslash \h}\wedge^T \,\overline{E}_s\cdot  E_{r}\,\frac{dy\,dx}{y^2} 
 =\int_{\Gamma\backslash \h}\wedge^T \,\overline{E}_s\cdot  \wedge^T E_{r}\,\frac{dy\,dx}{y^2} $$ 
and then we will use Maass-Selberg and unwinding of $\wedge^T \,\overline{E}_s$.\\
 
\tab Recall the following the Maass-Selberg relation (see Casselman \cite{Casselman1993} or Garrett \cite{Garrett2016} for proof) states that\\

{\thm\label{MS} For two complex numbers $r, s\neq 1$ with $r(r-1)\neq s(s-1)$, 

 $$\int_{\Gamma\backslash \h}\wedge^T \,E_s\cdot  \wedge^T E_{r}\,\frac{dy\,dx}{y^2} $$ 
 $$=\frac{T^{r+s-1}}{r+s-1}+c_r\frac{T^{(1-r)+s-1}}{(1-r)+s-1}+c_s\frac{T^{r+(1-s)-1}}{r+(1-s)-1}+c_r c_s\frac{T^{(1-r)+(1-s)-1}}{(1-r)+(1-s)-1}.$$}\\
 
\tab Observe that when we are computing $\displaystyle\int_{\Gamma\backslash \h}\wedge^T \,\overline{E}_s\cdot  S\,\frac{dy\,dx}{y^2}$, the last few terms of $S$ will appear as $\int_{\Gamma\backslash \h}\wedge^T \,\overline{E}_s\cdot  E_{r}\,\frac{dy\,dx}{y^2} $.  Using the previous two results, for each $r$ in our linear combination $S$, we will have something of the form 
 $$\int_{\Gamma\backslash \h}\wedge^T \,\overline{E}_s\cdot  E_{r}\,\frac{dy\,dx}{y^2} $$
 $$=\frac{T^{r+\overline{s}-1}}{r+\overline{s}-1}
 +c_r\cdot \frac{T^{(1-r)+\overline{s}-1}}{(1-r)+\overline{s}-1}
 +c_{\overline{s}}\cdot\frac{T^{r+(1-\overline{s})-1}}{r+(1-\overline{s})-1}
 +c_r c_{\overline{s}}\cdot\frac{T^{(1-r)+(1-\overline{s})-1}}{(1-r)+(1-\overline{s})-1}$$

The following is a version of the Maass-Selberg relation for when $r=1$.  We follow the style of argument for the original Maass-Selberg relation, thus we will label it as a corollary.  \\

{\cor\label{MSCor} For all complex  $s$ with $0 \neq s(s-1)$, 
$$\int_{\Gamma\backslash \h}\wedge^T \,\overline{E}_s\cdot   E_{1}^*\,\frac{dy\,dx}{y^2} $$
$$=\frac{ T^{{\overline{s}}}}{{\overline{s}}}+C\frac{T^{{\overline{s}}-1}}{{\overline{s}}-1}-\frac{3}{\pi}\frac{T^{{\overline{s}}-1}}{{\overline{s}}-1}\log T +\frac{3}{\pi}\frac{T^{{\overline{s}}-1}}{({\overline{s}}-1)^2}
+c_{\overline{s}}\left(\frac{T^{1-{\overline{s}}}}{1-{\overline{s}}}-C\frac{T^{-{\overline{s}}}}{{\overline{s}}} +\frac{3}{\pi}\frac{T^{{-\overline{s}}} }{{\overline{s}}}\log T +\frac{3}{\pi}\frac{T^{-{\overline{s}}} }{{\overline{s}}^2}\right)
$$ 
}

\begin{proof}
$$ \int_{\Gamma\backslash \h}\wedge^T \,\overline{E}_s\cdot E_{1}^*\,\frac{dy\,dx}{y^2} 
 = \int_{\Gamma\backslash \h}\left(\sum_{\gamma\in P\backslash \Gamma } \text{Im}(\gamma z)^{\overline{s}}-\sum_{\gamma\in P\backslash \Gamma } \tau_{\overline{s}}(\gamma z)\right)\cdot  E_{1}^*\,\frac{dy\,dx}{y^2} $$
 $$ = \int_{\Gamma\backslash \h}\sum_{\gamma\in P\backslash \Gamma } \left(\text{Im}(\gamma z)^{\overline{s}}-\ \tau_{\overline{s}}(\gamma z)\right)\cdot  E_1^*\,\frac{dy\,dx}{y^2} 
 =\int_{P\backslash \h} \left( y^{\overline{s}}-\ \tau_{\overline{s}}( z)\right)\cdot E_{1}^*\,\frac{dy\,dx}{y^2} $$
 \hfill  by unwinding
 $$=\int_{\substack{P\backslash \h \\ y\leq T}}  y^{\overline{s}}\cdot  E_{1}^*\,\frac{dy\,dx}{y^2}-\int_{\substack{P\backslash \h \\ y> T}}c_{\overline{s}} y^{1-{\overline{s}}}\cdot  E_{1}^*\,\frac{dy\,dx}{y^2}  $$ 
  \hfill  from the definitions of $\tau_s$\\
 
 %-----
(A)  Examining $\displaystyle\int_{\substack{P\backslash \h \\ y\leq T}}y^{\overline{s}}\cdot  E_{1}^*\,\frac{dy\,dx}{y^2}$: 
 
 \tab Recall that the fundamental domain of $P\backslash \mathfrak{H}$ is $\{z=x+iy\in\mathfrak{H}~|~ 0\leq x\leq 1\}$ so we have
 $$\int_{\substack{P\backslash \h \\ y\leq T}}y^{\overline{s}}\cdot  E_{1}^*\,\frac{dy\,dx}{y^2}
 =\int_0^1\int_{ y\leq T}y^{{\overline{s}}-2}\cdot  E_{1}^*\,dy\,dx
  =\int_0^1\int_{ y\leq T}y^{{\overline{s}}-2}
 \left(c_PE_{1}^* +\sum_{n\neq 0}\varphi(n,1)W_1(|n|y) e^{2\pi i n x} \right)\,dy\,dx$$
 $$ =\int_0^1\int_{ y\leq T}
 y^{{\overline{s}}-2}\cdot c_PE_{1}^* +y^{{\overline{s}}-2}\cdot \sum_{n\neq 0}\varphi(n,1)W_1(|n|y) e^{2\pi i n x} \,dy\,dx$$\\
 
\tab   (1)  Examining the first term $\displaystyle \int_0^1\int_{ y\leq T} y^{{\overline{s}}-2}\cdot c_PE_{1}^* \,dy\,dx$:

$$\int_{ y\leq T} y^{{\overline{s}}-2}\cdot c_PE_{1}^* \,dy
 = \int_{ y\leq T} y^{{\overline{s}}-2}\cdot  \left( y+C-\frac{3}{\pi}\log y \right)\,dy
  = \int_{ y\leq T}  y^{{\overline{s}}-1}+Cy^{{\overline{s}}-2}-\frac{3}{\pi}y^{{\overline{s}}-2}\log y \,dy$$
 $$ = \frac{ y^{{\overline{s}}}}{{\overline{s}}}+C\frac{y^{{\overline{s}}-1}}{{\overline{s}}-1}-\frac{3}{\pi}\frac{y^{{\overline{s}}-1}}{{\overline{s}}-1}\log y +\frac{3}{\pi}\frac{y^{{\overline{s}}-1}}{({\overline{s}}-1)^2}\Big|_0^T$$
$$= \frac{ T^{{\overline{s}}}}{{\overline{s}}}+C\frac{T^{{\overline{s}}-1}}{{\overline{s}}-1}-\frac{3}{\pi}\frac{T^{{\overline{s}}-1}}{{\overline{s}}-1}\log T +\frac{3}{\pi}\frac{T^{{\overline{s}}-1}}{({\overline{s}}-1)^2}
-\lim_{t\to 0^+}\left( \frac{ t^{{\overline{s}}}}{{\overline{s}}}+C\frac{t^{{\overline{s}}-1}}{{\overline{s}}-1}-\frac{3}{\pi}\frac{t^{{\overline{s}}-1}}{{\overline{s}}-1}\log t +\frac{3}{\pi}\frac{t^{{\overline{s}}-1}}{({\overline{s}}-1)^2}\right)$$

For $\text{Re}(\overline{s})>1$, the second term 
$$\lim_{t\to 0^+}\left( \frac{ t^{{\overline{s}}}}{{\overline{s}}}+C\frac{t^{{\overline{s}}-1}}{{\overline{s}}-1}-\frac{3}{\pi}\frac{t^{{\overline{s}}-1}}{{\overline{s}}-1}\log t +\frac{3}{\pi}\frac{t^{{\overline{s}}-1}}{({\overline{s}}-1)^2}\right)=0$$

Thus, by the Identity Principle, we can meromorphically continue to get that
$$\displaystyle \int_0^1\int_{ y\leq T} y^{{\overline{s}}-2}\cdot c_PE_1^* \,dy\,dx= \frac{ T^{{\overline{s}}}}{{\overline{s}}}+C\frac{T^{{\overline{s}}-1}}{{\overline{s}}-1}-\frac{3}{\pi}\frac{T^{{\overline{s}}-1}}{{\overline{s}}-1}\log T +\frac{3}{\pi}\frac{T^{{\overline{s}}-1}}{({\overline{s}}-1)^2}$$

\tab   (2)  Examining the second term   $\displaystyle\int_0^1\int_{ y\leq T}y^{{\overline{s}}-2}\cdot \sum_{n\neq 0}\varphi(n,1)W_1(|n|y) e^{2\pi i n x} \,dy\,dx$:
 $$\int_0^1\int_{ y\leq T}y^{{\overline{s}}-2}\cdot \sum_{n\neq 0}\varphi(n,1)W_1(|n|y) e^{2\pi i n x} \,dy\,dx=
 \int_{ y\leq T}y^{{\overline{s}}-2}\cdot \sum_{n\neq 0}\varphi(n,1)W_1(|n|y) \,dy\cdot \int_0^1e^{2\pi i n x}\,dx$$
 $$=\int_{ y\leq T}y^{{\overline{s}}-2}\cdot \sum_{n\neq 0}\varphi(n,1)W_1(|n|y) \,dy\cdot\delta_{0,n}= 0 $$

 \vspace{.5cm}

  %-----
(B)  Examining $\displaystyle{\int_{\substack{P\backslash \h \\ y> T}} c_{\overline{s}} y^{1-{\overline{s}}}\cdot  E_{1}^*\,\frac{dy\,dx}{y^2}  }$: 

Recall that the fundamental domain of $P\backslash \mathfrak{H}$ is $\{z=x+iy\in\mathfrak{H}~|~ 0\leq x\leq 1\}$ so we have
$$\int_{\substack{P\backslash \h \\ y> T}} c_{\overline{s}} y^{1-{\overline{s}}}\cdot  E_{1}^*\,\frac{dy\,dx}{y^2}  =\int_0^1\int_{y\geq T} c_{\overline{s}} y^{1-{\overline{s}}}\cdot  E_{1}^*\,\frac{dy}{y^2}\, dx  $$

$$=  \int_0^1\int_{y\geq T} c_{\overline{s}} y^{1-{\overline{s}}}\left(c_PE_{1}^* +\sum_{n\neq 0}\varphi(n,1) W_1(|n|y) e^{2\pi i n x}\right)\,\frac{dy\, dx}{y^2} $$
  $$ =\int_0^1\int_{ y\geq T}c_{\overline{s}} y^{-1-{\overline{s}}}\cdot c_PE_{1}^* + c_{\overline{s}}y^{-1-{\overline{s}}}\cdot\sum_{n\neq 0}\varphi(n,1) W_1(|n|y)e^{2\pi i n x}\,dy\,dx$$
  since the product vanishes off the diagonal.\\

    \tab (1) Examining the first term $\displaystyle\int_0^1\int_{ y\geq T}c_{\overline{s}} y^{-1-{\overline{s}}}\cdot c_PE_{1}^*\,dy \, dx$:

$$\displaystyle\int_{ y\geq T}c_{\overline{s}} y^{-1-{\overline{s}}}\cdot c_PE_{1}^* \,dy  
= c_{\overline{s}}\int_{ y\geq T} y^{-1-{\overline{s}}}\cdot \left( y+C-\frac{3}{\pi}\log y \right) \,dy 
= c_{\overline{s}}\int_{ y\geq T} y^{-{\overline{s}}} +Cy^{-1-{\overline{s}}} -\frac{3}{\pi}y^{-1-{\overline{s}}} \log y \,dy  $$
$$= c_{\overline{s}}\left(\frac{y^{-{\overline{s}}+1}}{-{\overline{s}}+1}+C\frac{y^{-{\overline{s}}}}{-{\overline{s}}} -\frac{3}{\pi}\frac{y^{-{\overline{s}}} }{-{\overline{s}}}\log y +\frac{3}{\pi}\frac{y^{-{\overline{s}}} }{{\overline{s}}^2}\right)\Big|_T^{\infty}  $$
$$= c_{\overline{s}}\lim_{t\to\infty}\left(\frac{t^{-{\overline{s}}+1}}{-{\overline{s}}+1}+C\frac{t^{-{\overline{s}}}}{-{\overline{s}}} -\frac{3}{\pi}\frac{t^{-{\overline{s}}} }{-{\overline{s}}}\log t +\frac{3}{\pi}\frac{t^{-{\overline{s}}} }{{\overline{s}}^2}\right) -  c_{\overline{s}}\left(\frac{T^{-{\overline{s}}+1}}{-{\overline{s}}+1}+C\frac{T^{-{\overline{s}}}}{-{\overline{s}}} -\frac{3}{\pi}\frac{T^{-{\overline{s}}} }{-{\overline{s}}}\log T +\frac{3}{\pi}\frac{T^{-{\overline{s}}} }{{\overline{s}}^2}\right) $$

For $\text{Re}(\overline{s})>1$, the first term 
$$c_{\overline{s}}\lim_{t\to\infty}\left(\frac{t^{-{\overline{s}}+1}}{-{\overline{s}}+1}+C\frac{t^{-{\overline{s}}}}{-{\overline{s}}} -\frac{3}{\pi}\frac{t^{-{\overline{s}}} }{-{\overline{s}}}\log t +\frac{3}{\pi}\frac{t^{-{\overline{s}}} }{{\overline{s}}^2}\right)=0$$

Thus, by the Identity Principle, we can meromorphically continue to get that
$$\int_0^1\int_{ y\geq T}c_{\overline{s}} y^{-1-{\overline{s}}}\cdot c_PE_{1}^* \,dy \, dx= -c_{\overline{s}}\left(\frac{T^{1-{\overline{s}}}}{1-{\overline{s}}}-C\frac{T^{-{\overline{s}}}}{{\overline{s}}} +\frac{3}{\pi}\frac{T^{{-\overline{s}}} }{{\overline{s}}}\log T +\frac{3}{\pi}\frac{T^{-{\overline{s}}} }{{\overline{s}}^2}\right)$$

  \tab (2)  Examining the second term $\displaystyle\int_0^1\int_{ y\geq T}  c_{\overline{s}}y^{-1-{\overline{s}}}\cdot\sum_{n\neq 0}\varphi(n,1) W_1(|n|y) e^{2\pi i n x}\,dy \, dx$:

$$\displaystyle\int_0^1\int_{ y\geq T}  c_{\overline{s}}y^{-1-{\overline{s}}}\cdot\sum_{n\neq 0}\varphi(n,1) W_1(|n|y) e^{2\pi i n x}\,dy \, dx = \int_{ y\geq T}  c_{\overline{s}}y^{-1-{\overline{s}}}\cdot\sum_{n\neq 0}\varphi(n,1) W_1(|n|y) \,dy \cdot \int_0^1e^{2\pi i n x}\, dx $$
$$= \int_{ y\geq T}  c_{\overline{s}}y^{-1-{\overline{s}}}\cdot\sum_{n\neq 0}\varphi(n,1) W_1(|n|y) \,dy \cdot \delta_{0,n} = 0 $$

Thus 
$$\displaystyle{\int_{\substack{P\backslash \h \\ y> T}} c_{\overline{s}} y^{1-{\overline{s}}}\cdot E_{1}^*\,\frac{dy\,dx}{y^2}  }= -c_{\overline{s}}\left(\frac{T^{1-{\overline{s}}}}{1-{\overline{s}}}-C\frac{T^{-{\overline{s}}}}{{\overline{s}}} +\frac{3}{\pi}\frac{T^{{-\overline{s}}} }{{\overline{s}}}\log T +\frac{3}{\pi}\frac{T^{-{\overline{s}}} }{{\overline{s}}^2}\right)$$\\

\tab Putting (A) and (B) together, we get: 
$$\int_{\Gamma\backslash \h}\wedge^T \,{\overline E}_s\cdot   E_{1}^*\,\frac{dy\,dx}{y^2} $$
$$=\frac{ T^{{\overline{s}}}}{{\overline{s}}}+C\frac{T^{{\overline{s}}-1}}{{\overline{s}}-1}-\frac{3}{\pi}\frac{T^{{\overline{s}}-1}}{{\overline{s}}-1}\log T +\frac{3}{\pi}\frac{T^{{\overline{s}}-1}}{({\overline{s}}-1)^2}
+c_{\overline{s}}\left(\frac{T^{1-{\overline{s}}}}{1-{\overline{s}}}-C\frac{T^{-{\overline{s}}}}{{\overline{s}}} +\frac{3}{\pi}\frac{T^{{-\overline{s}}} }{{\overline{s}}}\log T +\frac{3}{\pi}\frac{T^{-{\overline{s}}} }{{\overline{s}}^2}\right)
$$

\end{proof}

\tab Lastly, for when $\alpha= \beta$, we will need to compute $\int_{\Gamma\backslash \h}E_s\cdot   \frac{\pi}{3}C_{\alpha} \,\frac{dy\,dx}{y^2}$.

{\lem\label{consttrunc} For each $s$, 
$$\int_{\Gamma\backslash \h}\wedge^T \,\overline{E}_s\cdot   \frac{\pi}{3}C_{\alpha}\,\frac{dy\,dx}{y^2} 
=\frac{\pi}{3}C_{\alpha}\cdot\frac{ T^{{\overline{s}}-1}}{\overline{s}-1}+\frac{\pi}{3}c_{\overline{s}}\,C_{\alpha}  \cdot \frac{T^{-\overline{s}}}{-{\overline{s}}} $$
}

\begin{proof}
$$ \int_{\Gamma\backslash \h}\wedge^T \,\overline{E}_s\cdot   \frac{\pi}{3}C_{\alpha}\,\frac{dy\,dx}{y^2} 
 =   \frac{\pi}{3}C_{\alpha} \int_{\Gamma\backslash \h}\sum_{\gamma\in P\backslash \Gamma } \text{Im}(\gamma z)^{\overline{s}}-\sum_{\gamma\in P\backslash \Gamma } \tau_{\overline{s}}(\gamma z)\,\frac{dy\,dx}{y^2} $$
 $$ =   \frac{\pi}{3}C_{\alpha} \int_{\Gamma\backslash \h}\sum_{\gamma\in P\backslash \Gamma } \left(\text{Im}(\gamma z)^{\overline{s}}-\ \tau_{\overline{s}}(\gamma z)\right)\,\frac{dy\,dx}{y^2} 
 =   \frac{\pi}{3}C_{\alpha}\int_{P\backslash \h} \left( y^{\overline{s}}-\ \tau_{\overline{s}}( z)\right)\,\frac{dy\,dx}{y^2} $$
 \hfill  by unwinding
 $$=   \frac{\pi}{3}C_{\alpha}\int_{\substack{P\backslash \h \\ y\leq T}}  y^{\overline{s}}\,\frac{dy\,dx}{y^2}-   \frac{\pi}{3}C_{\alpha}\int_{\substack{P\backslash \h \\ y> T}}c_{\overline{s}} y^{1-{\overline{s}}}\,\frac{dy\,dx}{y^2}  $$ 
  \hfill  from the definitions of $\tau_s$\\
 
 %-----
(A)  Examining $\displaystyle \frac{\pi}{3}C_{\alpha}\int_{\substack{P\backslash \h \\ y\leq T}}y^{\overline{s}}\,\frac{dy\,dx}{y^2}$: 
 
 \tab Recall that the fundamental domain of $P\backslash \mathfrak{H}$ is $\{z=x+iy\in\mathfrak{H}~|~ 0\leq x\leq 1\}$ so we have
 $$ \frac{\pi}{3}C_{\alpha}\int_{\substack{P\backslash \h \\ y\leq T}}y^{\overline{s}}\,\frac{dy\,dx}{y^2}
 = \frac{\pi}{3}C_{\alpha}\int_{ y\leq T}y^{{\overline{s}}-2}\,dy
  = \frac{\pi}{3}C_{\alpha}\cdot\frac{ y^{{\overline{s}}-1}}{\overline{s}-1}\Big|_0^{T} 
  = \frac{\pi}{3}C_{\alpha}\cdot\frac{ T^{{\overline{s}}-1}}{\overline{s}-1}
- \frac{\pi}{3}C_{\alpha}\cdot\lim_{t\to 0^+} \frac{ t^{{\overline{s}}-1}}{\overline{s}-1}   $$
 
For $\text{Re}(\overline{s})>1$, the second term 
$$\lim_{t\to 0^+}\frac{ t^{{\overline{s}}-1}}{\overline{s}-1} =0$$
Thus, by the Identity Principle, we can meromorphically continue to get that

 $$ \frac{\pi}{3}C_{\alpha}\int_{\substack{P\backslash \h \\ y\leq T}}y^{\overline{s}}\,\frac{dy\,dx}{y^2}=  \frac{\pi}{3}C_{\alpha}\cdot\frac{ T^{{\overline{s}}-1}}{\overline{s}-1}
$$ 
\\

  %-----
(B)  Examining $\displaystyle{ \frac{\pi}{3}C_{\alpha}\int_{\substack{P\backslash \h \\ y> T}} c_{\overline{s}} y^{1-{\overline{s}}}\,\frac{dy\,dx}{y^2}  }$: 

Recall that the fundamental domain of $P\backslash \mathfrak{H}$ is $\{z=x+iy\in\mathfrak{H}~|~ 0\leq x\leq 1\}$ so we have
$$ \frac{\pi}{3}C_{\alpha}\int_{\substack{P\backslash \h \\ y> T}} c_{\overline{s}} y^{1-{\overline{s}}}\,\frac{dy\,dx}{y^2}  
= \frac{\pi}{3}C_{\alpha}\int_{y\geq T} c_{\overline{s}} y^{-1-{\overline{s}}}\,dy
=  \frac{\pi}{3}c_{\overline{s}}C_{\alpha}  \cdot \frac{y^{-\overline{s}}}{-{\overline{s}}} \Big|_T^{infty} 
= \frac{\pi}{3}c_{\overline{s}}\,C_{\alpha}  \cdot \lim_{t\to\infty}\frac{t^{-\overline{s}}}{-{\overline{s}}}
-\frac{\pi}{3}c_{\overline{s}}\,C_{\alpha}  \cdot \frac{T^{-\overline{s}}}{-{\overline{s}}} $$

For $\text{Re}(\overline{s})>0$, the first term 
$$c_{\overline{s}}\lim_{t\to\infty}\frac{t^{-\overline{s}}}{-{\overline{s}}}=0$$

Thus, by the Identity Principle, we can meromorphically continue to get that
$$\frac{\pi}{3}C_{\alpha}\int_{\substack{P\backslash \h \\ y> T}} c_{\overline{s}} y^{1-{\overline{s}}}\,\frac{dy\,dx}{y^2}  
=-\frac{\pi}{3}c_{\overline{s}}\,C_{\alpha}  \cdot \frac{T^{-\overline{s}}}{-{\overline{s}}} $$

\tab Putting (A) and (B) together, we get: 
$$\int_{\Gamma\backslash \h}\wedge^T \,\overline{E}_s\cdot   \frac{\pi}{3}C_{\alpha}\,\frac{dy\,dx}{y^2} 
=\frac{\pi}{3}C_{\alpha}\cdot\frac{ T^{{\overline{s}}-1}}{\overline{s}-1}+\frac{\pi}{3}c_{\overline{s}}\,C_{\alpha}  \cdot \frac{T^{-\overline{s}}}{-{\overline{s}}} 
$$

\end{proof}

\tab  Finally we can apply these results to compute each $\displaystyle \int_{\Gamma\backslash \h}  \wedge^TS\cdot E_s\,\frac{dy\,dx}{y^2}$.  We will now address each of the regimes presented in Theorem \ref{existencethm}.\\

\subsection{Regimes} Recall the regimes set up in the proof of Theorem \ref{existencethm}. Again, suppose that $\alpha\neq 1$ and $\beta\neq 1$.\\

%----------------------
%----case I:
%----------------------
{\bf (I):} Assume $1/2 \leq \text{Re}(\alpha)< \text{Re}(\alpha)+1/2<\text{Re}(\beta)$ so $S=E_{\alpha}\cdot E_{\beta}-\left(E_{\alpha+\beta}+c_{\alpha}\cdot E_{1-\alpha+\beta}\right) $. \\

\tab First assume that $\alpha\neq 1$.   Using the above Lemma \ref{EaEb}, Lemma \ref{intid} and Theorem \ref{MS} above, after canceling terms, we have 

 $$\displaystyle \int_{\Gamma\backslash \h}  \wedge^TS\cdot E_s\,\frac{dy\,dx}{y^2}
 =  \langle  E_{\alpha}E_{\beta}, \wedge^T E_s\rangle_{L^2}
-\langle E_{\alpha+\beta}, \wedge^T E_s\rangle_{L^2}
-\langle c_{\alpha}\cdot E_{1-\alpha+\beta}, \wedge^T E_s\rangle_{L^2}$$

 $$=\frac{c_{\beta}}{\overline{s}+\alpha-\beta} T^{\overline{s}+\alpha-\beta}
+\frac{c_{\alpha}c_{\beta}}{\overline{s}-\alpha-\beta+1}T^{\overline{s}-\alpha-\beta+1} 
+L(\overline{s},E_{\alpha}\times E_{\beta})\cdot \int_{ y\leq T}y^{{\overline{s}}}\cdot W_\alpha(y) W_\beta(y) \,\frac{dy}{y^2}$$
$$
+\frac{c_{\beta}c_{\overline{s}}}{1-{\overline{s}}+\alpha-\beta}\, T^{1-{\overline{s}}+\alpha-\beta}  
+\frac{c_{\alpha}c_{\beta}c_{\overline{s}}}{2-{\overline{s}}-\alpha-\beta}\, T^{2-{\overline{s}}-\alpha-\beta} $$
$$-c_{\alpha+\beta}\cdot\frac{T^{-{\alpha-\beta}+\overline{s}}}{{-\alpha-\beta}+\overline{s}}
  -c_{\alpha+\beta} c_{\overline{s}}\cdot\frac{T^{1-{\alpha-\beta}-\overline{s}}}{1-{\alpha-\beta}-\overline{s}}
  -c_{\alpha}c_{1-\alpha+\beta}\cdot \frac{T^{\alpha-\beta+\overline{s}-1}}{\alpha-\beta+\overline{s}-1}
 -c_{\alpha}c_{1-\alpha+\beta} c_{\overline{s}}\cdot\frac{T^{\alpha-\beta-\overline{s}}}{\alpha-\beta-\overline{s}} $$
 $$\displaystyle-c_{\overline{s}}\sum_{n\neq 0}\varphi(n,\alpha)\varphi(n,\beta)n^{\overline{s}}\int_{ y\geq T} y^{-1-{\overline{s}}}\cdot W_\alpha(y) W_\beta(y)\,dy$$
 
\tab As $T\to \infty$, the polynomials will vanish on $1/2 < \text{Re}(\alpha)< \text{Re}(\alpha)+1/2<\text{Re}(\beta)$ since $\text{Re}(s)=1/2$.
Furthermore, since 
$$\displaystyle c_{\overline{s}}\sum_{n\neq 0}\varphi(n,\alpha)\varphi(n,\beta)n^{\overline{s}}\int_{ y\geq T} y^{-1-{\overline{s}}}\cdot W_\alpha(y) W_\beta(y)\,dy \to 0$$ as $T\to \infty$, we have that 
$$\langle S, E_s\rangle_{L^2} = \displaystyle \mathcal{B}^{-1}- \lim_T \langle S, \wedge^T E_s\rangle_{L^2}
=L(\overline{s},E_{\alpha}\times E_{\beta})\cdot \int_0^{\infty}y^{{\overline{s}}}\cdot W_\alpha(y) W_\beta(y) \,\frac{dy}{y^2}$$
$$=L(\overline{s},E_{\alpha}\times E_{\beta})\cdot 
\frac{\pi^{\alpha+\beta-\overline{s}}}{2\Gamma(\alpha)\Gamma(\beta)}\cdot 
\frac{\Gamma(\frac{\overline{s}+\alpha-\beta}{2})\Gamma(\frac{\overline{s}-\alpha+\beta}{2})\Gamma(\frac{\overline{s}+1-\alpha-\beta}{2}) \Gamma(\frac{\overline{s}-1+\alpha+\beta}{2})}{\Gamma(\overline{s})} 
= \Lambda(\overline{s},E_{\alpha}\times E_{\beta})$$\\

%-------

%----------------------
%----case II:
%----------------------
{\bf (II):} Assume $1/2\leq\text{Re}(\alpha)\leq \text{Re}(\beta)<  \text{Re}(\alpha)+1/2$ but that $\alpha\neq \beta$.
\\

\tab {\bf (IIa)} Suppose also that $\text{Re}(\alpha+\beta)> 3/2$ so that 
$S=E_{\alpha}\cdot E_{\beta}-\left(E_{\alpha+\beta}+c_{\beta}\cdot E_{1+\alpha-\beta}+c_{\alpha}\cdot E_{1-\alpha+\beta} \right).$ \\

\tab Using the above Lemma \ref{EaEb}, Lemma \ref{intid} and Theorem \ref{MS} above, after canceling terms, we have 
  $$ \displaystyle \int_{\Gamma\backslash \h}  \wedge^TS\cdot E_s\,\frac{dy\,dx}{y^2} = \langle  E_{\alpha}E_{\beta}, \wedge^T E_s\rangle_{L^2}
-\langle E_{\alpha+\beta}, \wedge^T E_s\rangle_{L^2}
-\langle c_{\beta}\cdot E_{1+\alpha-\beta}, \wedge^T E_s\rangle_{L^2}
-\langle c_{\alpha}\cdot E_{1-\alpha+\beta} , \wedge^T E_s\rangle_{L^2}$$

 $$=\frac{c_{\alpha}c_{\beta}}{-\alpha-\beta+\overline{s}+1}T^{-\alpha-\beta+\overline{s}+1} 
+L(\overline{s},E_{\alpha}\times E_{\beta})\cdot \int_{ y\leq T}y^{{\overline{s}}}\cdot W_\alpha(y) W_\beta(y) \,\frac{dy}{y^2}
+\frac{c_{\alpha}c_{\beta}c_{\overline{s}}}{-\alpha-\beta-{\overline{s}}+2}\, T^{-\alpha-\beta-{\overline{s}}+2}  $$
$$\displaystyle+c_{\overline{s}}\sum_{n\neq 0}\varphi(n,\alpha)\varphi(n,\beta)n^{\overline{s}}\int_{ y\geq T} y^{-1-{\overline{s}}}\cdot W_\alpha(y) W_\beta(y)\,dy -c_{\alpha+\beta}\cdot\frac{T^{-{\alpha-\beta}+\overline{s}}}{{-\alpha-\beta}+\overline{s}}
  -c_{\alpha+\beta} c_{\overline{s}}\cdot\frac{T^{-{\alpha-\beta}-\overline{s}+1}}{-{\alpha-\beta}-\overline{s}+1}$$
    $$ -c_{\beta}c_{1+\alpha-\beta}\cdot \frac{T^{-\alpha+\beta+\overline{s}-1}}{-\alpha+\beta+\overline{s}-1}
 -c_{\beta}c_{1+\alpha-\beta} c_{\overline{s}}\cdot\frac{T^{-\alpha+\beta-\overline{s} }}{-\alpha+\beta-\overline{s}} -c_{\alpha}c_{1-\alpha+\beta}\cdot \frac{T^{\alpha-\beta+\overline{s}-1}}{\alpha-\beta+\overline{s}-1}
 -c_{\alpha}c_{1-\alpha+\beta} c_{\overline{s}}\cdot\frac{T^{\alpha-\beta-\overline{s}}}{\alpha-\beta-\overline{s}}$$\\
 
\tab  As $T\to \infty$, the polynomials will vanish on $1/2\leq\text{Re}(\alpha)\leq \text{Re}(\beta)<  \text{Re}(\alpha)+1/2$ where  $\text{Re}(\alpha+\beta)> 3/2$ since $\text{Re}(s)=1/2$.
Furthermore, since 
$$\displaystyle c_{\overline{s}}\sum_{n\neq 0}\varphi(n,\alpha)\varphi(n,\beta)n^{\overline{s}}\int_{ y\geq T} y^{-1-{\overline{s}}}\cdot W_\alpha(y) W_\beta(y)\,dy \to 0$$ as $T\to \infty$, we have that 
$$\langle S, E_s\rangle_{L^2} = \displaystyle \mathcal{B}^{-1}- \lim_T \langle S, \wedge^T E_s\rangle_{L^2}
=L(\overline{s},E_{\alpha}\times E_{\beta})\cdot \int_0^{\infty}y^{{\overline{s}}}\cdot W_\alpha(y) W_\beta(y) \,\frac{dy}{y^2}$$
$$=L(\overline{s},E_{\alpha}\times E_{\beta})\cdot 
\frac{\pi^{\alpha+\beta-\overline{s}}}{2\Gamma(\alpha)\Gamma(\beta)}\cdot 
\frac{\Gamma(\frac{\overline{s}+\alpha-\beta}{2})\Gamma(\frac{\overline{s}-\alpha+\beta}{2})\Gamma(\frac{\overline{s}+1-\alpha-\beta}{2}) \Gamma(\frac{\overline{s}-1+\alpha+\beta}{2})}{\Gamma(\overline{s})}
= \Lambda(\overline{s},E_{\alpha}\times E_{\beta})$$\\

 %-------
\tab {\bf (IIb)} Now suppose $ \text{Re}(\alpha+\beta)< 3/2$ so
$S=E_{\alpha}\cdot E_{\beta}-\left(E_{\alpha+\beta}+c_{\beta}\cdot E_{1+\alpha-\beta}+c_{\alpha}\cdot E_{1-\alpha+\beta} +c_{\alpha}c_{\beta}\cdot E_{2-\alpha-\beta}\right).$\\

\tab Using the above Lemma \ref{EaEb}, Lemma \ref{intid} and Theorem \ref{MS} above, after canceling terms, we have 
  $$ \displaystyle \int_{\Gamma\backslash \h}  \wedge^TS\cdot E_s\,\frac{dy\,dx}{y^2} $$
  $$=
   \langle  E_{\alpha}E_{\beta}, \wedge^T E_s\rangle_{L^2}
-\langle E_{\alpha+\beta}, \wedge^T E_s\rangle_{L^2}
-\langle c_{\beta}\cdot E_{1+\alpha-\beta}, \wedge^T E_s\rangle_{L^2}
-\langle c_{\alpha}\cdot E_{1-\alpha+\beta} , \wedge^T E_s\rangle_{L^2}
-\langle c_{\alpha}c_{\beta}\cdot E_{2-\alpha-\beta} , \wedge^T E_s\rangle_{L^2}$$
 $$=
L(\overline{s},E_{\alpha}\times E_{\beta})\cdot \int_{ y\leq T}y^{{\overline{s}}}\cdot W_\alpha(y) W_\beta(y) \,\frac{dy}{y^2}
\displaystyle-c_{\overline{s}}\sum_{n\neq 0}\varphi(n,\alpha)\varphi(n,\beta)n^{\overline{s}}\int_{ y\geq T} y^{-1-{\overline{s}}}\cdot W_\alpha(y) W_\beta(y)\,dy$$
$$ -c_{\alpha+\beta}\cdot\frac{T^{-{\alpha-\beta}+\overline{s}}}{{-\alpha-\beta}+\overline{s}}
   -c_{\alpha+\beta} c_{\overline{s}}\cdot\frac{T^{-{\alpha-\beta}-\overline{s}+1}}{-{\alpha-\beta}-\overline{s}+1}
    -c_{\beta}c_{\overline{s}}\cdot\frac{T^{{\alpha-\beta}-\overline{s}+1}}{{\alpha-\beta}-\overline{s}+1}$$
$$ -c_{\beta}c_{1+\alpha-\beta} c_{\overline{s}}\cdot\frac{T^{-\alpha+\beta-\overline{s} }}{-\alpha+\beta-\overline{s}}
-c_{\alpha}c_{1-\alpha+\beta}\cdot \frac{T^{\alpha-\beta+\overline{s}-1}}{\alpha-\beta+\overline{s}-1}
 -c_{\alpha}c_{1-\alpha+\beta} c_{\overline{s}}\cdot\frac{T^{\alpha-\beta-\overline{s}}}{\alpha-\beta-\overline{s}}$$
 $$-c_{\alpha}c_{\beta}c_{2-\alpha-\beta}\cdot \frac{T^{{\alpha+\beta}+\overline{s}-2}}{{\alpha+\beta}+\overline{s}-2}
 -c_{\alpha}c_{\beta}c_{2-\alpha-\beta} c_{\overline{s}}\cdot\frac{T^{\alpha+\beta-\overline{s}-1}}{\alpha+\beta-\overline{s}-1}$$\\
 
 \tab  As $T\to \infty$, the polynomials will vanish on $1/2\leq\text{Re}(\alpha)\leq \text{Re}(\beta)<  \text{Re}(\alpha)+1/2$ where  $\text{Re}(\alpha+\beta)< 3/2$ since $\text{Re}(s)=1/2$.
Furthermore, since 
$$\displaystyle c_{\overline{s}}\sum_{n\neq 0}\varphi(n,\alpha)\varphi(n,\beta)n^{\overline{s}}\int_{ y\geq T} y^{-1-{\overline{s}}}\cdot W_\alpha(y) W_\beta(y)\,dy \to 0$$ as $T\to \infty$, we have that 
$$\langle S, E_s\rangle_{L^2} = \displaystyle \mathcal{B}^{-1}- \lim_T \langle S, \wedge^T E_s\rangle_{L^2}
=L(\overline{s},E_{\alpha}\times E_{\beta})\cdot \int_0^{\infty}y^{{\overline{s}}}\cdot W_\alpha(y) W_\beta(y) \,\frac{dy}{y^2}$$
$$=L(\overline{s},E_{\alpha}\times E_{\beta})\cdot 
\frac{\pi^{\alpha+\beta-\overline{s}}}{2\Gamma(\alpha)\Gamma(\beta)}\cdot 
\frac{\Gamma(\frac{\overline{s}+\alpha-\beta}{2})\Gamma(\frac{\overline{s}-\alpha+\beta}{2})\Gamma(\frac{\overline{s}+1-\alpha-\beta}{2}) \Gamma(\frac{\overline{s}-1+\alpha+\beta}{2})}{\Gamma(\overline{s})} =\Lambda (\overline{s},E_{\alpha}\times E_{\beta})$$\\

%----case III:

{\bf (III):} Suppose that $\alpha=\beta$.\\

\tab {\bf (IIIa)} Also assume  $\text{Re}(\alpha)>3/4$ so $S= (E_{\alpha})^2-  E_{2\alpha}-2c_{\alpha}E_1^*+\frac{\pi}{3}C_{\alpha}$. Using the above Lemma \ref{EaEb}, Lemma \ref{intid}, Theorem \ref{MS}, Corollary \ref{MSCor} and Lemma \ref{consttrunc}   above, after canceling terms, we have

$$\displaystyle \int_{\Gamma\backslash \h}  \wedge^TS\cdot E_s\,\frac{dy\,dx}{y^2}
 =  \langle  (E_{\alpha})^2, \wedge^T E_s\rangle_{L^2}
-\langle E_{2\alpha}, \wedge^T E_s\rangle_{L^2}
-\langle 2c_{\alpha}E_1^*, \wedge^T E_s\rangle_{L^2}+\langle \frac{\pi}{3}C_{\alpha}, \wedge^T E_s\rangle_{L^2}$$
%t1 
 $$=\frac{c_{\alpha}^2}{\overline{s}-2\alpha+1}T^{\overline{s}-2\alpha+1} \displaystyle+L(\overline{s},E_{\alpha}\times E_{\alpha})\cdot \int_{ y\leq T}y^{{\overline{s}}}\cdot W_\alpha(y) W_\alpha(y) \,\frac{dy}{y^2}$$

$$\displaystyle
+\frac{c_{\alpha}^2c_{\overline{s}}}{2-{\overline{s}}-2\alpha}\, T^{2-{\overline{s}}-2\alpha}
-c_{\overline{s}}\sum_{n\neq 0}\varphi(n,\alpha)\varphi(n,\alpha)n^{\overline{s}}\int_{ y\geq T} y^{-1-{\overline{s}}}\cdot W_\alpha(y) W_\alpha(y)\,dy$$
%t2
$$-c_{2\alpha}\cdot \frac{T^{(1-2\alpha)+\overline{s}-1}}{(1-2\alpha)+\overline{s}-1}
 -c_{2\alpha} c_{\overline{s}}\cdot\frac{T^{(1-2\alpha)+(1-\overline{s})-1}}{(1-2\alpha)+(1-\overline{s})-1}-2c_{\alpha}C\frac{T^{{\overline{s}}-1}}{{\overline{s}}-1}-2c_{\alpha}\frac{3}{\pi}\frac{T^{{\overline{s}}-1}}{{\overline{s}}-1}\log T +2c_{\alpha}\frac{3}{\pi}\frac{T^{{\overline{s}}-1}}{({\overline{s}}-1)^2}$$
 $$-2c_{\alpha}c_{\overline{s}}\left(2c_{\alpha}C\frac{T^{-{\overline{s}}}}{-{\overline{s}}} -\frac{3}{\pi}\frac{T^{-{\overline{s}}} }{-{\overline{s}}}\log T +\frac{3}{\pi}\frac{T^{-{\overline{s}}} }{{\overline{s}}^2}\right)+\frac{\pi}{3}C_{\alpha}\cdot\frac{ T^{{\overline{s}}-1}}{\overline{s}-1}+\frac{\pi}{3}c_{\overline{s}}\,C_{\alpha}  \cdot \frac{T^{-\overline{s}}}{-{\overline{s}}} $$ 
 
 \tab  As $T\to \infty$, the polynomials will vanish on $\text{Re}(\alpha)>3/4$ since $\text{Re}(s)=1/2$. Furthermore, since 
$$c_{\overline{s}}\sum_{n\neq 0}\varphi(n,\alpha)\varphi(n,\alpha)n^{\overline{s}}\int_{ y\geq T} y^{-1-{\overline{s}}}\cdot W_\alpha(y) W_\alpha(y)\,dy\to 0$$ 
as $T\to \infty $ we have that 

$$\langle S, E_s\rangle_{L^2} = \displaystyle \mathcal{B}^{-1}- \lim_T \langle S, \wedge^T E_s\rangle_{L^2}=
L(\overline{s},E_{\alpha}\times E_{\alpha})\cdot \int_0^\infty y^{{\overline{s}}}\cdot W_\alpha(y) W_\alpha(y) \,\frac{dy}{y^2}$$\\

\tab {\bf (IIIb)}  Suppose that $\alpha=\beta$ and $\text{Re}(\alpha)<3/4$ so $S= (E_{\alpha})^2-  E_{2\alpha}-2c_{\alpha}E_1^*-c_{\alpha}^2E_{2-2\alpha}+\frac{\pi}{3}C_{\alpha}$. Using the above Lemma \ref{EaEb}, Lemma \ref{intid}, Theorem \ref{MS}, Corollary \ref{MSCor} and Lemma \ref{consttrunc} above, after canceling terms, we have

$$\displaystyle \int_{\Gamma\backslash \h}  \wedge^TS\cdot E_s\,\frac{dy\,dx}{y^2}$$
$$ =  \langle  (E_{\alpha})^2,\wedge^T E_s\rangle_{L^2}
-\langle E_{2\alpha}, \wedge^T E_s\rangle_{L^2}
-\langle2c_{\alpha}E_1^*, \wedge^T E_s\rangle_{L^2}
-\langle c_{\alpha}^2E_{2-2\alpha}, \wedge^T E_s\rangle_{L^2}
+\langle \frac{\pi}{3}C_{\alpha}, \wedge^T E_s\rangle_{L^2}$$
%t1 
 
 $$=\displaystyle
 L(\overline{s},E_{\alpha}\times E_{\alpha})\cdot \int_{ y\leq T}y^{{\overline{s}}}\cdot W_\alpha(y) W_\alpha(y) \,\frac{dy}{y^2}
 \displaystyle
-c_{\overline{s}}\sum_{n\neq 0}\varphi(n,\alpha)\varphi(n,\alpha)n^{\overline{s}}\int_{ y\geq T} y^{-1-{\overline{s}}}\cdot W_\alpha(y) W_\alpha(y)\,dy$$
%t2
$$ -c_{2\alpha}\cdot \frac{T^{(1-2\alpha)+\overline{s}-1}}{(1-2\alpha)+\overline{s}-1}
 -c_{2\alpha} c_{\overline{s}}\cdot\frac{T^{(1-2\alpha)+(1-\overline{s})-1}}{(1-2\alpha)+(1-\overline{s})-1}$$
  %t3
 $$-2c_{\alpha}C\frac{T^{{\overline{s}}-1}}{{\overline{s}}-1}-2c_{\alpha}\frac{3}{\pi}\frac{T^{{\overline{s}}-1}}{{\overline{s}}-1}\log T +2c_{\alpha}\frac{3}{\pi}\frac{T^{{\overline{s}}-1}}{({\overline{s}}-1)^2}
 -2c_{\alpha}c_{\overline{s}}\left(C\frac{T^{-{\overline{s}}}}{-{\overline{s}}} -\frac{3}{\pi}\frac{T^{-{\overline{s}}} }{-{\overline{s}}}\log T +\frac{3}{\pi}\frac{T^{-{\overline{s}}} }{{\overline{s}}^2}\right)$$ 
  %t4
$$ -c_{\alpha}^2c_{2-2\alpha}\cdot \frac{T^{(1-(2-2\alpha))+\overline{s}-1}}{(1-(2-2\alpha))+\overline{s}-1}
-c_{\alpha}^2c_{2-2\alpha} c_{\overline{s}}\cdot\frac{T^{(1-(2-2\alpha))+(1-\overline{s})-1}}{(1-(2-2\alpha))+(1-\overline{s})-1}$$
$$+\frac{\pi}{3}C_{\alpha}\cdot\frac{ T^{{\overline{s}}-1}}{\overline{s}-1}+\frac{\pi}{3}c_{\overline{s}}\,C_{\alpha}  \cdot \frac{T^{-\overline{s}}}{-{\overline{s}}}$$ \\

 \tab  As $T\to \infty$, the polynomials will vanish on $1/2\leq\text{Re}(\alpha)<3/4$ since $\text{Re}(s)=1/2$. Furthermore, since 
$$c_{\overline{s}}\sum_{n\neq 0}\varphi(n,\alpha)\varphi(n,\alpha)n^{\overline{s}}\int_{ y\geq T} y^{-1-{\overline{s}}}\cdot W_\alpha(y) W_\alpha(y)\,dyy\to 0$$ 
as $T\to \infty $ we have that 

$$\langle S, E_s\rangle_{L^2} = \displaystyle \mathcal{B}^{-1}- \lim_T \langle S, \wedge^T E_s\rangle_{L^2}=
L(\overline{s},E_{\alpha}\times E_{\alpha})\cdot \int_0^\infty y^{{\overline{s}}}\cdot W_\alpha(y) W_\alpha(y) \,\frac{dy}{y^2}$$\\

\tab Finally, for each $\alpha,\beta\in \mathcal{C}$, we have that 
$$\langle S, E_s\rangle_{L^2} =
L(\overline{s},E_{\alpha}\times E_{\alpha})\cdot \int_0^\infty y^{{\overline{s}}}\cdot W_\alpha(y) W_\alpha(y) \,\frac{dy}{y^2} = \Lambda(\overline{s},E_{\alpha}\times E_{\alpha}).$$

 \vspace{.5cm}

\section{The Residual Spectrum}\label{res}
%------------------------------------------------------

\tab  We will compute the residual spectrum $\langle S,1\rangle_{L^2}$ for each $S$. 

{\thm\label{resthm} $\langle S,1\rangle_{L^2}=0$ for each $S$ presented in Theorem \ref{existencethm}.}\\

\tab We will prove this in what follows with the following Lemma and the use of truncated Eisenstein series.

{\lem\label{0} For each $\beta$ and $\alpha\neq 1$,  $\displaystyle\int_{\Gamma\backslash \mathfrak{H}}E_{\alpha}\cdot E_{\beta}-E_{\alpha+\beta}-c_{\alpha}\cdot E_{1-\alpha+\beta}\, \frac{dx\,dy}{y^2}=0$}
\begin{proof} $\displaystyle\int_{\Gamma\backslash \mathfrak{H}}E_{\alpha}\cdot E_{\beta}-E_{\alpha+\beta}-c_{\alpha}\cdot E_{1-\alpha+\beta}\, \frac{dx\,dy}{y^2}$
$$=\int_{\Gamma\backslash \mathfrak{H}}E_{\alpha}\cdot \sum_{\gamma_1\in P\backslash \Gamma}\text{Im}(\gamma_1 z)^{\beta}
-\sum_{\gamma_2\in P\backslash \Gamma}\text{Im}(\gamma_2 z)^{\alpha+\beta}
-c_{\alpha}\cdot \sum_{\gamma_3\in P\backslash \Gamma}\text{Im}(\gamma_3 z)^{1-\alpha+\beta}\, \frac{dx\,dy}{y^2}$$
$$=\int_{\Gamma\backslash \mathfrak{H}}\sum_{\gamma\in P\backslash \Gamma}\left((\gamma y)^{\beta}\cdot E_{\alpha}-(\gamma y)^{\alpha+\beta}
-c_{\alpha}\cdot (\gamma y)^{1-\alpha+\beta}\right)\, \frac{dx\,dy}{y^2}$$
\hfill by unwinding
$$=\int_{P\backslash \mathfrak{H}}\left(y^{\beta}\cdot E_{\alpha}-y^{\alpha+\beta}-c_{\alpha}\cdot  y^{1-\alpha+\beta}\right)\, \frac{dx\,dy}{y^2}$$
Now, writing out the Fourier-Whittaker expansions for $E_{\alpha}$, we have
$$=\int_{P\backslash \mathfrak{H}}\left(y^{\beta}\cdot \left( y^{\alpha}+c_{\alpha }y^{1-\alpha}+\sum_{n\neq 0}\varphi(n,\alpha)\cdot W_{\alpha}(|n|y)e^{2\pi i n x}\right)
-y^{\alpha+\beta}
-c_{\alpha}\cdot  y^{1-\alpha+\beta}\right)\, \frac{dx\,dy}{y^2}$$
$$=\int_{P\backslash \mathfrak{H}}\left(y^{\alpha+\beta}+c_{\alpha }y^{1-\alpha+\beta}+y^{\beta}\cdot\sum_{n\neq 0}\varphi(n,\alpha)\cdot W_{\alpha}(|n|y)e^{2\pi i n x}-y^{\alpha+\beta}-c_{\alpha}\cdot  y^{1-\alpha+\beta}\right)\, \frac{dx\,dy}{y^2}$$
$$=\int_{P\backslash \mathfrak{H}}y^{\beta}\cdot\sum_{n\neq 0}\varphi(n,\alpha)\cdot W_{\alpha}(|n|zy)e^{2\pi i n x}\, \frac{dx\,dy}{y^2}
=\sum_{n\neq 0}\varphi(n,\alpha)\int_0^\infty \int_0^1y^{\beta}\cdot W_{\alpha}(|n|y)e^{2\pi i n x}\, \frac{dx\,dy}{y^2}=0.$$\\

\end{proof}

%----------------
%----------------
%----------------
%----------------
%----------------
%----------------
 
\subsection{Regimes} Recall the regimes set up in the proof of Theorem \ref{existencethm}. Again, suppose that $\alpha\neq 1$ and $\beta\neq 1$.\\

%----------------------
%----case I:
%----------------------
{\bf (I):} Suppose that $1/2 \leq \text{Re}(\alpha)\leq \text{Re}(\alpha)+1/2<\text{Re}(\beta)$ then  $$\langle S,1\rangle_{L^2}= \displaystyle\int_{\Gamma\backslash \mathfrak{H}}E_{\alpha}\cdot E_{\beta}-E_{\alpha+\beta}-c_{\alpha}\cdot E_{1-\alpha+\beta}\, \frac{dx\,dy}{y^2}=0$$
by Lemma \ref{0}.\\

%----------------------
%----case II:
%----------------------
{\bf (II):}  Suppose $1/2\leq\text{Re}(\alpha)\leq \text{Re}(\beta)<  \text{Re}(\alpha)+1/2$.
\\

\tab {\bf (IIa)} Suppose also that $\text{Re}(\alpha+\beta)>3/2$.
Then
$$S=E_{\alpha}\cdot E_{\beta}-\left(E_{\alpha+\beta}+c_{\alpha}\cdot E_{1-\alpha+\beta}+c_{\beta}\cdot E_{1+\alpha-\beta} \right)$$

which gives
$$ \langle S,1\rangle_{L^2}
=\int_{\Gamma\backslash \mathfrak{H}}E_{\alpha}\cdot E_{\beta}-\left(E_{\alpha+\beta}+c_{\alpha}\cdot E_{1-\alpha+\beta}+c_{\beta}\cdot E_{1+\alpha-\beta} \right)\, \frac{dx\,dy}{y^2}
=-c_{\beta}\cdot \int_{\Gamma\backslash \mathfrak{H}}E_{1+\alpha-\beta}\, \frac{dx\,dy}{y^2}$$ by Lemma \ref{0}.\\

\tab We will again use Arthur truncation to compute this integral as well as a trick which involves passing the computation of a residue outside of an integral.  Given that $E_r$ is a vector-valued holomorphic function, vector-valued Cauchy (-Goursat) theory, as well as Gelfand-Pettis,  implies that we can pass the linear functional outside the integral (see \cite{Grothendieck} or \cite{Garrett2011}).\\

\tab Since $\text{Res}_{r=1}(E_r)=\frac{3}{\pi}$,
$$ \langle \wedge^T E_{1+\alpha-\beta},1\rangle_{L^2}
=\int_{\Gamma\backslash\h}  \wedge^T E_{1+\alpha-\beta}\,\frac{dy\, dx}{y^2}
= \int_{\Gamma\backslash\h}  \wedge^T E_{1+\alpha-\beta}\cdot\text{Res}_{r=1}(E_r)\cdot \frac{\pi}{ 3}\,\frac{dy\, dx}{y^2}$$ 

$$= \frac{\pi}{ 3}\cdot \text{Res}_{r=1}\left(\int_{\Gamma\backslash\h}  \wedge^T E_{1+\alpha-\beta}\cdot E_r\,\frac{dy\, dx}{y^2}\right)
= \frac{\pi}{ 3}\cdot \text{Res}_{r=1}\left(\int_{\Gamma\backslash\h}  \wedge^T E_{1+\alpha-\beta}\cdot \wedge^TE_r\,\frac{dy\, dx}{y^2}\right)$$
\hfill by Lemma \ref{intid}
$$= \frac{\pi}{ 3}\cdot \text{Res}_{r=1}\left(\frac{T^{r+\alpha-\beta}}{r+\alpha-\beta}+c_r\frac{T^{1-r+\alpha-\beta}}{1-r+\alpha-\beta}+c_{1+\alpha-\beta}\frac{T^{r-1-\alpha+\beta}}{r-1-\alpha+\beta}+c_r c_{1+\alpha-\beta}\frac{T^{-r-\alpha+\beta}}{-r-\alpha+\beta}\right)
= 0$$\\

%since $$\int_{\Gamma\backslash \h}\wedge^T \,E_{1+\alpha-\beta}\cdot  \wedge^T E_{r}\,\frac{dy\,dx}{y^2} $$  $$=\frac{T^{r+1+\alpha-\beta-1}}{r+1+\alpha-\beta-1}+c_r\frac{T^{(1-r)+1+\alpha-\beta-1}}{(1-r)+1+\alpha-\beta-1}+c_{1+\alpha-\beta}\frac{T^{r+(1-(1+\alpha-\beta))-1}}{r+(1-(1+\alpha-\beta))-1}$  $$\phantom{weeeeeeeeeeeeeeeeeeeeeeeeeeeeeeeeeeeeeeeeeee}+c_r c_{1+\alpha-\beta}\frac{T^{(1-r)+(1-(1+\alpha-\beta))-1}}{(1-r)+(1-(1+\alpha-\beta))-1}$$ $$=\frac{T^{r+\alpha-\beta}}{r+\alpha-\beta}+c_r\frac{T^{1-r+\alpha-\beta}}{1-r+\alpha-\beta}+c_{1+\alpha-\beta}\frac{T^{r-1-\alpha+\beta}}{r-1-\alpha+\beta}+c_r c_{1+\alpha-\beta}\frac{T^{-r-\alpha+\beta}}{-r-\alpha+\beta}.$$

\tab {\bf (IIb)} Now suppose also that $ \text{Re}(\alpha+\beta)< 3/2$. Then
$$S=E_{\alpha}\cdot E_{\beta}-\left(E_{\alpha+\beta}+c_{\alpha}\cdot E_{1-\alpha+\beta} +c_{\beta}\cdot E_{1+\alpha-\beta}+c_{\alpha}c_{\beta}\cdot E_{2-\alpha-\beta}\right)$$

which gives
$$ \langle S,1\rangle_{L^2}
=\int_{\Gamma\backslash \mathfrak{H}}E_{\alpha}\cdot E_{\beta}-E_{\alpha+\beta}-c_{\alpha}\cdot E_{1-\alpha+\beta} -c_{\beta}\cdot E_{1+\alpha-\beta}-c_{\alpha}c_{\beta}\cdot E_{2-\alpha-\beta}\, \frac{dx\,dy}{y^2}$$
$$=-c_{\beta}\cdot \int_{\Gamma\backslash \mathfrak{H}}E_{1+\alpha-\beta}+c_{\alpha}\cdot E_{2-\alpha-\beta}\, \frac{dx\,dy}{y^2}$$ 
by Lemma \ref{0}.\\

\tab As  {\bf (IIa)}, we again have that $ \langle \wedge^T E_{1+\alpha-\beta},1\rangle_{L^2}= 0$. We will again use Arthur truncation to compute the last integral
$$ \langle \wedge^T E_{2-\alpha-\beta},1\rangle_{L^2}
=\int_{\Gamma\backslash\h}  \wedge^T E_{2-\alpha-\beta}\,\frac{dy\, dx}{y^2}
= \int_{\Gamma\backslash\h}  \wedge^T E_{2-\alpha-\beta}\cdot\text{Res}_{r=1}(E_r)\cdot \frac{\pi}{ 3}\,\frac{dy\, dx}{y^2}$$ 
\hfill since $\text{Res}_{r=1}(E_r)=\frac{3}{\pi}$
$$= \frac{\pi}{ 3}\cdot \text{Res}_{r=1}\left(\int_{\Gamma\backslash\h}  \wedge^T E_{1+\alpha-\beta}\cdot E_r\,\frac{dy\, dx}{y^2}\right)
= \frac{\pi}{ 3}\cdot \text{Res}_{r=1}\left(\int_{\Gamma\backslash\h}  \wedge^T E_{2-\alpha-\beta}\cdot \wedge^TE_r\,\frac{dy\, dx}{y^2}\right)$$
\hfill by Lemma \ref{intid}
$$= \frac{\pi}{ 3}\cdot \text{Res}_{r=1}\left(\frac{T^{r+1-\alpha-\beta}}{r+1-\alpha-\beta}+c_r\frac{T^{-r+2-\alpha-\beta}}{-r+2-\alpha-\beta}+c_{2-\alpha-\beta}\frac{T^{r-2+\alpha+\beta}}{r-2+\alpha+\beta}+c_r c_{2-\alpha-\beta}\frac{T^{-r-1+\alpha+\beta}}{-r-1+\alpha+\beta}\right)$$
$$=0$$\\

%since $$\int_{\Gamma\backslash \h}\wedge^T \,E_{2-\alpha-\beta}\cdot  \wedge^T E_{r}\,\frac{dy\,dx}{y^2} $$  $$=\frac{T^{r+2-\alpha-\beta-1}}{r+2-\alpha-\beta-1}+c_r\frac{T^{(1-r)+2-\alpha-\beta-1}}{(1-r)+2-\alpha-\beta-1}+c_{2-\alpha-\beta}\frac{T^{r+(1-(2-\alpha-\beta))-1}}{r+(1-(2-\alpha-\beta))-1}$$ $$\phantom{weeeeeeeeeeeeeeeeeeeeeeeeeeeeeeeeeeeeeeeeeee}+c_r c_{2-\alpha-\beta}\frac{T^{(1-r)+(1-(2-\alpha-\beta))-1}}{(1-r)+(1-(2-\alpha-\beta))-1}$$ $$=\frac{T^{r+1-\alpha-\beta}}{r+1-\alpha-\beta}+c_r\frac{T^{-r+2-\alpha-\beta}}{-r+2-\alpha-\beta}+c_{2-\alpha-\beta}\frac{T^{r-2+\alpha+\beta}}{r-2+\alpha+\beta}+c_r c_{2-\alpha-\beta}\frac{T^{-r-1+\alpha+\beta}}{-r-1+\alpha+\beta}.$$\\

%----------------------
%----case III:
%----------------------
{\bf (III):}  Suppose that $\alpha=\beta$. Unlike the other spectral integrals, we will consider this case as a limit of case {\bf (II)}.  Since both the limit and the integrals converge nicely (as already proven in Section 1), we can interchange the limit and the integral to get the following.  \\

\tab {\bf (IIIa):} Also suppose $\text{Re}(\alpha)>3/4$ so $S= (E_{\alpha})^2-  E_{2\alpha}-2c_{\alpha}E_1^*-c_{\alpha}^2E_{2-2\alpha}+\frac{\pi}{3}C_{\alpha}$ which gives
$$ \langle S,1\rangle_{L^2}=\int_{\Gamma\backslash \mathfrak{H}}(E_{\alpha})^2-  E_{2\alpha}-2c_{\alpha}E_1^*+\frac{\pi}{3}C_{\alpha}\, \frac{dx\,dy}{y^2}$$
$$=\int_{\Gamma\backslash \mathfrak{H}}\lim_{\beta\to\alpha}\left(E_{\alpha}\cdot E_{\beta}-E_{\alpha+\beta}-c_{\alpha}\cdot E_{1-\alpha+\beta}-c_{\beta}\cdot E_{1+\alpha-\beta}-c_{\alpha}c_{\beta}\cdot E_{2-\alpha-\beta}\right)\, \frac{dx\,dy}{y^2}$$
\hfill by Lemma \ref{a=b} 
$$=\lim_{\beta\to\alpha}\int_{\Gamma\backslash \mathfrak{H}}E_{\alpha}\cdot E_{\beta}-E_{\alpha+\beta}-c_{\alpha}\cdot E_{1-\alpha+\beta}-c_{\beta}\cdot E_{1+\alpha-\beta} -c_{\alpha}c_{\beta}\cdot E_{2-\alpha-\beta}\, \frac{dx\,dy}{y^2}= 0$$
by part {\bf (IIa)}.\\

\tab {\bf (IIIb):} Now suppose $1/2\leq\text{Re}(\alpha)<3/4$ so $S= (E_{\alpha})^2-  E_{2\alpha}-2c_{\alpha}E_1^*-c_{\alpha}^2E_{2-2\alpha}+\frac{\pi}{3}C_{\alpha}$ which gives
$$ \langle S,1\rangle_{L^2}=\int_{\Gamma\backslash \mathfrak{H}}(E_{\alpha})^2-  E_{2\alpha}-2c_{\alpha}E_1^*-c_{\alpha}^2E_{2-2\alpha}+\frac{\pi}{3}C_{\alpha}\, \frac{dx\,dy}{y^2}$$
$$=\int_{\Gamma\backslash \mathfrak{H}}\lim_{\beta\to\alpha}\left(E_{\alpha}\cdot E_{\beta}-E_{\alpha+\beta}-c_{\alpha}\cdot E_{1-\alpha+\beta}-c_{\beta}\cdot E_{1+\alpha-\beta}\right)\, \frac{dx\,dy}{y^2}$$
\hfill by Lemma \ref{a=b} 
$$=\lim_{\beta\to\alpha}\int_{\Gamma\backslash \mathfrak{H}}E_{\alpha}\cdot E_{\beta}-E_{\alpha+\beta}-c_{\alpha}\cdot E_{1-\alpha+\beta}-c_{\beta}\cdot E_{1+\alpha-\beta}\, \frac{dx\,dy}{y^2}= 0$$
by part {\bf (IIb)}.\\

\tab Putting these cases together we see that the residual spectrum $\langle S,1\rangle_{L^2}=0$ for each $S$.\vspace{.5cm}

%------------------------------------------------------
%------------------------------------------------------
\section{Spectral Decomposition and Solution}\label{sol}
%------------------------------------------------------

\tab Finally, putting everything together we have

 $$S=\sum_{f\text{ cfm}} \langle S,f\rangle_{L^2}\cdot f+ \frac{\langle S,1\rangle_{L^2}\cdot 1}{\langle 1,1\rangle_{L^2}}+\frac{1}{4\pi i}\int_{(1/2)}\langle S,E_s\rangle_{L^2}\cdot E_s\,ds$$
$$= \sum_{f\text{ cfm}} \Lambda(\alpha,\overline{f} \times E_{\beta})\cdot f+\frac{1}{4\pi i}\int_{(1/2)}\Lambda (\overline{s},E_{\alpha}\times E_{\beta})\cdot E_s\,ds$$ where this $S\in L^2(\Gamma\backslash \h)$.\\

\tab Recall that we have found the spectral decomposition for $S=E_{\alpha}\cdot E_{\beta} - \sum_i c_i E_i+ \mathbbm{1}_{\alpha=\beta}\cdot \frac{\pi}{3}C_{\alpha}$ where $\mathbbm{1}_{\alpha=\beta}=  \left\{
     \begin{array}{lr}
       1 & \text{if }\alpha=\beta\\
       0 &  \text{if }\alpha\neq\beta
     \end{array}
   \right.$ but we want to use this to solve $\displaystyle(\Delta-\lambda_s)u_w=E_{\alpha}\cdot E_{\beta}$ on $\Gamma\backslash SL_2(\R)$.
%Through the previous computation we see that for each combination $\sum_i c_i E_i$, both $\langle \sum_i c_i E_i,f\rangle_{L^2}=0$ and $\langle \sum_i c_i E_i,E_s\rangle_{L^2}=0$.  Thus we have that 

 $$E_{\alpha}\cdot E_{\beta}=\sum_i c_{i} E_{s_i}-  \mathbbm{1}_{\alpha=\beta}\cdot \frac{\pi}{3}C_{\alpha}+\sum_{f\text{ cfm}} \Lambda(\alpha,\overline{f} \times E_{\beta})\cdot f+\frac{1}{4\pi i}\int_{(1/2)}\Lambda (\overline{s},E_{\alpha}\times E_{\beta})\cdot E_s\,ds$$ 
 where 
 $$\displaystyle\sum_i c_{i} E_{s_i}=
        E_{\alpha+\beta}+c_{\alpha} E_{1-\alpha+\beta}$$ 
on $ 1/2 \leq\text{Re}(\alpha)< \text{Re}(\alpha)+1/2<\text{Re}(\beta)$, 
        
          $$\displaystyle\sum_i c_{i} E_{s_i}=E_{\alpha+\beta}+c_{\beta} E_{1+\alpha-\beta}+c_{\alpha}\cdot E_{1-\alpha+\beta}$$ 
           on $1/2\leq\text{Re}(\alpha)\leq\text{Re}(\beta)<  \text{Re}(\alpha)+1/2$ and $ \text{Re}(\alpha+\beta)> 3/2$ but $\alpha\neq \beta$, 
       $$\displaystyle\sum_i c_{i} E_{s_i}=E_{\alpha+\beta}+c_{\beta} E_{1+\alpha-\beta}+c_{\alpha} E_{1-\alpha+\beta} +c_{\alpha}c_{\beta} E_{2-\alpha-\beta}$$ on 
 $1/2\leq\text{Re}(\alpha)\leq \text{Re}(\beta)<  \text{Re}(\alpha)+1/2$ and $\text{Re}(\alpha+\beta)< 3/2$ but $\alpha\neq \beta$,
        $$\displaystyle\sum_i c_{i} E_{s_i}=E_{\alpha}^2+2c_{\alpha} E_1^*$$ when $\alpha=\beta$ and $\text{Re}(\alpha)>3/4$ , and
        $$\displaystyle\sum_i c_{i} E_{s_i}=E_{\alpha}^2+2c_{\alpha} E_1^*+ c_{\alpha}^2 E_{2-2\alpha}$$ when $\alpha=\beta$ and $1/2\leq \text{Re}(\alpha)<3/4.$\\

\tab Now we can use this as well as the spectral relation in Section \ref{app} to solve $\displaystyle(\Delta-\lambda_w)u_w=E_{\alpha}\cdot E_{\beta}$ on $\Gamma\backslash SL_2(\R)$.  
In $\text{Re}(w)>1/2$, for  $\alpha, \beta\in C$, the solution is given by
$$u_w=\sum_i \frac{c_{i} E_{s_i}}{\lambda_{s_i}-\lambda_w}-  \mathbbm{1}_{\alpha=\beta}\cdot \frac{\frac{\pi}{3}C_{\alpha}}{\lambda_{1}-\lambda_w}+\sum_{f\text{ cfm}} \frac{\Lambda(\alpha,\overline{f} \times E_{\beta})\cdot f}{\lambda_{s_f}-\lambda_w}+\frac{1}{4\pi i}\int_{(1/2)}\Lambda (\overline{s},E_{\alpha}\times E_{\beta})\cdot \frac{E_s}{\lambda_{s}-\lambda_w}\,ds$$ and lies in $H^2(\Gamma\backslash \mathfrak{H})\oplus \mathcal{E}(\Gamma\backslash \mathfrak{H})$.
Also, note that the automorphic Sobolev space $H^k$ in which this solution exists is also defined in Section \ref{app}. This concludes our proof of Theorem \ref{main}.\\

\subsection{Meromorphic Continuation of the Solution}\label{mero}

We will now meromorphically continue the solution $$u_w=\sum_i \frac{c_{i} E_{s_i}}{\lambda_{s_i}-\lambda_w}-  \mathbbm{1}_{\alpha=\beta}\cdot \frac{\frac{\pi}{3}C_{\alpha}}{\lambda_{1}-\lambda_w}+\sum_{f\text{ cfm}} \frac{\Lambda(\alpha,\overline{f} \times E_{\beta})\cdot f}{\lambda_{s_f}-\lambda_w}+\frac{1}{4\pi i}\int_{(1/2)}\Lambda (\overline{s},E_{\alpha}\times E_{\beta})\cdot \frac{E_s}{\lambda_{s}-\lambda_w}\,ds$$
in $V:=H^2(\Gamma\backslash \mathfrak{H})\oplus\mathcal{E}(\Gamma\backslash \mathfrak{H})$ which is initially defined on $\text{Re}(w)>1/2$. \\

\tab Observe that the first three terms of $u_w$ will have meromorphic continuation.  Since Eisenstein series (and also constants) are constant in $w$ and we are only dividing by at most a simple pole given by these discrete combinations of $\alpha$ and $\beta$, the first two terms have meromorphic continuation.  In the third term of $u_w$, again the $L$-function and cuspform will be constant in $w$.  Furthermore, we can see that the eigenvalues attached to cuspforms are also discrete by examining the pre-trace formula:

$$\sum_{F:~:~|\lambda_F|\leq T}|F(z_o)|^2+\frac{| \langle F, 1 \rangle |^2}{\langle 1,1 \rangle}+\frac{1}{4\pi i}\int_{(1/2)}|E_s(z_o)|^2\, ds\ll_CT^2$$\\

\tab For the fourth term, it is important to note that the visual symmetry on the continuous spectrum in misleading.  More work must be done to meromorphically continue this piece for the spectral expansion of $u$. These meromorphic continuations do not exist in $V$ but in a larger space $M$ of moderate-growth functions that includes Eisenstein series.  For this reason meromorphic continuation is best described in terms of vector-valued integrals.  This will require a bit of topological set-up.\\

\tab Define $$\displaystyle M:=\left\{f\in C^o(\Gamma\backslash\mathfrak{H})~\big|~\sup_{\text{Im}(z)\geq \sqrt{3}/2} y^r\cdot|f(x+iy)|<\infty\text{ for some } r\in\mathbb{R}\right\}$$ The topology on $M$ is a  an inductive limit of Banach spaces \\$\displaystyle M_o^r=\left\{f\in C^o(\Gamma\backslash\mathfrak{H})~\big|~\sup_{\text{Im}(z)\geq \sqrt{3}/2} y^r\cdot|f(x+iy)|<\infty\text{ for } r\in\mathbb{R}\right\}$ obtained by the completion of $C^o(\Gamma\backslash\mathfrak{H})$ with respect to norms $\displaystyle|f|_{M_o^r}:=\sup_{\text{Im}(z)\geq \sqrt{3}/2} y^r\cdot|f(x+iy)|$ for $f\in M_o^r$.  Thus $M$ is a strict colimit in the locally convex category of Banach spaces so is quasi-complete and locally convex.\\

\tab Let $\Phi:M\to N$ be a continuous linear map to a quasi-complete locally convex topological vector space $N$ and consider the $N$-valued integrals
$$u_{w,\Phi}=\sum_i \frac{c_{i} \Phi E_{s_i}}{\lambda_{s_i}-\lambda_w}-  \mathbbm{1}_{\alpha=\beta}\cdot \frac{\frac{\pi}{3}C_{\alpha}\cdot \Phi(1)}{\lambda_{1}-\lambda_w}+\sum_{f\text{ cfm}} \frac{\Lambda(\alpha,\overline{f} \times E_{\beta})\cdot \Phi f}{\lambda_{s_f}-\lambda_w}\phantom{weeeeeeeeeeeee}$$
$$\phantom{weeeeeeeeeeeeeeeeeeeeeeeeeeeeeeeeeeee}+\frac{1}{4\pi i}\int_{(1/2)}\Lambda (\overline{s},E_{\alpha}\times E_{\beta})\cdot \frac{\Phi E_s}{\lambda_{s}-\lambda_w}\,ds$$ 
Of course, for $\Phi$ the identity map $M\to M$ gives $u_w$ itself and we anticipate that $\Phi(u_w)=u_{w,\Phi}$.\\

{\lem \label{philem} $\Phi(u_w)=u_{w,\Phi}$ in the region $\text{Re}(w)>1/2$.}

\begin{proof} Observe that 
$$\Phi(u_{w})=\sum_i \frac{c_{i} \Phi E_{s_i}}{\lambda_{s_i}-\lambda_w}-  \mathbbm{1}_{\alpha=\beta}\cdot \frac{\frac{\pi}{3}C_{\alpha}\cdot \Phi(1)}{\lambda_{1}-\lambda_w}+\sum_{f\text{ cfm}} \frac{\Lambda(\alpha,\overline{f} \times E_{\beta})\cdot \Phi f}{\lambda_{s_f}-\lambda_w}\phantom{weeeeeeeeeeeee}$$
$$\phantom{weeeeeeeeeeeeeeeeeeeeeeeeeeeeeeeeeeee}+\frac{1}{4\pi i}\Phi\left(\int_{(1/2)}\Lambda (\overline{s},E_{\alpha}\times E_{\beta})\cdot \frac{ E_s}{\lambda_{s}-\lambda_w}\,ds\right)$$ In $\text{Re}(w)>1/2$, the integral for $u_w$ is a $v$-valued holomorphic function in $w$. We have In that region, due to the properties of compactly supported continuous-integrand Gelfand-Pettis integrals \cite{Garrett2011},
$$\Phi\left(\int_{(1/2)}\Lambda (\overline{s},E_{\alpha}\times E_{\beta})\cdot \frac{ E_s}{\lambda_{s}-\lambda_w}\,ds\right)
=\Phi\left(\lim_{T\to\infty}\int_{|\text{Im}(s)|\leq T}\Lambda (\overline{s},E_{\alpha}\times E_{\beta})\cdot \frac{ E_s}{\lambda_{s}-\lambda_w}\,ds\right)$$
$$=\lim_{T\to\infty}\Phi\left(\int_{|\text{Im}(s)|\leq T}\Lambda (\overline{s},E_{\alpha}\times E_{\beta})\cdot \frac{ E_s}{\lambda_{s}-\lambda_w}\,ds\right)
=\lim_{T\to\infty}\int_{|\text{Im}(s)|\leq T}\Lambda (\overline{s},E_{\alpha}\times E_{\beta})\cdot \frac{ \Phi E_s}{\lambda_{s}-\lambda_w}\,ds$$
$$=\int_{(1/2)}\Lambda (\overline{s},E_{\alpha}\times E_{\beta})\cdot \frac{ \Phi E_s}{\lambda_{s}-\lambda_w}\,ds$$ since the limit is approached in $V\subset  M$.\\

\end{proof}

{\thm With continuous linear $\Phi:M\to N$ with $N$ quasi-complete  and locally convex, the $\Phi M$-valued function $w\mapsto u_{w,\Phi}$ has meromorphic continuation as an $N$-valued function of $w$.  Explicitly, the function

$$J_{w,\Phi}
=\sum_i \frac{c_{i} \Phi E_{s_i}}{\lambda_{s_i}-\lambda_w}-  \mathbbm{1}_{\alpha=\beta}\cdot \frac{\frac{\pi}{3}C_{\alpha}\cdot \Phi(1)}{\lambda_{1}-\lambda_w}+\sum_{f\text{ cfm}} \frac{\Lambda(\alpha,\overline{f} \times E_{\beta})\cdot \Phi f}{\lambda_{s_f}-\lambda_w}\phantom{weeeeeeeeeeeeeeeeeeeeeeeeeeeee}$$
$$\phantom{weeeeeeeeeeeee}+\frac{1}{4\pi i}\int_{(1/2)}\frac{\Lambda (1-s,E_{\alpha}\times E_{\beta})\cdot \Phi E_s - \Lambda (1-w,E_{\alpha}\times E_{\beta}) \cdot\Phi E_w}{\lambda_{s}-\lambda_w}\,ds$$
 has a meromorphic continuation to an $N$-valued function with the functional equation \\$J_{1-w,\Phi}=J_{w,\Phi}$
 and 
 $$u_{w,\Phi}=J_{w,\Phi}+\frac{\Lambda (1-w,E_{\alpha}\times E_{\beta}) \cdot\Phi E_w}{2(1-2w)}$$.}

\begin{proof} From Lemma \ref{philem}, in $\text{Re}(w)>1/2$ the expression for $u_{w,\Phi}$ converges as an $N$-valued integral.  The meromorphic continuation of $u_{w,\Phi}$ will be obtained through rearranging the integral.\\

\tab First, in $\text{Re}(w)>1/2$ we add and subtract to obtain
$$u_{w,\Phi}=\sum_i \frac{c_{i} \Phi E_{s_i}}{\lambda_{s_i}-\lambda_w}-  \mathbbm{1}_{\alpha=\beta}\cdot \frac{\frac{\pi}{3}C_{\alpha}\cdot \Phi(1)}{\lambda_{1}-\lambda_w}+\sum_{f\text{ cfm}} \frac{\Lambda(\alpha,\overline{f} \times E_{\beta})\cdot \Phi f}{\lambda_{s_f}-\lambda_w}
\phantom{weeeeeeeeeeeeeeeeeeeeeeeeeeeeeeeeeee}$$
$$\phantom{weeeeeeeeeeeeeeeeeeeeeeeee}+\frac{1}{4\pi i}\int_{(1/2)} \frac{\Lambda (1-s,E_{\alpha}\times E_{\beta})\cdot\Phi E_s}{\lambda_{s}-\lambda_w}\,ds$$ 
$$=\sum_i \frac{c_{i} \Phi E_{s_i}}{\lambda_{s_i}-\lambda_w}-  \mathbbm{1}_{\alpha=\beta}\cdot \frac{\frac{\pi}{3}C_{\alpha}\cdot \Phi(1)}{\lambda_{1}-\lambda_w}+\sum_{f\text{ cfm}} \frac{\Lambda(\alpha,\overline{f} \times E_{\beta})\cdot \Phi f}{\lambda_{s_f}-\lambda_w}\phantom{weeeeeeeeeeeeeeeeeeeeeeeeeeeee}$$
$$\phantom{weeeeee}+\frac{1}{4\pi i}\int_{(1/2)} \frac{\Lambda (1-s,E_{\alpha}\times E_{\beta})\cdot\Phi E_s - \Lambda (1-w,E_{\alpha}\times E_{\beta}) \cdot\Phi E_w}{\lambda_{s}-\lambda_w}\,ds$$
$$\phantom{weeeeeeeeeeeeeeeeeeeeeeeeeee} + \Lambda (1-w,E_{\alpha}\times E_{\beta}) \cdot\Phi E_w\frac{1}{4\pi i}\int_{(1/2)}\frac{1}{\lambda_{s}-\lambda_w}\,ds$$ 
$$=J_{w,\Phi}+ \Lambda (1-w,E_{\alpha}\times E_{\beta}) \cdot\Phi E_w \cdot\frac{1}{4\pi i}\int_{(1/2)}\frac{1}{\lambda_{s}-\lambda_w}\,ds\phantom{weeeeeeeeeeee}$$ 

\tab By residues, 
$$ \Lambda (1-w,E_{\alpha}\times E_{\beta}) \cdot\Phi E_w \cdot\frac{1}{4\pi i}\int_{(1/2)}\frac{1}{\lambda_{s}-\lambda_w}\,ds
= \Lambda (1-w,E_{\alpha}\times E_{\beta}) \cdot\Phi E_w \cdot\left( -\frac{1}{2}\cdot\text{Res}_{s=w}\frac{1}{\lambda_{s}-\lambda_w}\right)$$
$$ =\frac{\Lambda (1-w,E_{\alpha}\times E_{\beta}) \cdot\Phi E_w}{2(1-2w)}$$\\

\tab Since $\Lambda (1-w,E_{\alpha}\times E_{\beta})$ is a meromorphic $\mathbb{C}$-valued function and $w\mapsto \Phi E_w$ is a meromorphic $N$-valued function, $\Lambda (1-w,E_{\alpha}\times E_{\beta}) \cdot\Phi E_w$ is a meromorphic $N$-valued function with a meromorphic continuation from the meromorphic continuation of Eisenstein series and $\Lambda(w,E_{\alpha}\times E_{\beta})$. Observe that although the Eisenstein series is invariant under $w\mapsto 1-w$, the denominator is skew-symmetric.\\

\tab We will now meromorphically continue the integral $J_{w,\Phi}$.  First constrain $w$ so that is lies in a fixed compact set $C$ and take $T$ large enough so that $T\geq 2|w|$  for all $w\in C$.  First, for $\text{Re}(w)>1/2$ and $s=\frac{1}{2}+it$, we make an attempt to cancel the vanishing denominator when $s$ is close to $w$ by rearranging

$$J_{w,\Phi}- \left(\sum_i \frac{c_{i} \Phi E_{s_i}}{\lambda_{s_i}-\lambda_w}- \mathbbm{1}_{\alpha=\beta}\cdot \frac{\frac{\pi}{3}C_{\alpha}\cdot \Phi(1)}{\lambda_{1}-\lambda_w}+\sum_{f\text{ cfm}} \frac{\Lambda(\alpha,\overline{f} \times E_{\beta})\cdot \Phi f}{\lambda_{s_f}-\lambda_w}\right) \phantom{weeeeeeeeeeeeeeeeeeewwwweeeeeeee}$$
$$=\frac{1}{4\pi i}\int_{(1/2)}\frac{\Lambda (1-s,E_{\alpha}\times E_{\beta})\cdot \Phi E_s - \Lambda (1-w,E_{\alpha}\times E_{\beta}) \cdot\Phi E_w}{\lambda_{s}-\lambda_w}\,ds$$

$$=\frac{1}{4\pi i}\int_{|t|\geq T}\frac{\Lambda (1-s,E_{\alpha}\times E_{\beta})\cdot \Phi E_s }{\lambda_{s}-\lambda_w}\,ds
-\Lambda (1-w,E_{\alpha}\times E_{\beta}) \cdot\Phi E_w\cdot \frac{1}{4\pi i}\int_{|t|\geq T}\frac{ 1 }{\lambda_{s}-\lambda_w}\,ds$$
$$\phantom{weeeeeeeeeeeeeeeeeeeeee}+\frac{1}{4\pi i}\int_{|t|\leq T}\frac{\Lambda (1-s,E_{\alpha}\times E_{\beta})\cdot \Phi E_s - \Lambda (1-w,E_{\alpha}\times E_{\beta}) \cdot\Phi E_w}{\lambda_{s}-\lambda_w}\,ds$$\\

\tab The meromorphy of the leading integral is understood via the Plancherel Theorem on the continuous automorphic spectrum. Up to constants, the Plancherel Theorem for $L^2$ states that  $A(t)\in L^2(\mathbb{R})$ the spectral synthesis integral 
$$B=\frac{1}{4\pi}\int_{-\infty}^{\infty}A(t)\cdot E_s\,dt$$
for $z\in\h$ produces a function in $H^0$ and the map the $A\mapsto B$ gives an isometry. \\

\tab Observe that $\Lambda (1-s,E_{\alpha}\times E_{\beta})\in L^2(\frac{1}{2}+i\mathbb{R})$ since $S\in L^2(\Gamma\backslash\h)$ and $\Lambda (\overline{s},E_{\alpha}\times E_{\beta})= \langle S, E_s \rangle_{L^2(\Gamma\backslash\h)}$.  Hence for $w$ in a fixed compact, $\displaystyle{\frac{\Lambda (1-s,E_{\alpha}\times E_{\beta})}{\lambda_{s}-\lambda_w}\in L^2\left(\frac{1}{2}+i\mathbb{R}\right)}$.  Composition with Plancherel isometry  shows that 
$$w\mapsto \frac{1}{4\pi i}\int_{|t|\geq T}\frac{\Lambda (1-s,E_{\alpha}\times E_{\beta})\cdot  E_s }{\lambda_{s}-\lambda_w}\,ds$$ is a meromorphic $L^2(\frac{1}{2}+i\mathbb{R})$-valued function in $w$ in the fixed compact.  Now, since $|w|\ll T$ the meromophic continuation is given by the same integral, the invariance of the integrand under $w\mapsto 1-w$ remains.  \\

%***Since $L^2$ functions in $H^0$  need not have moderate growth at infinity, to have continuous inclusion to $M$, it is necessary to use $H^{1+\epsilon}$.  For $H^k$, Plancherel becomes the following: Let $X_k$ be the meausurable functions $t\to A(t)$ (modulo null functions) in $\mathbb{R}$ such that $\int_{\mathbb{R}}|A(t)|^2\cdot (\frac{1}{4}+t^2)^k\, dt<\infty$.  Then the spectral synthesis integral gives a function $B$ in $X_k$ and $A\mapsto B$ gives an isometry $V_k\to X_k$.  

\tab In the second summand,
$$\Lambda (1-w,E_{\alpha}\times E_{\beta}) \cdot\Phi E_w\cdot \frac{1}{4\pi i}\int_{|t|\geq T}\frac{ 1 }{\lambda_{s}-\lambda_w}\,ds$$
the leading coefficient $\Lambda (1-w,E_{\alpha}\times E_{\beta}) \cdot\Phi E_w$ has meromorphic continuation and is invariant under $w\mapsto 1-w$.  Since $|w|\ll T$ the meromorphic continuation of the integrand is given by the same integral and the invariance under $w\mapsto 1-w$ remains.\\

\tab  Finally, in the remaining summand,
$$\frac{1}{4\pi i}\int_{|t|\leq T}\frac{\Lambda (1-s,E_{\alpha}\times E_{\beta})\cdot \Phi E_s - \Lambda (1-w,E_{\alpha}\times E_{\beta}) \cdot\Phi E_w}{\lambda_{s}-\lambda_w}\,ds$$ is a compactly-supported vector-valued integral.  In order to show that the integral is a meromorphic $N$-valued function of $w$, we will use the Gelfand-Pettis criterion for existence of a weak integral.  \\

\tab Let $\text{Hol}(\Omega, N)$  be the topological vector space of holomorphic $N$-valued functions on a fixed open $\Omega$ which avoids the poles if $E_w$ and has compact closure $C$.  It suffices to show that the integrand extends to a continuous $\text{Hol}(\Omega, N)$-valued function of $s$ where $\text{Hol}(\Omega, N)$ has the natural quasi-complete locally convex topology from Corollary \ref{hol}. To show that the integral extends to a holomorphic (and hence continuous) $\text{Hol}(\Omega, N)$-valued function of $s$, it suffices to show that the integral extends to a holomorphic $N$-valued function of two complex variables $s$ and $w$.\\

\tab By Cauchy-Goursat theory for vector-valued holomorphic functions (see Appendix), near a point $s_o$, the $N$-valued function $s\mapsto \Phi E_s$ has a convergent power series expansion 
$$\Phi E_s = A_0+A_1(s-s_o)+A_2(s-s_o)^2+\dots$$
with $A_i\in N$ and so $\Lambda (1-s,E_{\alpha}\times E_{\beta}) \cdot \Phi E_s$ has power series expansion
$$\Lambda (1-s,E_{\alpha}\times E_{\beta})\cdot \Phi E_s = B_0+B_1(s-s_o)+B_2(s-s_o)^2+\dots$$ 
for some $B_n\in N$. Then we have 
$$\Lambda (1-s,E_{\alpha}\times E_{\beta})\cdot \Phi E_s - \Lambda (1-w,E_{\alpha}\times E_{\beta}) \cdot\Phi E_w = B_1((s-s_o)-(w-s_o))+B_2((s-s_o)^2-(w-s_o)^2)+\dots$$
$$= ((s-s_o)-(w-s_o))\cdot \left(B_1+B_2((s-s_o)-(w-s_o))+\dots\right)$$
$$= (s-w)\cdot \left(B_1+B_2((s-s_o)-(w-s_o))+\dots\right)$$
where $ \left(B_1+B_2((s-s_o)-(w-s_o))+\dots\right)$ is a convergent power series in $s-s_o$ and $w-s_o$.  Thus the integrand, initially defined only for $s\neq w$ extends to a holomorphic $N$-valued function $F(s,w)$ including the diagonal $s=w=\frac{1}{2}+it$ with $|t|\leq T$.  Thus the $\text{Hol}(\Omega, N)$-valued function $f(s)$ given by $f(s)(w)=F(s,w)$ is holomorphic in $w$.  Thus there is a Gelfand-Pettis integral $\int_{|t|\leq T}f(\frac{1}{2}+it)\, dt$ in $\text{Hol}(\Omega, N)$ as desired.  Thus we have shown the meromorphic continuation.  The $w\mapsto 1-w$ symmetry is retained by the extension of the integral to the diagonal.\\

\end{proof}

%----------------
%----------------
%----------------
\section{Appendix}\label{app}

\subsection{Spectral relation}
The following can be found many places including P. Garrett's \cite{Garrett2011sob} and A. DeCelles' \cite{DeCelles2016}.
%Let $\displaystyle D:= \{\psi_{\varphi}(z)=\sum_{\gamma\in P\backslash \Gamma}\varphi(\text{Im}(\gamma z))~|~\varphi\in C_c^{\infty}(0,\infty)\}$.
\begin{thm}\label{specrel} For $f\in C_c^{\infty}(\Gamma\backslash\h)$, then $\displaystyle\langle\Delta f , E_s\rangle_{L^2(\Gamma\backslash \mathfrak{H})}  = \lambda_s \cdot\langle f , E_s\rangle_{L^2(\Gamma\backslash \mathfrak{H})}$. \end{thm}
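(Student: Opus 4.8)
The plan is to combine two standard facts: that each Eisenstein series is a $\Delta$-eigenfunction, and that the invariant Laplacian is symmetric against the invariant measure $\frac{dx\,dy}{y^2}$ once one of the two functions in the pairing has compact support.

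First I would record that $\Delta E_s = \lambda_s E_s$. This follows directly from the defining series $E_s(z)=\sum_{\gamma\in P\backslash\Gamma}\text{Im}(\gamma z)^s$: since $\Delta$ is $SL_2(\R)$-invariant it commutes with the summation over $\gamma\in P\backslash\Gamma$, and a one-line computation gives $\Delta y^s = y^2\,\partial_y^2\, y^s = s(s-1)\,y^s = \lambda_s y^s$, so each term — and hence the locally uniformly convergent, termwise-differentiable series — is a $\lambda_s$-eigenfunction. Equivalently, one reads this off the Fourier–Whittaker expansion recorded in Section \ref{background}: the pieces $y^s$, $y^{1-s}$, and the Whittaker terms $W_s(|n|y)\,e^{2\pi i n x}$ each solve the eigenvalue equation with eigenvalue $\lambda_s$.

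Next I would invoke Green's identity for $\Delta = y^2(\partial_x^2+\partial_y^2)$ with respect to $\frac{dx\,dy}{y^2}$. Because $f\in C_c^\infty(\Gamma\backslash\h)$, its support is a compact subset of the quotient bounded away from the cusp, so two integrations by parts produce no boundary contribution and yield
$$\int_{\Gamma\backslash\h}(\Delta f)\cdot\overline{E_s}\,\frac{dx\,dy}{y^2} = \int_{\Gamma\backslash\h} f\cdot\overline{\Delta E_s}\,\frac{dx\,dy}{y^2},$$
where $\overline{\Delta E_s}=\Delta\overline{E_s}$ since $\Delta$ has real coefficients. This is the crucial step, and the only genuinely delicate point: it is exactly the compact support of $f$ that licenses discarding the boundary term despite the fact that $E_s$ does not decay toward the cusp, so no $L^2$-convergence argument or Arthur truncation is needed here.

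Finally I would substitute the eigenfunction relation. Writing $\Delta\overline{E_s}=\overline{\lambda_s}\,\overline{E_s}$ gives $\langle\Delta f,E_s\rangle = \overline{\lambda_s}\,\langle f,E_s\rangle$; since the spectral relation is applied on the critical line $\text{Re}(s)=1/2$, where $\lambda_s=s(s-1)$ is real and hence $\overline{\lambda_s}=\lambda_s$, the stated identity $\langle\Delta f,E_s\rangle = \lambda_s\,\langle f,E_s\rangle$ follows. Everything apart from the vanishing of the boundary term is routine, and that vanishing is immediate from the hypothesis $f\in C_c^\infty(\Gamma\backslash\h)$.
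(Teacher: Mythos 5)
Your proof is correct and follows essentially the same route as the paper's: both rest on the eigenfunction property of $E_s$ together with integration by parts, justified by the compact support of $f$, to move $\Delta$ onto the Eisenstein series. The only cosmetic difference is that you track conjugation via $\Delta\overline{E_s}=\overline{\lambda_s}\,\overline{E_s}$ and then use $\overline{\lambda_s}=\lambda_s$ on $\text{Re}(s)=1/2$, whereas the paper writes $\overline{E_s}=E_{1-s}$ and uses $\lambda_{1-s}=\lambda_s$; these amount to the same observation.
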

\begin{proof}Let $f\in C_c^{\infty}(\Gamma\backslash\h)$.  %$\displaystyle D= \{\psi_{\varphi}(z)=\sum_{\gamma\in P\backslash \Gamma}\varphi(\text{Im}(\gamma z))~|~\varphi\in C_c^{\infty}(0,\infty)\}$
  Note that the symmetry of $\Delta$ and compact support of elements of $D$ allows integration by parts.  Then we have the following spectral relation
$$\langle\Delta f , E_s\rangle_{L^2(\Gamma\backslash \mathfrak{H})}
 = \int_{\Gamma\backslash \mathfrak{H}}\Delta f(z)\cdot E_{1-s}(z)\, \frac{dx\,dy}{y^2}
 = \int_{\Gamma\backslash \mathfrak{H}} f(z)\cdot \Delta E_{1-s}(z)\, \frac{dx\,dy}{y^2}$$ 
 $$ = \int_{\Gamma\backslash \mathfrak{H}} f(z)\cdot\lambda_s E_{1-s}(z)\, \frac{dx\,dy}{y^2}
 = \lambda_s \langle f , E_s\rangle_{L^2(\Gamma\backslash \mathfrak{H})}$$ \end{proof}
 
 For $0\leq k\in\mathbb{Z}$, the $k^{th}$-Sobolev norm on $ C_c^{\infty}(\Gamma\backslash\h)$ is given by $$|f|_k^2:=\langle(1-\Delta)^k f,f \rangle_{L^2(\Gamma\backslash \mathfrak{H})}$$ and $H^k(\Gamma\backslash \mathfrak{H})$ is the completion of $C_c^{\infty}(\Gamma\backslash\h)$ with respect to $|\cdot|_k$.\\
 
 \begin{thm} There is a continuous injection $H^k(\Gamma\backslash \mathfrak{H})\to H^{k+1}(\Gamma\backslash \mathfrak{H})$ with dense image.\end{thm}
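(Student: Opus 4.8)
The plan is to realize the stated arrow $H^k\to H^{k+1}$ not through the set-theoretic identity on $C_c^\infty(\Gamma\backslash\h)$ --- which, since $|\varphi|_k\le|\varphi|_{k+1}$, only produces a bounded map in the \emph{opposite} direction $H^{k+1}\to H^k$ --- but through the smoothing operator $(1-\Delta)^{-1/2}$ supplied by functional calculus. First I would record that $\Delta$ is essentially self-adjoint on $C_c^\infty(\Gamma\backslash\h)$, so that $1-\Delta$ is a self-adjoint operator whose spectrum is real and contained in $[1,\infty)$: on the automorphic spectrum described in Section \ref{background} its eigenvalues $1-\lambda_s=1-s(s-1)$ are $\ge 1$ (equal to $1$ only on the constants, $\tfrac54+t^2$ on the continuous spectrum $s=\tfrac12+it$, and $>1$ on the cuspidal part). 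The spectral theorem then furnishes mutually inverse bounded operators $(1-\Delta)^{\mp 1/2}$, acting as multiplication by $(1-\lambda)^{\mp 1/2}$ against the spectral measure.

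Next I would verify that $T:=(1-\Delta)^{-1/2}$ is an isometry from the $k$-norm to the $(k+1)$-norm. For $\varphi\in C_c^\infty(\Gamma\backslash\h)$, self-adjointness of $1-\Delta$ (the spectral relation of Theorem \ref{specrel}) gives
\[
|T\varphi|_{k+1}^2=\langle (1-\Delta)^{k+1}(1-\Delta)^{-1/2}\varphi,\,(1-\Delta)^{-1/2}\varphi\rangle_{L^2}=\langle (1-\Delta)^{k}\varphi,\varphi\rangle_{L^2}=|\varphi|_k^2 ,
\]
so $T$ sends $|\cdot|_k$-Cauchy sequences to $|\cdot|_{k+1}$-Cauchy sequences and therefore extends to a well-defined linear isometry $T\colon H^k\to H^{k+1}$. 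Being an isometry, $T$ is automatically continuous and injective, since $Tf=0$ forces $|f|_k=|Tf|_{k+1}=0$ and hence $f=0$.

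Finally I would settle density of the image. The identical bookkeeping shows $(1-\Delta)^{1/2}$ is an isometry $H^{k+1}\to H^k$ inverse to $T$, so $T$ is in fact onto and its image is trivially dense. If one wants the literal ``continuous injection with dense (proper) image'' flavour, I would instead take $T=(1-\Delta)^{-1}$, which the same computation makes an isometry $H^k\to H^{k+2}$, and postcompose with the norm-nonincreasing inclusion $H^{k+2}\hookrightarrow H^{k+1}$ (valid because $|\cdot|_{k+1}\le|\cdot|_{k+2}$); the resulting image is $H^{k+2}$, which contains $C_c^\infty(\Gamma\backslash\h)$ and is dense in $H^{k+1}$ by definition of the latter as a completion. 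The main obstacle is analytic rather than formal: rigorously justifying the functional calculus on the non-compact quotient $\Gamma\backslash\h$, namely the essential self-adjointness of $\Delta$ on $C_c^\infty$ in the presence of the cusp and the guarantee that $(1-\Delta)^{-1/2}$ lands in $H^{k+1}$ and not merely in $L^2$. I expect to dispatch both via the automorphic spectral decomposition already used throughout the paper, which identifies $H^k$ isometrically with the spectral-side weighted $L^2$-space of weight $(1-\lambda)^{k}$; on that model each $(1-\Delta)^{s}$ is the scalar multiplier $(1-\lambda)^{s}$, and the assertion reduces to the tautology that multiplication by $(1-\lambda)^{-1/2}$ is a weight-shifting isometry.
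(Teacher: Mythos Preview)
Your argument is correct, but it addresses a different (and harder) statement than the paper actually intends. The theorem as printed has the arrow backwards: the paper's own proof establishes the standard inclusion $H^{k+1}\to H^k$ via the identity on $C_c^\infty$, after the elementary inequality $|f|_k^2\le|f|_{k+1}^2$. That inequality is obtained from the positivity $\langle(-\Delta)^n f,f\rangle\ge0$ for all $n\ge0$ (split into even and odd $n$ using symmetry of $\Delta$), which gives
\[
\langle(1-\Delta)^{k+1}f,f\rangle=\langle(1-\Delta)^kf,f\rangle+\langle(1-\Delta)^k(-\Delta)f,f\rangle\ge|f|_k^2,
\]
since $(1-\Delta)^k(-\Delta)$ is a polynomial in $-\Delta$ with nonnegative coefficients. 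Density of the image follows because $C_c^\infty$ is dense in both completions by definition.

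You noticed the mismatch and, taking the printed direction $H^k\to H^{k+1}$ at face value, built the smoothing isometry $(1-\Delta)^{-1/2}$ via functional calculus. That route is sound --- indeed it yields an isometric \emph{isomorphism}, stronger than a dense continuous injection --- but it imports essential self-adjointness of $\Delta$ on the non-compact quotient and the spectral characterization of the $H^k$, none of which the paper's positivity argument needs. Your caution that $(1-\Delta)^{-1/2}$ does not preserve $C_c^\infty$, so that the isometry must ultimately be verified on the spectral side rather than by a one-line inner-product computation on test functions, is exactly right and is the genuine content of your approach. For the statement the paper means to prove, however, the two-line positivity argument is the natural one.
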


\begin{proof} Let $f\in C_c^{\infty}(\Gamma\backslash\h)$ then $\langle -\Delta f, f\rangle\geq 0$. We would like to show that for a polynomial $p$ with non-negative real coefficients  $\langle p(-\Delta) f, f\rangle\geq 0$.  It suffices to show that  $\langle (-\Delta)^n f, f\rangle\geq 0$.

For $n=2m$ even, 
$$\langle (-\Delta)^n f, f\rangle = \langle (-\Delta)^{2m} f, f\rangle=\langle (-\Delta)^m f,  (-\Delta)^mf\rangle\geq 0.$$
For $n=2m+1$ odd, 
$$\langle (-\Delta)^n f, f\rangle = \langle (-\Delta)^{2m+1} f, f\rangle=\langle  (-\Delta)((-\Delta)^m f),  (-\Delta)^mf\rangle\geq 0.$$
This gives $$|f|^2_{k+1}=  \langle (1-\Delta)^{k+1} f, f\rangle=  \langle (1+(-\Delta))^{k} f, f\rangle+ \langle (1+(-\Delta))^{k}(-\Delta) f, f\rangle$$
$$\geq \langle (1+(-\Delta))^{k} f, f\rangle +0 = |f|^2_k$$

Thus the identity map $C_c^{\infty}(\Gamma\backslash\h)$ extends to a continuous injection $H^{k+1}\to H^k$ since $C_c^{\infty}(\Gamma\backslash\h)$ is dense in both. Furthermore, the image is dense.\\

\end{proof}

\begin{thm} The differential operator $\Delta: C_c^{\infty}(\Gamma\backslash\h)\to C_c^{\infty}(\Gamma\backslash\h)$ is continuous when the source is given the $H^{k+2}$ topology and the target is given the $H^k$ topology for $0\geq k\in\mathbb{Z}$.\end{thm}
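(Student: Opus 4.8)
The plan is to prove the single estimate $|\Delta f|_k \le |f|_{k+2}$ for every $f \in C_c^\infty(\Gamma\backslash\h)$; since $\Delta$ is linear, such a bound is precisely the asserted continuity (with operator norm at most $1$). First I would record that on $C_c^\infty(\Gamma\backslash\h)$ the operators $\Delta$ and $1-\Delta$ are symmetric under the $L^2$-pairing — by integration by parts, using the compact support of the arguments, exactly as in the proof of Theorem \ref{specrel} — and that they commute, being polynomials in $\Delta$. Consequently
$$|\Delta f|_k^2 = \langle (1-\Delta)^k \Delta f, \Delta f\rangle = \langle (1-\Delta)^k \Delta^2 f, f\rangle,$$
where the second equality moves one factor of $\Delta$ across the pairing by symmetry.

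Next I would compare this against $|f|_{k+2}^2 = \langle (1-\Delta)^{k+2} f, f\rangle$ by examining the difference of the two operators. Writing $X = -\Delta$, one has
$$(1-\Delta)^{k+2} - (1-\Delta)^k \Delta^2 = (1-\Delta)^k\big[(1-\Delta)^2 - \Delta^2\big] = (1-\Delta)^k(1 - 2\Delta) = (1+X)^k(1+2X),$$
which is a polynomial in $X = -\Delta$ with nonnegative real coefficients. By the positivity established in the proof of the preceding theorem — namely that $\langle p(-\Delta)f, f\rangle \ge 0$ for any polynomial $p$ with nonnegative coefficients, which was reduced there to $\langle (-\Delta)^n f, f\rangle \ge 0$ — it follows that
$$|f|_{k+2}^2 - |\Delta f|_k^2 = \langle (1-\Delta)^k(1-2\Delta) f, f\rangle \ge 0.$$
Taking square roots yields $|\Delta f|_k \le |f|_{k+2}$, and hence $\Delta$ is continuous (indeed norm-decreasing) as a map from the $H^{k+2}$ topology to the $H^k$ topology.

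There is no serious obstacle here: the argument is a short algebraic manipulation resting on the spectral positivity of $-\Delta$. The only points demanding any care are the justification that $\Delta$ may be moved across the pairing — which is exactly the symmetry under integration by parts valid on compactly supported smooth functions — and the observation that $(1+X)^k(1+2X)$ has nonnegative coefficients, so that the quoted positivity result applies verbatim. Since $C_c^\infty(\Gamma\backslash\h)$ is dense in each Sobolev space, the bounded operator extends by continuity to all of $H^{k+2}$, though for the statement as phrased the estimate on test functions already suffices.
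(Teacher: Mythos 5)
Your proof is correct and follows essentially the same route as the paper's: write $|\Delta f|_k^2 = \langle \Delta^2(1-\Delta)^k f, f\rangle$ by symmetry of $\Delta$ on test functions, and observe that $(1-\Delta)^{k+2} - \Delta^2(1-\Delta)^k = (1-\Delta)^k(1-2\Delta)$ is a polynomial in $-\Delta$ with non-negative coefficients, hence non-negative against $f$ by the positivity lemma from the preceding theorem. Your factorization is in fact the cleaner rendering of the paper's computation, which contains two slips — it adds $\langle (2(-\Delta)+1)f, f\rangle$ where the correct correction term is $\langle (1+2(-\Delta))(1+(-\Delta))^k f, f\rangle$, and it labels the final quantity $|f|_{k+1}^2$ rather than $|f|_{k+2}^2$ — neither of which affects the validity of the underlying argument.
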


\begin{proof} Using the latter negativity property of the previous proof, we have 
$$|\Delta f|^2_{k}=  \langle (1-\Delta)^{k}(\Delta f), (\Delta f)\rangle=  \langle (-\Delta)^2 (1+(-\Delta))^{k} f, f\rangle$$
$$\leq \langle (-\Delta)^2 (1+(-\Delta))^{k} f, f\rangle +\langle (2 (-\Delta)+1) f, f\rangle =  \langle (1+(-\Delta))^{k+2} f, f\rangle = |f|_{k+1}^2$$
\end{proof}

\begin{cor} $\Delta$ extends by continuity from test functions to a continuous linear map
$\Delta:H^{k+2}(\Gamma\backslash \mathfrak{H})\to H^k(\Gamma\backslash \mathfrak{H})$ for each $0\leq k\in\mathbb{Z}$.\end{cor}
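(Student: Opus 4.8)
The plan is to invoke the standard extension-by-density principle (the bounded linear transformation theorem): a continuous linear map from a dense subspace of a normed space into a complete space extends uniquely to the whole space. The preceding theorem does exactly the work of verifying the hypothesis, since it shows that $\Delta$, regarded as a map from $C_c^{\infty}(\Gamma\backslash\h)$ with the $H^{k+2}$-norm to $C_c^{\infty}(\Gamma\backslash\h)$ with the $H^k$-norm, is continuous, with the explicit bound $|\Delta f|_k\leq |f|_{k+2}$ for all $f\in C_c^{\infty}(\Gamma\backslash\h)$.

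First I would record the two structural facts that make the extension possible. By definition, $H^{k+2}(\Gamma\backslash\h)$ is the completion of $C_c^{\infty}(\Gamma\backslash\h)$ with respect to $|\cdot|_{k+2}$, so $C_c^{\infty}(\Gamma\backslash\h)$ sits as a dense subspace of $H^{k+2}(\Gamma\backslash\h)$; likewise $H^k(\Gamma\backslash\h)$ is complete, being itself a completion. Given any $u\in H^{k+2}(\Gamma\backslash\h)$, choose a sequence $(f_n)$ in $C_c^{\infty}(\Gamma\backslash\h)$ with $f_n\to u$ in $|\cdot|_{k+2}$. Then $(f_n)$ is $|\cdot|_{k+2}$-Cauchy, and the bound $|\Delta f_n-\Delta f_m|_k=|\Delta(f_n-f_m)|_k\leq |f_n-f_m|_{k+2}$ shows that $(\Delta f_n)$ is $|\cdot|_k$-Cauchy, hence converges in $H^k(\Gamma\backslash\h)$. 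I would then define $\Delta u:=\lim_n \Delta f_n$.

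The routine verifications that remain are well-definedness (if $g_n\to u$ is another approximating sequence, interleaving $f_1,g_1,f_2,g_2,\dots$ produces a single $|\cdot|_{k+2}$-Cauchy sequence whose $\Delta$-image must converge, forcing the two candidate limits to agree), linearity (inherited from linearity of $\Delta$ on test functions together with continuity of the vector-space operations), and the continuity estimate $|\Delta u|_k\leq |u|_{k+2}$ (obtained by passing to the limit in $|\Delta f_n|_k\leq |f_n|_{k+2}$, using continuity of the norms). There is no genuine obstacle here: the entire analytic content was already isolated in the operator bound of the previous theorem, and the corollary is simply its functorial consequence. The only point demanding any care is confirming that the value of the extension is independent of the chosen approximating sequence, which the interleaving argument settles.
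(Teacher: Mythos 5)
Your proposal is correct and takes essentially the same route as the paper: both extend $\Delta$ by continuity from the dense subspace $C_c^{\infty}(\Gamma\backslash\mathfrak{H})\subset H^{k+2}(\Gamma\backslash\mathfrak{H})$, using the operator bound $|\Delta f|_k\leq|f|_{k+2}$ from the preceding theorem together with completeness of $H^k(\Gamma\backslash\mathfrak{H})$ to define $\Delta u$ as the limit of $\Delta f_n$ along an approximating sequence. The paper compresses the well-definedness, linearity, and limiting-estimate verifications into a single sentence, whereas you spell them out; the content is identical.
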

\begin{proof} For test functions $\{f_n\}$ forming a Cauchy sequence in the $H^{k+1}$ topology, the continuity on the respective topologies on test functions means that the extension-by-continuity definition $$\Delta(H^{k+2}\!\!-\!\!\lim_n f_n)=H^k\!\!-\!\!\lim_n\Delta f_n$$ is well-defined and given a continuous map in those topologies.
\end{proof}
\begin{cor} For $f\in H^k(\Gamma\backslash\h)$, then $\displaystyle\langle\Delta f , E_s\rangle_{L^2(\Gamma\backslash \mathfrak{H})}  = \lambda_s \cdot\langle f , E_s\rangle_{L^2(\Gamma\backslash \mathfrak{H})}$.
\end{cor}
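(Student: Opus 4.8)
The plan is to argue by density and continuity, taking Theorem \ref{specrel} as the base case on test functions. The point is that, although $E_s \notin L^2(\Gamma\backslash\h)$, on the critical line $\text{Re}(s) = 1/2$ the expansion $E_s = y^s + c_s y^{1-s} + R_s$ with $R_s$ rapidly decreasing shows $E_s = O(y^{1/2})$; this moderate growth places $E_s$ in a negative automorphic Sobolev space $H^{-m}(\Gamma\backslash\h)$ for $m$ sufficiently large, as in the theory of \cite{Garrett2011sob} and \cite{DeCelles2016}. Consequently the $L^2$-pairing $g \mapsto \langle g, E_s\rangle$, initially defined on $C_c^\infty(\Gamma\backslash\h)$ by the convergent integral, extends to a continuous linear functional on $H^k(\Gamma\backslash\h)$, and likewise on $H^{k-2}(\Gamma\backslash\h)$. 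I expect this continuity of the pairing against the non-square-integrable $E_s$ to be the main obstacle; the remainder of the argument is formal.

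Granting this, I would first observe that both sides of the claimed identity are continuous functionals of $f$ in the $H^k$ topology. On the right, $f \mapsto \lambda_s \langle f, E_s\rangle$ is continuous since $\lambda_s = s(s-1)$ is a fixed scalar and $\langle \cdot, E_s\rangle$ is continuous on $H^k$. On the left, the continuity of $\Delta : H^k(\Gamma\backslash\h) \to H^{k-2}(\Gamma\backslash\h)$ established in the corollary above, composed with the continuity of $\langle \cdot, E_s\rangle$ on $H^{k-2}(\Gamma\backslash\h)$, shows that $f \mapsto \langle \Delta f, E_s\rangle$ is continuous on $H^k(\Gamma\backslash\h)$.

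Finally I would invoke density: given $f \in H^k(\Gamma\backslash\h)$, choose $f_n \in C_c^\infty(\Gamma\backslash\h)$ with $f_n \to f$ in the $H^k$ norm, which is possible because $H^k(\Gamma\backslash\h)$ is by definition the completion of $C_c^\infty(\Gamma\backslash\h)$. Theorem \ref{specrel} gives $\langle \Delta f_n, E_s\rangle = \lambda_s \langle f_n, E_s\rangle$ for every $n$. Passing to the limit, the left side tends to $\langle \Delta f, E_s\rangle$ and the right side to $\lambda_s \langle f, E_s\rangle$ by the two continuity statements above, yielding $\langle \Delta f, E_s\rangle = \lambda_s \langle f, E_s\rangle$ for all $f \in H^k(\Gamma\backslash\h)$, as desired. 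For the indices to match one takes $k$ in the range where $E_s$ represents a functional on both $H^k(\Gamma\backslash\h)$ and $H^{k-2}(\Gamma\backslash\h)$; the same reasoning then extends the relation to all of $H^{-\infty}(\Gamma\backslash\h)$ by the isometric extension used in Theorem \ref{existencethm}.
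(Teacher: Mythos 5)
Your reduction to Theorem \ref{specrel} by density is the right instinct, and density-plus-continuity is also the paper's strategy, but the continuity input you rely on is not available, and this is a genuine gap. You claim that because $E_s=O(y^{1/2})$ on the critical line, $E_s$ lies in some negative automorphic Sobolev space $H^{-m}(\Gamma\backslash\h)$, so that for each \emph{fixed} $s$ the pairing $g\mapsto\langle g,E_s\rangle$ extends to a continuous functional on $H^k(\Gamma\backslash\h)$. This is false. The global automorphic Sobolev spaces are characterized spectrally: the spectral transform carries $H^m$ isometrically to a weighted $L^2$ space over the spectral parameters, and $H^{-m}$ is its dual, again a weighted $L^2$ space. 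Pairing against a fixed $E_{s_0}$ with $\text{Re}(s_0)=1/2$ is evaluation of the spectral transform at the single point $s_0$, i.e.\ a Dirac mass in the spectral parameter, and a Dirac mass lies in no weighted $L^2$ space; so $E_{s_0}\notin H^{-m}$ for any $m$, and $\langle\cdot,E_{s_0}\rangle$ admits no continuous extension to any $H^k$. Concretely, an element $f\in H^m$ can have constant term of the shape $y^{1/2}g(\log y)$ with $g\in H^m(\mathbb{R})$ but $g\notin L^1(\mathbb{R})$, and then $\int_{\Gamma\backslash\h}f\,\overline{E}_{s_0}\,\frac{dx\,dy}{y^2}$ diverges: membership in $H^m$ imposes no pointwise decay at the cusp beyond $L^2$-averaged decay, so the moderate growth of $E_s$ does not buy convergence. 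This is also why the paper places its solutions in $H^{-\infty}(\Gamma\backslash \mathfrak{H})\oplus\mathcal{E}(\Gamma\backslash \mathfrak{H})$, with the Eisenstein part split off as a separate direct summand rather than absorbed into $H^{-\infty}$.

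The paper's own proof avoids fixed-$s$ functionals entirely: it uses the fact that $f\mapsto\langle f,E_\bullet\rangle$, viewed as a map $L^2(\Gamma\backslash\h)\to L^2(1/2+i[0,\infty))$ (Plancherel for the continuous spectrum), is an isometry obtained by extension by continuity from test functions. The identity of Theorem \ref{specrel}, literal for test functions, then extends by continuity---but as an identity between elements of $L^2(1/2+i[0,\infty))$, i.e.\ for almost every $s$, with $\langle f,E_s\rangle$ meaning the extended transform rather than a convergent integral. Your argument can be repaired along exactly these lines: keep your sequence $f_n\to f$ in $H^k$ and your (correct) use of the continuity of $\Delta:H^k\to H^{k-2}$, but let the convergence of $\langle f_n,E_\cdot\rangle$ and $\langle\Delta f_n,E_\cdot\rangle$ take place in $L^2$ of the critical line via the isometry, not pointwise in $s$. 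The pointwise-for-each-$s$ form you assert is stronger than what the corollary can mean for general $f\in H^k$, since for such $f$ the left and right sides are only defined spectrally, almost everywhere in $s$.
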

\begin{proof} Because $\langle\cdot, E_s\rangle_{L^2(\Gamma\backslash \mathfrak{H})} : L^2(\Gamma\backslash \h) \to L^2(1/2+i[0,\infty))$ is an isometric isomorphism obtained by extension by continuity on test functions, the literal spectral integrals in Theorem \ref{specrel} extend by continuity to give the result.\end{proof}
The same argument can be given for each function in 
$$\Xi = \{\text{orthonormal basis of cuspforms}\}\cup \{1 \} \cup 1/2+i[0,\infty)$$ where the half-line parametrizes the Eisenstein series $E_{1/2+it}$.\\

%----------------------------------------------------------------------------------------------------------------------------------------------

\subsection{Vector-valued integrals}  
There is at least one technical point to address. We will need a bit of machinery introduced by Gelfand (1936) \cite{Gelfand1936} and Pettis (1938) \cite{Pettis1938}. Their construction produces integrals of continuous vector-valued functions with compact support.  These integrals are not {\it constructed} using limits, in contrast to Bochner integrals, but instead are {\it characterized} by the desired property that they commute with linear functionals.\\

Let $V$ be a complex topological vector space.  Let $f$ be a measurable $V$-valued function on a measure space $X$.  A {\it Gelfand-Pettis integral} of $f$ is a vector $I_f\in V$ so that 
$$\alpha(I_f)=\int_X\alpha\circ f$$ for all $\alpha\in V^*$. Assuming that it exists and is unique, the vector $I_f$ is denoted $I_f=\int_Xf$.\\

Uniqueness and linearity of the integral follow from the fact that $V^*$ separates points by Hahn-Banach.  Establishing the existence of Gelfand-Pettis integrals is more delicate. 

{\thm Let $X$ be a compact Hausdorff topological space with a finite positive regular Borel measure.  Let $V$ be a quasi-complete, locally convex topological vectorspace.  Then continuous compactly-supported $V$-values functions $f$ on $X$ have Gelfand-Pettis integrals.}\\

The importance of the characterization of the Gelfand-Pettis integral is exhibited in the following corollary.

{\cor Let $T:V\to W$ be a continuous linear map of locally convex quasi-complete topological vector spaces and $f$a continuous $V$-valued function on $X$. Then
$$T\left( \int_Xf\right)=\int_XT\circ f.$$}
\begin{proof} Since $W^*$ separates points, it suffices to show that 
$$\mu\left( T\left( \int_Xf\right)\right)=\mu\left( \int_X T\circ f\right).$$
Since $\mu\circ T\in V^*$, the characterization of Gelfand-Pettis integrals gives
$$\mu\left( T\left( \int_Xf\right)\right)=(\mu\circ T)\left( \int_Xf\right)=\int_X\mu(T\circ f)= \mu\left( \int_X T\circ f\right).$$
\end{proof}\vspace{.2cm}

%----------------------------------------------------------------------------------------------------------------------------------------------

\subsection{Holomorphic vector-valued functions}  

We will recall some basic facts about vector-values functions, most of which we will not prove here.  However, for proofs and further explanation see Grothendieck's \cite{Grothendieck} for the original or Rudin's \cite{Rudin}.\\

\tab Let $f$ be a function of an open set $\Omega\subset \mathbb{C}$ taking values in a quasi-complete, locally convex space $V$. We say $f$ is {\it weakly holomorphic} when $\mathbb{C}$-valued functions $\lambda\circ f$ are holomorphic for all $\lambda\in V^*$.\\
 
 \tab  Let $\text{Hol}(\Omega, N)$  be the topological vector space of holomorphic $N$-valued functions on a fixed open $\Omega$.\\

 {\thm For $V$ a locally convex quasi-complete topological vector space, weakly holomorphic $V$-valued functions $f$ are strongly holomorphic in the following senses.
 
 First the usual Cauchy-theory integral formulas apply:
 $$f(z)=\frac{1}{2\pi i}\int_{\gamma}\frac{f(\zeta)}{\zeta-z}\,d\zeta$$
 with $\gamma$ a closed path around $z$ having winding number 1.   Second, the function $f(z)$ is infinitely differentiable, in fact strongly analytic, that is, expressible as a convergent power series $\displaystyle\sum_{n\geq 0} c_n(z-z_o)^n$ with coefficients $c_n\in V$ given by Gelfand Pettis integrals echoing Cauchy's formulas:
 $\displaystyle c_n=\frac{f^{(n)}(z_o)}{n!}=\frac{1}{2\pi i}\int_{\gamma}\frac{f(\zeta)}{(\zeta-z)^{n+1}}\,d\zeta$}\\
 
\tab  In \cite{Rudin}, the proof also uses the fact that {\it weak} boundedness implies boundedness to first show that $f$ is continuous.  Then recapulation in the vector-valued context is viable.  .\\

\tab Now fix a non-empty open $\Omega\subset\mathbb{C}$.  Let $V$ be quasi-complete, locally convex, with topology given by seminorms $\{\nu\}$.  The space $\text{Hol}(\Omega, N)$ of holomorphic $v$-values functions on $\Omega$ has a natural topology given by seminorms $\mu_{\nu. K}(f) = \sup_{z\in K}\nu(f(z))$ for compacts $K\subset \Omega$ seminorms $\nu$ on $V$.

{\cor\label{hol} $\text{Hol}(\Omega, N)$ is locally convex, quasi-complete.}\\

%-------------------------------------------------------------------------------------------------------------------------------------------------
%-------------------------------------------------------------------------------------------------------------------------------------------------

\end{document}